\definecolor{bred}{rgb}{0.8,0,0}
\setlist[enumerate]{label={\upshape(\roman*)}}
\newcommand{\bZ}{{\bf Z}}
\newcommand{\bI}{\mbox{\bf I}}
\def\Ebb{\mathbb{E}}
\def \mx {\mathbf{x}}
\def \my {\mathbf{y}}
\def \mv {\mathbf{v}}
\def \mz {\mathbf{z}}
\newtheorem{theorem}{Theorem}[section]
\newtheorem{proposition}[theorem]{Proposition}
\newtheorem{lemma}[theorem]{Lemma}
\newtheorem{remark}[theorem]{Remark}
\begin{document}

\title[Global optimization via optimal transport]{Global optimization via Schr{\"o}dinger-F{\"o}llmer diffusion}

\author[Y. Dai]{Yin Dai}
\author[Y.L. Jiao]{Yuling Jiao}
\author[L.C. Kang]{Lican Kang}
\author[X.L. Lu]{Xiliang Lu}
\author[Z.J. Yang]{Jerry Zhijian Yang}

\address{School of Mathematics and Statistics, Wuhan University, Wuhan 430072, P.R. China}
\email{yindai@whu.edu.cn}

\address{School of Mathematics and Statistics, and
Hubei Key Laboratory of Computational Science, Wuhan University, Wuhan 430072, P.R. China}
\email{yulingjiaomath@whu.edu.cn}

\address{Center for Quantitative Medicine Duke-NUS Medical School, Singapore}
\email{kanglican@duke-nus.edu.sg}

\address{School of Mathematics and Statistics, and
Hubei Key Laboratory of Computational Science, Wuhan University, Wuhan 430072, P.R. China}
\email{xllv.math@whu.edu.cn}

\address{School of Mathematics and Statistics, and
Hubei Key Laboratory of Computational Science, Wuhan University, Wuhan 430072, P.R. China}
\email{zjyang.math@whu.edu.cn}

\date{}

\keywords{ Global optimization, Schr{\"o}dinger-F{\"o}llmer diffusion,  Stochastic approximation}

\begin{abstract}
We study the problem of finding global minimizers of $V(x):\mathbb{R}^d\rightarrow\mathbb{R}$ approximately  via sampling from a probability distribution $\mu_{\sigma}$ with  density $p_{\sigma}(x)=\dfrac{\exp(-V(x)/\sigma)}{\int_{\mathbb R^d}
\exp(-V(y)/\sigma) dy }$  with respect to the Lebesgue measure for  $\sigma \in (0,1]$ small enough.
 We analyze a sampler  based on the Euler-Maruyama  discretization of the Schr{\"o}dinger-F{\"o}llmer diffusion processes with stochastic approximation under appropriate assumptions on the step size $s$ and the potential $V$.
 We prove  that the output of the proposed sampler is an approximate global minimizer of $V(x)$ with high probability at cost of sampling $\mathcal{O}(d^{3})$ standard normal random variables.
 Numerical studies illustrate the effectiveness of the proposed method and its superiority to the Langevin method.
\end{abstract}
\maketitle

\section{Introduction}
In this paper we study a challenging problem of finding the global minimizers of a 
non-convex smooth
function $V: \mathbb R^d \rightarrow \mathbb R$.
Suppose $N:=\{x_1^*,\ldots, x_{\kappa}^*\} \subset B_R$ is the set of the global minima of $V$ with finite cardinality, i.e., 
\begin{align}\label{opt}
x_i^* \in \underset{x}{\mbox{argmin}}~ V(x), \quad \text{for any $i=1, \ldots, \kappa$,}
\end{align}
where $B_R$ denotes the ball  centered at origin  with radius $R>0$.
Precisely speaking,
 we have $ \mathscr{L} (V(x)<V(x_i^*)-\varepsilon)=0$ and $\mathscr{L} (V(x)<V(x_i^*)+ \varepsilon)>0$ for any $\varepsilon >0$, where   $\mathscr{L}$ denotes the Lebesgue measure on $\mathbb R^d$.
 Without loss of generality, we can assume $V(x)=\|x \|_2^2/2$ 
 outside the ball $B_R$
 without changing the global minimizers of $V$.
For any given $\sigma \in (0,1]$, we define a constant $C_{\sigma}$ and a probability density function $p_{\sigma}(x)$ on $\mathbb R^d$ as
$$p_{\sigma}(x) :=\frac{\exp(-V(x)/\sigma)}{C_{\sigma}}, \quad \mathrm{for} \; C_{\sigma} :=\int_{\mathbb R^d} \exp(-V(x)/\sigma)dx <\infty .$$
Let $\mu_{\sigma}$ be the probability distribution corresponding to the density function $p_{\sigma}$. If the function $V$ is twice continuously differentiable, we have that the measure $\mu_{\sigma}$ converges weakly to a probability measure with weights proportional to $\dfrac{\left(\det \nabla^2 V(x_i^*) \right)^{-\frac 1 2}}{\sum_{j=1}^\kappa \left(\det \nabla^2 V(x_j^*) \right) ^{-\frac 1 2}}$ at $x_i^*$ as $\sigma$ goes to 0, i.e.,
$$\underset{\sigma\rightarrow 0}{\lim}~\mu_{\sigma}=\dfrac{\sum_{i=1}^\kappa \left( \det \nabla^2 V(x_i^*) \right)^{-\frac 1 2}\delta_{x_i^*}}{\sum_{j=1}^\kappa \left(\det \nabla^2 V(x_j^*) \right) ^{-\frac 1 2}}.$$
Therefore, solving the optimization problem \eqref{opt} can be converted into sampling from the probability distribution measure $\mu_{\sigma}$ for sufficiently small   $\sigma$.

An efficient method 
sampling from
$\mu_{\sigma}$ is 
based on 
the overdamped Langevin stochastic differential equation (SDE) which is given by
\begin{equation}\label{Langevin}
  \mathrm{d} Z_t= -\nabla V(Z_t) \mathrm{d}t +\sqrt{2\sigma} \mathrm{d} B_t,  
\end{equation}
where $(B_t)_{t\ge 0}$ is a $d$-dimensional Brownian motion. 
Under some certain conditions,
Langevin SDE \eqref{Langevin} admits  the unique invariant measure $\mu_{\sigma}$ \citep{bakry2008rate}. 
Hence the Langevin sampler is generated by 
applying Euler-Maruyama discretization on 
this process to achieve the purpose of sampling from $\mu_{\sigma}$. 
Convergence properties of the Langevin sampler under the strongly convex potential assumption have been established by 
\cite{durmus2016sampling,dalalyan2017further,dalalyan2017theoretical,cheng2018convergence,durmus2016high-dimensional,dalalyan2019user-friendly}.
Moreover, the strongly convex potential assumption can be replaced by
different conditions to guarantee the log-Sobolev inequality for the target distribution,  including the dissipativity condition for the drift term \citep{raginsky2017non,zhang2019nonasymptotic,mou2019improved} and  the local convexity condition for the potential function outside a ball \citep{durmus2017nonasymptotic,ma2019sampling,bou2020coupling}.

An alternative to 
Langevin sampler
is the class of algorithms based on the Schr\"{o}dinger-F\"{o}llmer diffusion process \eqref{sde}. 
This process has been proposed for sampling and generative modelling \citep{tzen2019theoretical,sfsHuang,sfsJiao,wang2021deep}. Subsequently, \cite{ruzayqat2022unbiased} studied the problem of 
unbiased estimation of expectations based on the Schr\"{o}dinger-F\"{o}llmer diffusion process. \cite{vargas2021bayesian} applied this process to Bayesian inference in large datasets, 
and the related posterior is reached in finite time. 
\cite{zhang2021path} proposed a new Path Integral Sampler (PIS), a generic sampler that generates samples through simulating a target-dependent SDE which can be trained with free-form architecture network design. The PIS is built on Sch\"odinger-F\"ollmer diffusion process to reach the terminal distribution. Different from these existing works, the purpose of this paper is to solve the
non-convex smooth optimization problems.
To that end,
we need to rescale the Schr\"{o}dinger-F\"{o}llmer diffusion process to sample from the target distribution $\mu_{\sigma}$.

To be precise, the Schr\"{o}dinger-F\"{o}llmer diffusion process associated to $\mu_{\sigma}$ is defined as
\begin{align}\label{sde}
\mathrm{d} X_{t}= \sigma b\left(X_{t}, t\right) \mathrm{d} t+\sqrt{\sigma} \mathrm{d} B_{t}, \quad t \in[0,1],\quad X_0=0,
\end{align}
where the drift function is $$b(x,t)=\frac{\mathbb E_{Z\sim N\left(0,\bf{I}_d\right)}[\nabla f_{\sigma}(x+\sqrt{(1-t)\sigma}Z)]}{\mathbb E_{Z\sim N\left(0,\bf{I}_d\right)}[ f_{\sigma}(x+\sqrt{(1-t)\sigma} Z)]}:\mathbb{R}^d \times [0,1]\rightarrow \mathbb{R}^d$$
 with the density ratio $f_{\sigma}(\cdot)=\frac{p_{\sigma}(\cdot) }{\phi_{\sigma}(\cdot)}$ and $\phi_{\sigma}(\cdot)$ being the density function of a normal distribution $N(0,\sigma \mathbf{I}_d)$.  According to \cite{leonard2014survey} and \cite{eldan2020}, the process  $\{X_t\}_{t\in [0,1]}$  in \eqref{sde} was first formulated by F\"{o}llmer \citep{follmer1985,follmer1986,follmer1988} when studying the Schr\"{o}dinger bridge problem \citep{schrodinger1932theorie}.
The main feature of the above  Schr\"{o}dinger-F\"{o}llmer  diffusion process is that  it interpolates $\delta_{0}$ and  $\mu_{\sigma}$ in time $[0,1]$, i.e.,  $X_1 \sim \mu_{\sigma}$, see Proposition \ref{SBP}. Then we can solve the optimization problem \eqref{opt} by sampling from $\mu_{\sigma}$ via the following Euler-Maruyama discretization of \eqref{sde},
$$
Y_{t_{k+1}}=Y_{t_k}+ \sigma s b\left(Y_{t_k}, t_k\right)+\sqrt{\sigma s}\epsilon_{k+1},
~
Y_{t_0} = 0,~ k=0,1,\ldots, K-1,
$$
where $s = 1/K$ is the  step size, $t_k =k s$,  and $\{\epsilon_{k}\}_{k=1}^{K}$ are independent and identically distributed from $N(0,\bI_{d})$.
If the expectations in the drift term $b(x,t)$ do not have  analytical forms, one can use Monte Carlo method to evaluate $b\left(Y_{t_k}, t_k\right)$ approximately, i.e., one can sample from $\mu_{\sigma}$ according to
 $$
\widetilde{Y}_{t_{k+1}}=\widetilde{Y}_{t_k}+ \sigma s\tilde{b}_m\left(\widetilde{Y}_{t_k}, t_k\right)+\sqrt{\sigma s}\epsilon_{k+1},
~\widetilde{Y}_{t_0} = 0, ~k=0,1,\ldots, K-1,$$
where $\tilde{b}_m(\widetilde{Y}_{t_{k}},t_{k}):=
\dfrac{\frac{1}{m}\sum_{j=1}^m[\nabla  f_{\sigma}(\widetilde{Y}_{t_{k}}+\sqrt{(1-t_{k})\sigma}Z_j)]}{\frac{1}{m}\sum_{j=1}^{m} [ f_{\sigma}(\widetilde{Y}_{t_{k}}+\sqrt{(1-t_{k})\sigma} Z_j)]}$ with $Z_1,...,Z_m$ i.i.d. $N(0,\bI_d)$.

The main result of this paper is summarized in the following.
\begin{theorem} (Informal)
Under   condition  $(\textbf{A})$, 
$\forall \ \  0 < \delta \ll 1$,  with probability at least $1-\sqrt{\delta}$,  $\widetilde{Y}_{t_K} $ is a $\tau$-global minimizer of $V$, i.e.,
$V(\widetilde{Y}_{t_K})\leq \tau + \inf V(x)$, 
if number of iterations $K\geq \mathcal{O}\left(\frac{d^2}{\delta}\right)$, number of Gaussian samples per iteration  $m \geq \mathcal{O}\left(\frac{d}{\delta}\right)$ and $\sigma \leq \mathcal{O} \left(\frac{\tau}{\log(1/\delta)}\right)$. 
\end{theorem}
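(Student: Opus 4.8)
The plan is to bound the probability that $\widetilde{Y}_{t_K}$ fails to be a $\tau$-minimizer by splitting it into (I) a statistical term — how well $\mu_\sigma$ itself concentrates near the set $N$ of global minima — and (II) a numerical term — the distance between the output $\widetilde{Y}_{t_K}$ and a genuine draw from $\mu_\sigma$. Concretely, realize $X_1$ (the terminal value of \eqref{sde}, so that $X_1\sim\mu_\sigma$ by Proposition \ref{SBP}), the exact Euler--Maruyama iterate $Y_{t_K}$, and the Monte Carlo iterate $\widetilde{Y}_{t_K}$ on one probability space by coupling through the shared increments $\{\epsilon_k\}$ (the batches $\{Z_j\}$ being the additional randomness of the Monte Carlo step), and write, with $V^*:=\inf_x V$,
\[
\mathbb P\bigl(V(\widetilde{Y}_{t_K})>\tau+V^*\bigr)\le \mathbb P_{\mu_\sigma}\bigl(V>\tfrac{\tau}{2}+V^*\bigr)+\mathbb P\bigl(|V(\widetilde{Y}_{t_K})-V(X_1)|>\tfrac{\tau}{2}\bigr).
\]
Since $V$ equals $\|x\|_2^2/2$ outside $B_R$ and is $C^2$ inside, $V$ is Lipschitz on every ball; as $\mu_\sigma$ puts almost all mass in $B_R$, on the event $\{\|X_1-\widetilde{Y}_{t_K}\|\le\eta\}$ one has $|V(\widetilde{Y}_{t_K})-V(X_1)|\le L_\eta\,\eta$, so the second term is bounded by $\mathbb P(\|X_1-\widetilde{Y}_{t_K}\|>\eta)$ for a suitable $\eta=\eta(\tau,R)$ plus a negligible tail. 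The task thus reduces to (I) a Laplace-type estimate for $\mu_\sigma$ and (II) an $L^2$ bound on $\|X_1-\widetilde{Y}_{t_K}\|$.

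For term (I): shift so that $V^*=0$ and write $\mathbb P_{\mu_\sigma}(V>\tfrac\tau2)=C_\sigma^{-1}\int_{\{V>\tau/2\}}e^{-V/\sigma}\,dx$. Splitting the exponent as $V/\sigma=\tfrac{V}{2\sigma}+\tfrac{V}{2\sigma}\ge\tfrac{V}{2\sigma}+\tfrac{\tau}{4\sigma}$ on $\{V>\tau/2\}$ bounds the numerator by $e^{-\tau/(4\sigma)}\int_{\mathbb R^d}e^{-V/(2\sigma)}\,dx$, while a Gaussian lower bound for $C_\sigma$ around any minimizer $x_i^*$ with $\nabla^2 V(x_i^*)$ positive definite (part of condition $(\textbf{A})$) gives $C_\sigma\gtrsim(2\pi\sigma)^{d/2}(\det\nabla^2V(x_i^*))^{-1/2}$. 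The two Gaussian volumes differ only by a factor $2^{d/2}$, so $\mathbb P_{\mu_\sigma}(V>\tfrac\tau2)\le e^{-\tau/(4\sigma)}e^{cd}$ for an explicit constant $c$; this is $\le\delta$ exactly when $\sigma\le\mathcal{O}\bigl(\tau/(\log(1/\delta)+d)\bigr)$, which is the stated condition on $\sigma$ up to the additive dimension term (absorbable into the constants, or into a slightly smaller choice of $\sigma$).

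For term (II): first record the regularity of the drift $b$ in \eqref{sde} that follows from $(\textbf{A})$ — a uniform bound $\sup_{x,t}\|b(x,t)\|\le\beta$, a spatial Lipschitz bound $\|b(x,t)-b(y,t)\|\le L_x\|x-y\|$, and a $1/2$-H\"older-in-time bound — obtained by differentiating the Gaussian-smoothed numerator and denominator defining $b$ and invoking Stein's identity $\mathbb E[\nabla f_\sigma(x+\sqrt{(1-t)\sigma}Z)]=\tfrac{1}{\sqrt{(1-t)\sigma}}\mathbb E[f_\sigma(x+\sqrt{(1-t)\sigma}Z)\,Z]$ to absorb the apparent singularity as $t\to1$, all with $\text{poly}(d)$ constants. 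Given these, the one-step Euler--Maruyama error combined with a discrete Gr\"onwall argument gives $\mathbb E\|X_1-Y_{t_K}\|^2\lesssim\text{poly}(d)/K$. For the Monte Carlo step, conditionally on $\widetilde{Y}_{t_k}$ the estimator $\widetilde{b}_m$ is a self-normalized average whose numerator and denominator concentrate around their means at rate $1/\sqrt m$ and whose denominator $\tfrac1m\sum_j f_\sigma(\cdot)$ stays bounded below except on an event of probability $\mathcal{O}(\text{poly}(d)/m)$, so $\mathbb E\|\widetilde{b}_m(\widetilde{Y}_{t_k},t_k)-b(\widetilde{Y}_{t_k},t_k)\|^2\lesssim\text{poly}(d)/m$; propagating this through the $K$ steps with another discrete Gr\"onwall estimate yields $\mathbb E\|Y_{t_K}-\widetilde{Y}_{t_K}\|^2\lesssim\text{poly}(d)/m$. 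Hence $\mathbb E\|X_1-\widetilde{Y}_{t_K}\|^2\lesssim\text{poly}(d)\,(1/K+1/m)$; by Jensen the first moment is $\lesssim\sqrt{\text{poly}(d)}\,(K^{-1/2}+m^{-1/2})$, which under $K\gtrsim d^2/\delta$ and $m\gtrsim d/\delta$ is $\lesssim\eta\sqrt\delta$, so Markov's inequality gives $\mathbb P(\|X_1-\widetilde{Y}_{t_K}\|>\eta)\lesssim\sqrt\delta$ — this is the origin of the square root in the theorem. Adding the $\le\delta$ from term (I) makes the failure probability $\lesssim\sqrt\delta$, i.e.\ success with probability at least $1-\sqrt\delta$ after tuning constants.

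I expect the main obstacle to be the drift regularity in step (II): showing that $b(x,t)$ and, above all, its spatial Lipschitz constant remain polynomially bounded in $d$ uniformly over $t\in[0,1)$ despite the shrinking bandwidth $\sqrt{(1-t)\sigma}$, which forces one to work with the integration-by-parts representation of $b$ and with moment bounds for the tilted measures $\propto f_\sigma(x+\sqrt{(1-t)\sigma}z)\,e^{-\|z\|_2^2/2}\,dz$; this step also pins down the exact power of $d$, hence the $\mathcal{O}(d^2)$ iteration count. The secondary difficulty is the self-normalized nature of $\widetilde{b}_m$: the ratio is only stable on the event that the empirical denominator is not too small, so one must quantify that event and control the contribution of its complement to the second-moment bound.
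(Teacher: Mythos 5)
Your outline is correct in spirit and arrives at the same parameter dependencies, but it follows a genuinely different route from the paper, and your route carries an extra technical burden that the paper's avoids. The paper decomposes in total-variation distance, not Wasserstein: it writes
\[
\mathbb P(V(\widetilde{Y}_{t_K})>\tau) \le \mathbb P(V(X_{1})>\tau)+\|\mathrm{Law}(X_{1})-\mathrm{Law}(Y_{t_K})\|_{TV}+\|\mathrm{Law}(Y_{t_K})-\mathrm{Law}(\widetilde{Y}_{t_K})\|_{TV},
\]
bounds each TV distance by $\sqrt{2\,\mathbb D_{\mathrm{KL}}}$ via Pinsker, and then controls the two KL divergences by Girsanov's theorem applied to continuous-time interpolations of the Euler and Monte--Carlo schemes (Lemma~\ref{lemma 3.2}), yielding $\mathbb D_{\mathrm{KL}}\lesssim d^2 s$ and $\mathbb D_{\mathrm{KL}}\lesssim d\sigma/m$. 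The $\sqrt{\delta}$ in the paper therefore comes from the square root in Pinsker's inequality; in your argument it comes from applying Markov's inequality to a first moment (itself the square root of a second moment via Jensen), which is a different mechanism that happens to produce the same rate.

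The substantive advantage of the paper's TV decomposition, which you should be aware of, is that it compares the \emph{laws} of the endpoints on the indicator set $\{V>\tau\}$ directly, so no regularity of $V$ is ever invoked. Your Wasserstein-type coupling requires converting a bound on $\|X_1-\widetilde{Y}_{t_K}\|$ into a bound on $|V(X_1)-V(\widetilde{Y}_{t_K})|$, and $V$ is not globally Lipschitz: it equals $\|x\|_2^2/2$ outside $B_R$, so its gradient grows linearly. You acknowledge this and propose localizing to a ball and treating the escape event as a negligible tail, but this creates a genuine tension: the moment bounds of Lemma~\ref{lemma2}/\ref{lemma5} give $\mathbb E\|X_1\|_2^2,\,\mathbb E\|\widetilde Y_{t_K}\|_2^2=\mathcal O(d)$ (times $\sigma$-dependent constants), so to make the escape probability $o(1)$ you must take the localization radius $R'\gtrsim\sqrt{d}$, which inflates the Lipschitz constant $L_{R'}$ and hence the threshold $\eta$ you need. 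The bookkeeping closes at the informal level of the theorem, but it is a real extra step, and it is precisely why the paper proves Theorem~\ref{th2} (the probability bound) by the KL/TV route and Theorem~\ref{th3} (the $W_2$ bound, which is essentially your step (II)) separately rather than deriving one from the other.

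Two smaller points. First, your Laplace estimate for $\mu_\sigma$ is a valid variant of the paper's but weaker: the paper's proof chooses a large $R^\star$ with $\min_{\|y\|=R^\star}V(y)>\tau$, splits $\{V\ge\tau\}$ into $B_{R^\star}$ and its complement, and uses the quadratic growth of $V$ outside $B_R$ to get the sharper $\exp(-(\tau-\varepsilon)/\sigma)$ with $\varepsilon$ arbitrarily small, rather than your $\exp(-\tau/(4\sigma))$. Second, your observation that the $\sigma$ condition really reads $\sigma\le\mathcal O(\tau/(\log(1/\delta)+d))$ with a hidden dimension term is accurate, and the paper has the same hidden term inside $C_{\tau,\varepsilon,d}=2\,\mathrm{Vol}(B_{R^\star})/\mathrm{Vol}(B_r)$, which is exponential in $d$; both statements are honest only at the informal level.
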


The rest of this paper is organized as follows. In Section \ref{method},  we  give the motivation and details of approximation method, i.e., Algorithm \ref{alg:1}.
In Section \ref{Theorey}, we present the theoretical analysis for the proposed method. 
In Section \ref{simulation}, a numerical example is given to validate the efficiency of the method.
We conclude the manuscript in Section \ref{conlusion}. Proofs for all the propositions and theorems are provided in Appendix \ref{append}.

\section{Methodology Description}\label{method}

In this section we first provide some background on the Schr{\"o}dinger-F{\"o}llmer diffusion. Then we propose Algorithm \ref{alg:1} to solve the minimization problem \eqref{opt} based on the Euler-Maruyama discretization scheme of the Schr{\"o}dinger-F{\"o}llmer diffusion.

\subsection{Background on Schr{\"o}dinger-F{\"o}llmer diffusion}
We first recall the Schr{\"o}dinger bridge problem, then introduce the Schr{\"o}dinger-F{\"o}llmer diffusion.

\subsubsection{Schr{\"o}dinger bridge problem}
Let $\Omega = C([0,1],\mathbb{R}^d)$ be the space of $\mathbb{R}^d$-valued continuous functions on the time interval $[0, 1]$. Denote $Z = (Z_t)_{t\in [0,1]}$ as the canonical process on $\Omega$, where $Z_t(\omega) = \omega_t$, $\omega = (\omega_s)_{s\in [0,1]}\in \Omega$.
The canonical $\sigma$-field on $\Omega$ is then generated as $\mathscr{F}  = \sigma(Z_t,t\in[0,1]) = \left\{\{\omega:(Z_t(\omega))_{t\in [0,1]}\in H\}:H\in\mathcal{B}(\mathbb{R}^d)\right\}$, where $\mathcal B(\mathbb R^d)$ denotes the Borel $\sigma$-algebra of $\mathbb R^d$. Denote $\mathcal{P}(\Omega)$ as the space of probability measures on the path space $\Omega$, and $\mathbf{W}^{\mx}_{\sigma} \in\mathcal{P}(\Omega)$ as the Wiener measure with variance $\sigma$ whose initial marginal is $\delta_{\mx}$. The law of the reversible Brownian motion is then defined as $\mathbf{P}_{\sigma}= \int \mathbf{W}_{\sigma}^{\mx}\mathrm{d}\mx$, which is an unbounded measure on $\Omega$. One can observe that $\mathbf{P}_{\sigma}$ has a marginal distribution coinciding with the Lebesgue measure $\mathscr{L}$ at each $t$. \cite{schrodinger1932theorie} studied the problem of finding the most likely random evolution between two probability distributions $\widetilde{\nu}, \widetilde{\mu} \in \mathcal{P}(\mathbb{R}^d)$.
This problem is referred to as the Schr\"{o}dinger bridge problem (SBP).
SBP can be further formulated as seeking a probability law on the path space that interpolates between $\widetilde{\nu}$ and $\widetilde{\mu}$, such that the probability law is close to the prior law of the Brownian diffusion with respect to the relative entropy \citep{jamison1975markov,leonard2014survey}, i.e.,  finding a path measure $\mathbf{Q}^* \in \mathcal{P}(\Omega)$ with marginal $\mathbf{Q}^*_{t} = (Z_t)_{\#}\mathbf{Q}^*=\mathbf{Q}^*\circ Z_t^{-1}, t\in [0,1]$ such that
$$\mathbf{Q}^* \in \arg \min \mathbb{D}_{\mathrm{KL}}(\mathbf{Q}||\mathbf{P}_{\sigma}),
\;\textrm{with}\;\mathbf{Q}_0 = \widetilde{\nu}, \qquad \mathbf{Q}_1 = \widetilde{\mu},$$ 
where the relative entropy is defined by $\mathbb{D}_{\mathrm{KL}}(\mathbf{Q}||\mathbf{P}_{\sigma}) = \int \log\left(\dfrac{d \mathbf{Q}}{d \mathbf{P}_{\sigma}}\right) d \mathbf{Q} $ if $\mathbf{Q}\ll \mathbf{P}_{\sigma}$ (i.e., $\mathbf{Q}$ is absolutely continuous w.r.t. $\mathbf{P}_{\sigma}$), and $\mathbb{D}_{\mathrm{KL}}(\mathbf{Q}||\mathbf{P}_{\sigma}) =+\infty$ otherwise.
The following theorem characterizes the solution of SBP.

\begin{theorem}[\cite{leonard2014survey}]\label{th01}
If measures $\widetilde{\nu}, \widetilde{\mu} \ll \mathscr{L}$,  then SBP admits a unique solution $d\mathbf{Q}^* = f^*(Z_0)g^*(Z_1)d\mathbf{P}_{\sigma}$, where
$f^*$ and $g^*$ are $\mathscr{L}$-measurable non-negative  functions satisfying the  Schr\"{o}dinger system
$$\left\{\begin{array}{l}
f^*(\mx) \mathbb{E}_{\mathbf{P}_{\sigma}}\left[g^*\left(Z_{1}\right) \mid Z_{0}=\mx\right]= \dfrac{d \widetilde{\nu}}{d\mathscr{L}}(\mx), \quad \mathscr{L}\text{-a.e.}, \\
g^*(\my)  \mathbb{E}_{\mathbf{P}_{\sigma}}\left[f^{*}\left(Z_{0}\right) \mid Z_{1}=\my\right]=\dfrac{d \widetilde{\mu}}{d\mathscr{L}}(\my),  \quad \mathscr{L}\text{-a.e.}.
\end{array}\right.$$
Furthermore, the pair $(\mathbf{Q}^*_{t},\mv^*_{t})$ with $$\mv^*_{t}(\mx) = \nabla_{\mx} \log \mathbb{E}_{\mathbf{P}_{\sigma}}\left[g^{*}\left(Z_{1}\right) \mid Z_{t}= \mx\right]$$ solves the minimum action problem
$$\min_{\mu_t,\mv_t} \int_{0}^{1} \mathbb{E}_{\mz\sim \mu_t}[\|\mv_t(\mz)\|^2] \mathrm{d} t$$ such that
$$\left\{\begin{array}{l}
\partial_{t}\mu_t = -\nabla \cdot(\mu_t \mv_t) +  \frac{\sigma \Delta \mu_t}{2}, \quad \text{on } (0,1) \times \mathbb{R}^{d},\\
\mu_{0}=\widetilde{\nu}, \mu_{1}=\widetilde{\mu}.
\end{array}\right.
$$
\end{theorem}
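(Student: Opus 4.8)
The statement is the classical characterization of the Schr\"odinger bridge (it is quoted from \cite{leonard2014survey}), so what follows is a reconstruction of its proof. The plan is to pass from the dynamic problem over path measures to the \emph{static} Schr\"odinger problem over couplings of the two endpoint marginals, solve that by convex duality, and then lift the static minimizer back to path space and read off the velocity field.

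\textbf{Step 1 (reduction to a static problem).} Disintegrate both $\mathbf{P}_{\sigma}$ and any competitor $\mathbf{Q}$ along the endpoint pair $(Z_0,Z_1)$ and use the additivity of relative entropy to write
$$\mathbb{D}_{\mathrm{KL}}(\mathbf{Q}\|\mathbf{P}_{\sigma}) = \mathbb{D}_{\mathrm{KL}}(\mathbf{Q}_{01}\|\mathbf{R}) + \int_{\R^d\times\R^d} \mathbb{D}_{\mathrm{KL}}\!\left(\mathbf{Q}^{\mx,\my}\,\middle\|\,\mathbf{P}_{\sigma}^{\mx,\my}\right)\, \mathbf{Q}_{01}(d\mx,d\my),$$
where $\mathbf{Q}_{01}$ is the joint law of $(Z_0,Z_1)$ under $\mathbf{Q}$, $\mathbf{R} := (Z_0,Z_1)_{\#}\mathbf{P}_{\sigma}$ has density $\mathbf{R}(d\mx,d\my) = (2\pi\sigma)^{-d/2}e^{-\|\mx-\my\|^2/2\sigma}\,d\mx\,d\my$, and $\mathbf{Q}^{\mx,\my},\mathbf{P}_{\sigma}^{\mx,\my}$ are the respective bridges. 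The second term is nonnegative and vanishes exactly when $\mathbf{Q}$ has the same (Brownian) bridges as $\mathbf{P}_{\sigma}$, so minimizing over $\mathbf{Q}$ with $\mathbf{Q}_0=\widetilde{\nu}$, $\mathbf{Q}_1=\widetilde{\mu}$ reduces to minimizing $\mathbb{D}_{\mathrm{KL}}(\pi\|\mathbf{R})$ over couplings $\pi$ of $(\widetilde{\nu},\widetilde{\mu})$, and then setting $\mathbf{Q}^* := \int \mathbf{P}_{\sigma}^{\mx,\my}\,\pi^*(d\mx,d\my)$.

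\textbf{Step 2 (existence, uniqueness and product structure of $\pi^*$).} The set of couplings with marginals $\widetilde{\nu},\widetilde{\mu}$ is convex and tight; $\pi\mapsto\mathbb{D}_{\mathrm{KL}}(\pi\|\mathbf{R})$ is strictly convex and lower semicontinuous, and under $\widetilde{\nu},\widetilde{\mu}\ll\mathscr{L}$ one checks it is finite somewhere on the feasible set, so a unique minimizer $\pi^*$ exists. For the structure, introduce the Fenchel--Rockafellar dual: maximize $\int\varphi\,d\widetilde{\nu}+\int\psi\,d\widetilde{\mu}-\int(e^{\varphi(\mx)+\psi(\my)}-1)\,d\mathbf{R}$ over potentials $(\varphi,\psi)$. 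Strong duality and the first-order optimality condition force $d\pi^*/d\mathbf{R}(\mx,\my)=e^{\varphi^*(\mx)+\psi^*(\my)}$; writing $f^*=e^{\varphi^*}$, $g^*=e^{\psi^*}$ and marginalizing $\pi^*$ gives precisely the Schr\"odinger system, since $\int g^*(\my)\,\mathbf{R}(\mx,d\my)$ equals $\mathbb{E}_{\mathbf{P}_{\sigma}}[g^*(Z_1)\mid Z_0=\mx]$ times $\mathscr{L}(d\mx)$ (and symmetrically in the other variable), and the identities hold $\mathscr{L}$-a.e. because $\widetilde{\nu},\widetilde{\mu}\ll\mathscr{L}$.

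\textbf{Step 3 (lift to path space and identify the velocity field).} By construction $d\mathbf{Q}^*/d\mathbf{P}_{\sigma}$ is $\sigma(Z_0,Z_1)$-measurable and equals $d\pi^*/d\mathbf{R}(Z_0,Z_1)=f^*(Z_0)g^*(Z_1)$, which is the claimed formula. This $h$-transform of the reversible Brownian motion is again a Markov diffusion; by Girsanov its generator is $\tfrac{\sigma}{2}\Delta+\sigma\nabla\log h_t\cdot\nabla$ with $h_t(\mx):=\mathbb{E}_{\mathbf{P}_{\sigma}}[g^*(Z_1)\mid Z_t=\mx]$, so $\mu_t:=\mathbf{Q}^*_t$ solves the stated Fokker--Planck equation with $\mv^*_t=\nabla_{\mx}\log\mathbb{E}_{\mathbf{P}_{\sigma}}[g^*(Z_1)\mid Z_t=\mx]$. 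Finally, the Girsanov/Benamou--Brenier identity $\mathbb{D}_{\mathrm{KL}}(\mathbf{Q}\|\mathbf{P}_{\sigma})=\mathbb{D}_{\mathrm{KL}}(\mathbf{Q}_0\|\mathscr{L})+\tfrac{1}{2\sigma}\int_0^1\Ebb_{\mathbf{Q}}\big[\|\beta_t\|^2\big]\,dt$, valid for a drift $\beta_t$ of $\mathbf{Q}$ relative to $\mathbf{P}_{\sigma}$, shows that among curves $(\mu_t)$ with the prescribed endpoints solving the continuity--diffusion equation, the one induced by $\mathbf{Q}^*$ minimizes $\int_0^1\Ebb_{\mz\sim\mu_t}[\|\mv_t(\mz)\|^2]\,dt$, i.e.\ the minimum action problem.

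\textbf{Main obstacle.} The entropy additivity of Step 1 and the Girsanov computation of Step 3 are routine; the crux is Step 2 --- establishing strong duality for the static entropic problem and, above all, showing the Schr\"odinger system holds $\mathscr{L}$-almost everywhere with genuinely measurable, a.e.-finite $f^*,g^*$ (a priori the conditional expectation $\mathbb{E}_{\mathbf{P}_{\sigma}}[g^*(Z_1)\mid Z_0=\cdot]$ could vanish or blow up on a non-null set). This is exactly where the hypothesis $\widetilde{\nu},\widetilde{\mu}\ll\mathscr{L}$ (together with mild integrability in the non-compact setting) must be used, and it is the only place where care beyond soft convex analysis is needed.
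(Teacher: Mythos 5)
The paper does not prove this theorem at all: it is quoted from L\'eonard's survey \cite{leonard2014survey} and used as a black box, and the appendix explicitly lists only Propositions~\ref{SBP}, \ref{prop1} and Theorems~\ref{th1}, \ref{th2}, \ref{th3} as the results it proves. So there is no paper proof to compare against. Your reconstruction follows the standard route from the Schr\"odinger-bridge literature (and in fact L\'eonard's own exposition): disintegrate the relative entropy along the endpoint pair to reduce to the static entropic-coupling problem, solve the static problem by convex duality to obtain the product form $f^*\otimes g^*$ and the Schr\"odinger system, then lift the optimal coupling back to path space through Brownian bridges and an $h$-transform to read off the Markov drift and the Benamou--Brenier/Girsanov action identity. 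You also correctly isolate where the real work lies, namely existence and $\mathscr{L}$-a.e.\ finiteness of the Schr\"odinger potentials on the non-compact state space.

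One unflagged inaccuracy deserves mention. The dual functional you propose, $\int\varphi\,d\widetilde{\nu}+\int\psi\,d\widetilde{\mu}-\int\bigl(e^{\varphi(\mx)+\psi(\my)}-1\bigr)\,d\mathbf{R}$, is ill-defined in this setting: $\mathbf{R}=(Z_0,Z_1)_{\#}\mathbf{P}_{\sigma}$ is an \emph{unbounded} measure (both its marginals are Lebesgue), so $\int 1\,d\mathbf{R}=\infty$ and the ``$-1$'' device for making the exponential dual coercive is unavailable. In the $\sigma$-finite case one must either carry the normalization $\pi(\R^{2d})=1$ as a separate constraint with its own multiplier, work modulo additive shifts of $(\varphi,\psi)$, or run an exhaustion argument on bounded sets; this is precisely where the hypothesis $\widetilde{\nu},\widetilde{\mu}\ll\mathscr{L}$ and finite-entropy feasibility are consumed. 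So the ``Main obstacle'' you name is indeed the genuine difficulty, but it is compounded by the fact that your Step~2 dual has to be restated before the Fenchel--Rockafellar argument can even be invoked. As a sketch of the standard argument the proposal is sound; as a complete proof it has a real gap at exactly the step you flag.
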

Let  $K_{\sigma}(s, \mx, t, \my) = [2\pi\sigma(t-s)]^{-d/2}\exp\left(-\dfrac{\|\mx - \my\|^2}{2\sigma(t-s)}\right)$ be the transition density of the Wiener process with variance $\sigma$, $\widetilde{q}(\mx)$ and $\widetilde{p}(\my)$ be
the density of $\widetilde{\nu}$ and $\widetilde{\mu}$, respectively.  Denote by
$$f_{0}(\mx) = f^*(\mx), \ \ g_{1}(\my) = g^*(\my),$$
$${f_{1}}(\my) = \mathbb{E}_{\mathbf{P}_{\sigma}}\left[f^{*}\left(Z_{0}\right) \mid Z_{1}=\my\right] = \int K_{\sigma}(0, \mx, 1, \my)f_{0}(\mx) d \mx,$$
$${g_{0}}(\mx)= \mathbb{E}_{\mathbf{P}_{\sigma}}\left[g^*\left(Z_{1}\right) \mid Z_{0}=\mx\right] = \int K_{\sigma}(0, \mx, 1, \my)g_{1}(\my) d \my.$$
Then the  Schr\"{o}dinger system  in Theorem \ref{th01} can also be characterized by
\begin{equation*}
\widetilde{q}(\mx) = f_0(\mx) {g_{0}}(\mx), \ \  \widetilde{p}(\my)=  {f_{1}}(\my)g_1(\my),
\end{equation*}
with the following forward and backward time harmonic equations  \citep{chen2020stochastic},
 $$\left\{\begin{array}{l}
\partial_t f_t(\mx) = \frac{\sigma \Delta}{2} f_t(\mx),  \\
\partial_t g_t(\mx) = -\frac{\sigma \Delta}{2} g_t(\mx),
\end{array}\right. \quad \text { on }(0,1) \times \mathbb{R}^{d}.
$$

Let $q_t$ denote marginal density of $\mathbf{Q}_t^{*}$, i.e., $q_t(\mx) = \frac{d\mathbf{Q}_t^{*}}{d\mathscr{L}}(\mx)$,  then it can be represented by the product of  $g_t$  and $f_t$ \citep{chen2020stochastic}. Let  $\mathcal{V}$ consist of admissible Markov controls with finite energy. Then,  the vector field
\begin{equation}\label{drift}
\begin{aligned}
\mv^*_{t}=\sigma \nabla_{\mx}\log g_t(\mx)
=\sigma \nabla_{\mx}\log  \int K_{\sigma}(t, \mx, 1, \my)g_1(\my) \mathrm{d} \my
\end{aligned}
\end{equation}
solves the following stochastic control problem.
\begin{theorem}[\cite{dai1991stochastic}]\label{th02}
$$\mathbf{v}^*_{t}(\mx)\in \arg\min_{\mathbf{v} \in \mathcal{V}}\mathbb{E}\left[\int_0^1\frac{1}{2 \sigma}\|\mathbf{v}_t\|^2\mathrm{d}t\right]$$
such that
\begin{equation}\label{sdeb}
\left\{\begin{array}{l}
\mathrm{d}\mx_t = \mathbf{v}_t \mathrm{d}t+ \sqrt{\sigma} \mathrm{d} B_t, \\
\mx_0\sim \widetilde{q}(\mx),\quad \mx_1\sim \widetilde{p}(\mx).
\end{array}\right.
\end{equation}
\end{theorem}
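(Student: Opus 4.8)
The plan is to reduce this stochastic control problem to the Schr\"odinger bridge problem already solved in Theorem~\ref{th01}, by using a Girsanov change of measure to identify the running cost $\int_0^1\frac{1}{2\sigma}\|\mathbf{v}_t\|^2\mathrm{d}t$ with the relative entropy $\mathbb{D}_{\mathrm{KL}}(\mathbf{Q}\|\mathbf{P}_\sigma)$ up to a constant. \emph{Step 1 (Girsanov decomposition).} Fix any admissible Markov control $\mathbf{v}\in\mathcal{V}$ whose diffusion $\mathrm{d}\mx_t=\mathbf{v}_t\,\mathrm{d}t+\sqrt{\sigma}\,\mathrm{d}B_t$ is feasible, i.e.\ $\mx_0\sim\widetilde{q}$ and $\mx_1\sim\widetilde{p}$, and let $\mathbf{Q}\in\mathcal{P}(\Omega)$ be its path law, so $\mathbf{Q}_0=\widetilde{\nu}$, $\mathbf{Q}_1=\widetilde{\mu}$. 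Since $\mathbf{P}_\sigma$ is an unbounded measure, I would first disintegrate $\mathbf{P}_\sigma=\int\mathbf{W}^{\mx}_\sigma\,\mathrm{d}\mx$ over the initial coordinate and apply Girsanov's theorem on each genuine probability measure $\mathbf{W}^{\mx}_\sigma$; because $\mathbf{v}$ has finite energy this yields $\mathbf{Q}\ll\mathbf{P}_\sigma$ with
\begin{equation*}
\log\frac{\mathrm{d}\mathbf{Q}}{\mathrm{d}\mathbf{P}_\sigma}
=\log\widetilde{q}(Z_0)+\frac{1}{2\sigma}\int_0^1\|\mathbf{v}_t(Z_t)\|^2\,\mathrm{d}t
+\frac{1}{\sqrt{\sigma}}\int_0^1\mathbf{v}_t(Z_t)\cdot\mathrm{d}B_t .
\end{equation*}
Taking $\mathbb{E}_{\mathbf{Q}}$, and using that $\int_0^1\mathbf{v}_t\cdot\mathrm{d}B_t$ is a true mean-zero martingale under $\mathbf{Q}$ (an $L^2$/localization argument, which is exactly what the finite-energy qualifier in the definition of $\mathcal{V}$ provides), gives
\begin{equation*}
\mathbb{D}_{\mathrm{KL}}(\mathbf{Q}\|\mathbf{P}_\sigma)
=\mathbb{D}_{\mathrm{KL}}(\widetilde{\nu}\|\mathscr{L})
+\mathbb{E}\!\left[\int_0^1\frac{1}{2\sigma}\|\mathbf{v}_t\|^2\,\mathrm{d}t\right].
\end{equation*}
The first term depends only on the fixed initial law $\widetilde{\nu}$ and is therefore constant over the feasible set.

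\emph{Step 2 (transfer of the minimizer).} By Step~1, minimizing the action over feasible Markov controls is equivalent to minimizing $\mathbb{D}_{\mathrm{KL}}(\mathbf{Q}\|\mathbf{P}_\sigma)$ over $\mathbf{Q}\in\mathcal{P}(\Omega)$ with $\mathbf{Q}_0=\widetilde{\nu}$, $\mathbf{Q}_1=\widetilde{\mu}$. Theorem~\ref{th01} identifies the unique solution of the latter as $\mathrm{d}\mathbf{Q}^*=f^*(Z_0)g^*(Z_1)\mathrm{d}\mathbf{P}_\sigma$, which is a Doob $h$-transform of the Wiener measure; a standard $h$-transform computation (using $g_t(\mx)=\int K_\sigma(t,\mx,1,\my)g_1(\my)\mathrm{d}\my$ and $\partial_t g_t=-\tfrac{\sigma}{2}\Delta g_t$) shows its drift is the Markov control $\mathbf{v}^*_t(\mx)=\sigma\nabla_{\mx}\log g_t(\mx)$ of \eqref{drift}. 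Since $\mathbb{D}_{\mathrm{KL}}(\mathbf{Q}^*\|\mathbf{P}_\sigma)<\infty$, the same identity shows $\mathbf{v}^*$ has finite energy, hence $\mathbf{v}^*\in\mathcal{V}$, while the Schr\"odinger system of Theorem~\ref{th01} guarantees that the $\mathbf{v}^*$-diffusion has the prescribed marginals $\widetilde{q}$ at $t=0$ and $\widetilde{p}$ at $t=1$, i.e.\ \eqref{sdeb} holds. Thus $\mathbf{v}^*$ is feasible and attains the minimal action, which is the assertion; the harmless normalization $\tfrac{1}{2\sigma}$ does not affect the identity of the minimizer and matches the minimum-action statement closing Theorem~\ref{th01}.

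\emph{Main obstacle and alternative.} The only nonroutine parts are all in Step~1: (a) making the Girsanov computation rigorous against the unbounded reference $\mathbf{P}_\sigma$, handled by disintegrating over $Z_0$ and working on the probability measures $\mathbf{W}^{\mx}_\sigma$; (b) upgrading $\int_0^1\mathbf{v}_t\cdot\mathrm{d}B_t$ from a local to a genuine martingale under $\mathbf{Q}$ so that its expectation vanishes, which is precisely where the finite-energy hypothesis in $\mathcal{V}$ is used; and (c) confirming that L\'eonard's abstract optimizer $\mathbf{Q}^*$ is representable as a Markov, finite-energy controlled diffusion of the stated form, which follows from the $h$-transform drift formula $\sigma\nabla\log g_t$. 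A more self-contained route that bypasses Theorem~\ref{th01} is a dynamic-programming argument: relax the terminal-distribution constraint with a multiplier $\lambda=-\log g_1$, write the HJB equation $\partial_t u-\tfrac{\sigma}{2}\|\nabla u\|^2+\tfrac{\sigma}{2}\Delta u=0$ with terminal data $u(\cdot,1)=\lambda$ for the value function $u$, solve it by the Cole--Hopf substitution $u=-\log g_t$ (which linearizes it to the backward heat equation $\partial_t g_t+\tfrac{\sigma}{2}\Delta g_t=0$), read off the optimal feedback $\mathbf{v}^*_t=-\sigma\nabla u=\sigma\nabla\log g_t$ from the pointwise minimization in the Hamiltonian, and finish with a verification theorem together with weak duality to show that the relaxed optimum is feasible for the original constrained problem.
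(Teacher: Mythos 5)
The paper does not include a proof of Theorem~\ref{th02}; it is stated as a citation of \cite{dai1991stochastic} (with the entropic reformulation following \cite{leonard2014survey}), so there is no in-paper argument to compare against. Your reconstruction is nonetheless correct and standard. Disintegrating the unbounded reference $\mathbf{P}_\sigma$ over the initial coordinate and applying Girsanov on each $\mathbf{W}^{\mx}_\sigma$ gives exactly the log-density formula you write; the finite-energy hypothesis in $\mathcal{V}$ is precisely what makes $\int_0^1\mathbf{v}_t\cdot \mathrm{d}B_t$ a genuine $L^2$ martingale under $\mathbf{Q}$, so taking $\mathbb{E}_{\mathbf{Q}}$ kills the stochastic-integral term and yields $\mathbb{D}_{\mathrm{KL}}(\mathbf{Q}\|\mathbf{P}_\sigma)=\mathbb{D}_{\mathrm{KL}}(\widetilde\nu\|\mathscr{L})+\mathbb{E}\bigl[\int_0^1\tfrac{1}{2\sigma}\|\mathbf{v}_t\|^2\,\mathrm{d}t\bigr]$, the first term being fixed by the initial-marginal constraint. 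Transferring the optimum then works because the Schr\"odinger-bridge minimizer $\mathbf{Q}^*$ of Theorem~\ref{th01} is a Markov $h$-transform diffusion with drift $\sigma\nabla\log g_t$ of finite energy, so the unconstrained optimizer over all feasible path laws lies inside the feasible control class and must also be its constrained optimizer. The HJB/Cole--Hopf alternative you sketch is in fact closer to Dai Pra's original verification-theorem argument, while your primary entropy-reduction route is closer to L\'eonard's; both are acceptable, and you correctly isolate the only nontrivial technical points (Girsanov against an unbounded reference, $L^2$-martingality from finite energy, and the Markovianity and finite energy of $\mathbf{Q}^*$). The one thing worth stating explicitly is that $\mathbb{D}_{\mathrm{KL}}(\widetilde\nu\|\mathscr{L})=\int\widetilde q\log\widetilde q$ must be finite for the decomposition to permit the cancellation you invoke.
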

According to Theorem \ref{th02}, the dynamics determined by the SDE in \eqref{sdeb} with a time-varying drift term $\mathbf{v}^*_{t}$ in \eqref{drift} will drive the particles sampled from the initial distribution $\widetilde{\nu}$ to evolve to the particles drawn from the target distribution $\widetilde{\mu}$ on the unit time interval. This nice property is what we need in designing samplers: we can sample from the underlying target distribution $\widetilde{\mu}$ via pushing forward a simple reference distribution $\widetilde{\nu}$.
In particular, if we take the initial distribution $\widetilde{\nu}$ to be $\delta_0$, the degenerate distribution at $0$, then the Schr\"{o}dinger-F\"{o}llmer diffusion process \eqref{sch-equation} defined below is a solution to \eqref{sdeb}, i.e., it
will transport $\delta_0$ to the target distribution.

\subsubsection{Schr\"{o}dinger-F\"{o}llmer diffusion process}
From now on, without loss of generality, we can assume that the minimum value of $V$ is 0, i.e., $V(x_i^*)=0,i=1,\ldots,\kappa$, otherwise, we consider $V$ replaced by $V-\min_{x} V(x)$.
Since $\mu_{\sigma}$ is absolutely continuous with respect to the $d$-dimensional Gaussian distribution $N(0,\sigma \bI_d)$, then we denote the Radon-Nikodym derivative of $\mu_{\sigma}$ with respect to $N(0, \sigma \bI_d)$ as follows: 
\begin{equation}\label{drb}
 f_{\sigma}(x) :=\frac{d\mu_{\sigma}}{dN(0,\sigma \bI_d)}(x)=\frac{p_{\sigma}(x)}{\phi_{\sigma}(x)},\ x \in \mathbb{R}^d.
\end{equation}
Let $Q_t$ be the heat semigroup defined by
\begin{equation*}
Q_{t} f_{\sigma}(x):= \Ebb_{Z \sim N(0,\bI_d)}[f_{\sigma}(x+\sqrt{t\sigma} Z)], \quad t \in [0, 1].
\end{equation*}
The Schr\"{o}dinger-F\"{o}llmer diffusion process $\{X_t\}_{t\in [0,1]}$ \citep{follmer1985, follmer1986, follmer1988}
is defined as 
\begin{align}\label{sch-equation}
\mathrm{d} X_{t}=\sigma b(X_{t}, t) \mathrm{d}t+\sqrt{\sigma} \mathrm{d} B_{t},  \ X_{0}=0,  \  t \in [0,1],
\end{align}
where $b(x,t): \mathbb R^d \times [0,1] \rightarrow \mathbb R^d$ is the drift term given by
\begin{equation*}
b(x, t)=\nabla \log Q_{1-t} f_{\sigma}(x).
\end{equation*}
This process $\{X_t\}_{t\in [0,1]}$ defined in \eqref{sch-equation} is a solution to \eqref{sdeb} with
 $\tilde{\nu} = \delta_{0}$, $\tilde{\mu} = \mu_{\sigma}$, and $\mathbf{v}_{t}(x) = b(x,t)$ \citep{dai1991stochastic, lehec2013representation, eldan2020}. Let 
  \begin{equation*}
  \hat f_{\sigma}(x) :=C_{\sigma} (2\pi \sigma)^{-\frac d 2} f_{\sigma}(x).
  \end{equation*}
  Since the drift term $b(x,t)$ is scale-invariant with respect to $f_{\sigma}$ in the sense that
$b(x, t)=\nabla \log Q_{1-t} Cf_{\sigma}(x)$ for any $C>0$. Therefore, the Schr{\"o}dinger-F\"{o}llmer diffusion can be used for sampling from an unnormalized distribution $\mu_{\sigma}$,  that is,  the normalizing constant $C_{\sigma}$ of  $\mu_{\sigma}$ does not need to be known.
We have $\mu_{\sigma}(dx)=\exp (-V(x)/\sigma) d x / C_{\sigma}$ with the normalized constant $C_{\sigma}$, then $f_{\sigma}(x)=\frac{\left(\sqrt{2\pi\sigma}\right)^d}
{C_{\sigma}}\exp (-V(x)/\sigma+\frac{\|x\|_2^2}{2 \sigma})$
and 
$\hat f_{\sigma}(x)=\exp\left(-\frac{V(x)}{\sigma} +\frac{\|x \|^2_2}{2\sigma} \right)$. 

To ensure that the SDE \eqref{sch-equation} admits a unique strong solution, we assume that
 \begin{itemize}
\item[$(\textbf{A})$] 
$V(x)$ is twice continuous differentiable on $
\mathbb{R}^d$  and  $V(x)=\|x \|_2^2/2$ 
 outside a  ball $B_R$.
\end{itemize}
As we  mentioned earlier, we can make  Assumption $\mathbf{(A)}$ by smoothing  and  it does not change the  the global minimizers of $V$.
Under Assumption $\mathbf{(A)}$, we have the following two properties which further imply  the SDE \eqref{sch-equation} admits a unique strong solution.
\begin{itemize}
\item[$(\textbf{P1})$] For each $\sigma \in (0,1]$, $\hat f_{\sigma},\nabla \hat f_{\sigma}$ are Lipschitz continuous with a constant $\gamma_{\sigma} >0$,
where 
\begin{align*}
 \gamma_{\sigma}:&= \max _{\|x\|_2 \leq R}\exp \left(-\frac{V(x)}{\sigma} + \frac{\|x\|_2^{2}}{2\sigma}\right)  \left \lbrace \left\|\frac{x}{\sigma} -\frac{\nabla V(x)}{\sigma} \right\|_2^2+ \left\|\frac{\bI_d}{\sigma}- \frac{\nabla^2 V(x)}{\sigma} \right\|_2 \right\rbrace \\
 &= \left \lbrace \left(M_{2,R}/\sigma \right)^2 + M_{3,R}/ \sigma \right \rbrace \exp\left(M_{1,R}/\sigma \right),
\end{align*}
and
\begin{equation}\label{eqn:M1-M3}
M_{1,R} := \max_{\|x \|_2 \le R} \left\lbrace -V(x)+ \frac{\|x \|^2_2}{2} \right \rbrace,\; M_{2,R} := \max_{\|x \|_2 \le R} \left\| x -\nabla V(x) \right\|_2, \;
M_{3,R} := \max_{\|x \|_2 \le R} \left\| \mathbf{I}_d - \nabla^2 V(x) \right\|_2.
\end{equation}
\item[$(\textbf{P2})$] For each $\sigma \in (0,1]$, there exists a constant $\xi_{\sigma} >0$ such that $\hat f_{\sigma} \geq \xi_{\sigma}$, 
where 
\begin{equation}\label{eqn:m1}
\xi_{\sigma} :=\exp\left(m_{1,R}/\sigma \right)\; \mbox{with} \; 
 m_{1,R} :=\min_{\|x \|_2 \le R} \left\lbrace -V(x)+\frac{\|x \|_2^2}{2} \right \rbrace.
\end{equation}
\end{itemize}
Properties  (\textbf{P1})-(\textbf{P2}) are
 shown   in Section \ref{vc12} in  Appendix. We should mention here  (\textbf{P1})-(\textbf{P2}) are 
used as assumptions  in \cite{lehec2013representation,tzen2019theoretical}.


Thanks to  (\textbf{P1}) and (\textbf{P2}), some calculations show that  $$\|\nabla \hat f_{\sigma} \|_2\leq \gamma_{\sigma}, \quad \|\nabla^2 \hat f_{\sigma} \|_2\leq \gamma_{\sigma},$$ 
$$
\sup_{x\in\mathbb{R}^d,t\in[0,1]}\|\nabla Q_{1-t} \hat f_{\sigma}(x)\|_2\leq \gamma_{\sigma}, \quad \sup_{x\in\mathbb{R}^d,t\in[0,1]}\|\nabla^2(Q_{1-t} \hat f_{\sigma}(x))\|_2\leq \gamma_{\sigma},
$$ and
$$
b(x,t)=\frac{\nabla Q_{1-t} \hat f_{\sigma}(x)}{Q_{1-t} \hat f_{\sigma}(x)}, ~\nabla b(x,t)=\frac{\nabla^2 (Q_{1-t} \hat f_{\sigma})(x)}{Q_{1-t} \hat f_{\sigma}(x)}-b(x,t)b(x,t)^{\top}.
$$
We conclude that
$$
\sup_{x\in\mathbb{R}^d,t\in[0,1]}\|b(x,t)\|_2\leq \gamma_{\sigma}/\xi_{\sigma},
\sup_{x\in\mathbb{R}^d,t\in[0,1]}\|\nabla b(x,t)\|_2\leq \gamma_{\sigma} /\xi_{\sigma} +\gamma^2_{\sigma}/\xi_{\sigma}^2.
$$
Furthermore, we can also easily deduce that the drift term $b$ satisfies a linear growth condition and a Lipschitz continuity condition \citep{revuz2013continuous,pavliotis2014stochastic}, that is,
\begin{align}\label{cond1}
\|b(x,t)\|_2^2\leq C_{0,\sigma} (1+\|x\|_2^2) \tag{C1}, \ x \in \mathbb{R}^d, t \in [0,1]
\end{align}
and
\begin{align}\label{cond2}
\|b(x,t)-b(y,t)\|_2\leq C_{1,\sigma} \|x-y\|_2 \tag{C2}, \ x, y \in \mathbb{R}^d, t \in [0,1],
\end{align}
where $C_{0,\sigma}$ and $C_{1,\sigma}$ are two finite positive constants  that only depend on $\sigma$.
The linear growth condition \eqref{cond1} and  Lipschitz continuity condition \eqref{cond2} ensure the existence of the unique strong solution of
Schr{\"o}dinger-F\"{o}llmer SDE  \eqref{sch-equation}.
We summarize the above discussion in the following Proposition, whose proof are shown in Appendix \ref{append}. 

\begin{proposition}\label{SBP}
Under  assumption $(\textbf{A})$
the Schr{\"o}dinger-F\"{o}llmer
SDE  \eqref{sch-equation}
has a unique strong solution $\{X_t\}_{t\in[0,1]}$ with $X_0 \sim \delta_0$ and  $X_{1} \sim \mu_{\sigma}$.
\end{proposition}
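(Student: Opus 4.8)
The plan is to separate the statement into two parts: (i) that \eqref{sch-equation} possesses a unique strong solution on $[0,1]$, and (ii) that this solution has $X_0\sim\delta_0$ and $X_1\sim\mu_\sigma$. Part (i) is essentially already in hand: in the discussion preceding the proposition the drift $b$ was shown to obey the linear growth estimate \eqref{cond1} and the global Lipschitz estimate \eqref{cond2}, and the diffusion coefficient $\sqrt\sigma$ is constant; hence the classical It\^o theory for SDEs (see, e.g., \cite{revuz2013continuous,pavliotis2014stochastic}) yields pathwise uniqueness and a unique strong solution $\{X_t\}_{t\in[0,1]}$, with $X_0=0$ by construction. All the real content is in part (ii), the identification of the terminal law.

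For (ii) I would exploit the $h$-transform / Girsanov structure of the F\"ollmer drift. Write $u(x,t):=Q_{1-t}\hat f_\sigma(x)=\Ebb_{Z\sim N(0,\bI_d)}[\hat f_\sigma(x+\sqrt{(1-t)\sigma}\,Z)]$, so that $b(x,t)=\nabla\log u(x,t)=\nabla u(x,t)/u(x,t)$. Two observations do the work. First, $\hat f_\sigma$ is globally bounded: it equals $1$ on $\mathbb{R}^d\setminus B_R$ because $V(x)=\|x\|_2^2/2$ there, and it is continuous hence bounded on the compact ball $B_R$; moreover $\hat f_\sigma\ge\xi_\sigma>0$ by $(\textbf{P2})$. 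Consequently $u$ is bounded above and below by the same two constants, and the bounded drift $b$ (recall $\|b\|_2\le\gamma_\sigma/\xi_\sigma$) trivially satisfies Novikov's condition. Second, $u$ solves the backward heat equation $\partial_t u+\tfrac{\sigma}{2}\Delta u=0$ on $(0,1)\times\mathbb{R}^d$. Let $W_t:=\sqrt\sigma\,B_t$ denote the variance-$\sigma$ Brownian motion started at the origin, with law $\mathbf P$ on path space. It\^o's formula together with the backward heat equation then gives $\mathrm{d}u(W_t,t)=\sqrt\sigma\,\nabla u(W_t,t)\cdot\mathrm{d}B_t=u(W_t,t)\,\sqrt\sigma\,b(W_t,t)\cdot\mathrm{d}B_t$, so the normalized process $D_t:=u(W_t,t)/u(0,0)$ is exactly the stochastic exponential $\mathcal{E}\!\big(\int_0^\cdot\sqrt\sigma\,b(W_s,s)\cdot\mathrm{d}B_s\big)_t$, a bounded positive martingale.

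Next I would define the probability measure $\mathbf Q$ on path space by $\mathrm{d}\mathbf Q/\mathrm{d}\mathbf P=D_1=\hat f_\sigma(W_1)/u(0,0)$. A short computation evaluates the constant $u(0,0)=Q_1\hat f_\sigma(0)=\Ebb_{W\sim N(0,\sigma\bI_d)}[\hat f_\sigma(W)]=C_\sigma(2\pi\sigma)^{-d/2}$, where the last step uses $\hat f_\sigma=C_\sigma(2\pi\sigma)^{-d/2}f_\sigma$, $f_\sigma=p_\sigma/\phi_\sigma$ and $\int f_\sigma\,\mathrm{d}N(0,\sigma\bI_d)=\int p_\sigma=1$. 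Then for any bounded measurable $\varphi$ one gets $\Ebb_{\mathbf Q}[\varphi(W_1)]=\int\varphi(x)\hat f_\sigma(x)\phi_\sigma(x)/u(0,0)\,\mathrm{d}x=\int\varphi(x)p_\sigma(x)\,\mathrm{d}x$, i.e. $W_1\sim\mu_\sigma$ under $\mathbf Q$. Finally, Girsanov's theorem applied to the density process $(D_t)_{t\in[0,1]}$ shows that $\tilde B_t:=B_t-\int_0^t\sqrt\sigma\,b(W_s,s)\,\mathrm{d}s$ is a $\mathbf Q$-Brownian motion, whence under $\mathbf Q$ the canonical process obeys $\mathrm{d}W_t=\sigma b(W_t,t)\,\mathrm{d}t+\sqrt\sigma\,\mathrm{d}\tilde B_t$ with $W_0=0$; that is, the law of $(W_t)$ under $\mathbf Q$ is a weak solution of \eqref{sch-equation}. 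Since pathwise uniqueness was established in part (i), weak solutions are unique in law (Yamada--Watanabe), so $X\stackrel{d}{=}(W_t)_{t\in[0,1]}$ under $\mathbf Q$; in particular $X_0\sim\delta_0$ and $X_1\sim\mu_\sigma$, which finishes the proof. The one step that needs care is the legitimacy of the change of measure --- that $(D_t)$ is a genuine (uniformly integrable) martingale --- but this is immediate from the two-sided bounds on $\hat f_\sigma$, equivalently from the boundedness of $b$; everything else is bookkeeping. One could instead shortcut part (ii) by invoking the F\"ollmer / L\'eonard / Eldan--Lehec representation already cited in the text, but the Girsanov argument above is self-contained.
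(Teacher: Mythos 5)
Your proof is correct, and for the heart of the proposition --- identifying the law of $X_1$ --- you take a genuinely different route from the paper. The paper's proof quotes from \cite{dai1991stochastic,lehec2013representation} the explicit transition density $p_{s,t}(x,y)=\widetilde p_{s,t}(x,y)\,Q_{1-t}f_\sigma(y)/Q_{1-s}f_\sigma(x)$ of the F\"{o}llmer process and then verifies $\mathbb P(X_1\in A)=\int_A\widetilde p_{0,1}(0,y)\,Q_0 f_\sigma(y)/Q_1 f_\sigma(0)\,dy=\mu_\sigma(A)$ by a two-line calculation. You instead reconstruct the $h$-transform underlying that formula: using the backward heat equation for $u=Q_{1-t}\hat f_\sigma$ and It\^o's formula, you identify $D_t=u(W_t,t)/u(0,0)$ as the Dol\'eans exponential of $\int_0^\cdot\sqrt\sigma\,b(W_s,s)\cdot dB_s$ (a true martingale, since $b$ is bounded by $\gamma_\sigma/\xi_\sigma$), change measure to $\mathbf Q$ with $d\mathbf Q/d\mathbf P=D_1=\hat f_\sigma(W_1)/u(0,0)$, compute $u(0,0)=C_\sigma(2\pi\sigma)^{-d/2}$ to read off $W_1\sim\mu_\sigma$ under $\mathbf Q$, and then use Girsanov plus Yamada--Watanabe to transfer this to the strong solution. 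Your argument is more self-contained (it derives rather than cites the transition density) and makes the Doob $h$-transform structure explicit; the paper's is shorter but opaque without the external reference. One small bookkeeping remark: the paper folds the derivation of \eqref{cond1}--\eqref{cond2} from $(\textbf{P1})$--$(\textbf{P2})$ into the proof of this very proposition, whereas you treat it as already available from the preceding discussion --- harmless, since the estimates are stated there, but worth noting when aligning your write-up with the appendix.
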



\subsection{Euler-Maruyama discretization for Schr{\"o}dinger-F{\"o}llmer Diffusion}\label{discretization}
Proposition   \ref{SBP} shows  that the Schr{\"o}dinger-F\"{o}llmer diffusion will transport $\delta_0$ to the probability distribution measure $\mu_{\sigma}$ on the unite time interval. Because the drift term $b(x,t)$ is scale-invariant with respect to $f_{\sigma}$ in the sense that $b(x, t)=\nabla \log Q_{1-t} Cf_{\sigma}(x), \forall C>0$, the Schr{\"o}dinger-F\"{o}llmer diffusion can be used for sampling from $\mu_{\sigma}(dx)=\exp (-V(x)/\sigma) d x/C_{\sigma}$,  where  the normalizing constant  $C_{\sigma}$ may not be known. To this end, we use the  Euler-Maruyama method to discretize  the  Schr{\"o}dinger-F\"{o}llmer diffusion \eqref{sch-equation}.
Let $$t_{k}=k\cdot s, \ \ ~k=0,1,  \ldots, K, \ \ ~\mbox{with}~ \ \ s=1 / K, \ \ Y_{t_{0}}=0,$$
the Euler-Maruyama discretization scheme reads
\begin{align}\label{emd}
Y_{t_{k+1}}=Y_{t_k}+\sigma s b\left(Y_{t_{k}}, t_{k}\right)+\sqrt{\sigma s}\epsilon_{k+1},
~
k=0,1,\ldots, K-1,
\end{align}
where   $\{\epsilon_{k}\}_{k=1}^{K}$ are i.i.d. $N(0,\bI_d)$ and
{
\begin{align}\label{driftb}
b(Y_{t_{k}},t_{k})=\frac{\Ebb_{Z}[\nabla \hat f_{\sigma}(Y_{t_{k}}+\sqrt{(1-t_{k})\sigma}Z)]}{\Ebb_{Z}[\hat f_{\sigma}(Y_{t_{k}}+\sqrt{(1-t_{k})\sigma} Z)]}=\frac{\Ebb_{Z}[Z \hat f_{\sigma}(Y_{t_{k}}+\sqrt{(1-t_{k})\sigma}Z)]}{\Ebb_{Z}[\hat f_{\sigma}(Y_{t_{k}}+\sqrt{(1-t_{k})\sigma} Z)]\sqrt{(1-t_k)\sigma}},
\end{align}
}
where the second equality follows from Stein's lemma
\citep{stein1972, stein1986,landsman2008stein}.
From the definition of $b(Y_{t_{k}},t_{k})$  in \eqref{driftb}  we may not get its explicit expression.
We then consider a estimator $\tilde{b}_{m}$ of $b$ by replacing $\Ebb_{Z}$ in the drift term $b$ with $m$-samples mean, i.e.,
\begin{align}\label{drifte1}
\tilde{b}_m(Y_{t_{k}},t_{k})=
\frac{\frac{1}{m}\sum_{j=1}^m[\nabla \hat f_{\sigma}(Y_{t_{k}}+\sqrt{(1-t_{k})\sigma}Z_j)]}{\frac{1}{m}\sum_{j=1}^{m} [\hat f_{\sigma}(Y_{t_{k}}+\sqrt{(1-t_{k})\sigma} Z_j)]}, \ k=0, \ldots, K-1,
\end{align}
or
\begin{align}\label{drifte2}
\tilde{b}_m(Y_{t_{k}},t_{k})=
\frac{\frac{1}{m}\sum_{j=1}^m[Z_j \hat f_{\sigma}(Y_{t_{k}}+\sqrt{(1-t_{k})\sigma}Z_j)]}{\frac{1}{m}\sum_{j=1}^{m} [\hat f_{\sigma}(Y_{t_{k}}+\sqrt{(1-t_{k})\sigma} Z_j)]\cdot\sqrt{(1-t_k)\sigma} }, \ k=0, \ldots, K-1,
\end{align}
where $Z_1, \ldots, Z_{m}$ are i.i.d. $N(0, \bI_d)$. The detailed description of the proposed method is summarized in the following Algorithm \ref{alg:1} below.
\begin{algorithm}[H]
	\caption{Solving \eqref{opt} via Euler-Maruyama discretization of  Schr{\"o}dinger-F{\"o}llmer Diffusion}
    \label{alg:1}
	\begin{algorithmic}[1]
\STATE Input: $\sigma$,  $m$, $K$.  Initialize $s=1/K$, $\widetilde{Y}_{t_0}=0$.
\FOR{$k= 0,1,\ldots, K-1$ }
\STATE Sample $\epsilon_{k+1}\sim N(0,\bI_d)$.
\STATE Sample $Z_i, i=1,\ldots,m$, from $ N(0,\bI_d)$.
\STATE Compute $\tilde{b}_{m}$ according to \eqref{drifte1} or \eqref{drifte2},
\STATE $
\widetilde{Y}_{t_{k+1}}=\widetilde{Y}_{t_{k}} + \sigma s \tilde{b}_{m}\left(\widetilde{Y}_{t_{k}}, t_{k}\right) + \sqrt{\sigma s} \epsilon_{k+1}$.
\ENDFOR
\STATE Output:  $\{\widetilde{Y}_{t_k}\}_{k=1}^{K}$.
\end{algorithmic}
\end{algorithm}
In the next section, we establish   the probability bound of $\widetilde{Y}_{t_K}$ being a $\tau$-global minimizer (Theorem \ref{th2}),  and derive the bound in the  Wasserstein-2 distance between  the  law of  $\widetilde{Y}_{t_K}$  generated from Algorithm \ref{alg:1} and the probability distribution measure $\mu_{\sigma}$ under some certain conditions (Theorem \ref{th3}).
\section{Theoretical Property}\label{Theorey}
In this section, we  show that the Gibbs measure $\mu_{\sigma}$ weakly converges to a multidimensional distribution concentrating
on the optimal  points $\{x_1^*,\ldots,x_\kappa^*\}$. Since the minimum value of $V$ is 0, then we estimate the probabilities of $V(X_1)>\tau$ and $V(\widetilde{Y}_{t_k})>\tau$ for any $\tau >0$, and  establish the non-asymptotic bounds  between the law of the samples generated from Algorithm \ref{alg:1} and the target distribution $\mu_{\sigma}$ in the Wasserstein-2 distance.
Recall that the linear growth condition \eqref{cond1}  and Lipschitz continuity \eqref{cond2} hold under conditions (\textbf{P1}) and (\textbf{P2}), which make the Schr{\"o}dinger-F\"{o}llmer SDE \eqref{sch-equation} have the unique  strong solution.
Besides, we  assume that the drift term $b(x,t)$ is Lipschitz continuous in $x$ and $\frac{1}{2}$-H{\"o}lder continuous in $t$,
\begin{align}\label{cond3}
\|b(x,t)-b(y,s)\|_2\leq  C_{2,\sigma} \left(\|x-y\|_2+|t-s|^{\frac{1}{2}}\right), \tag{C3}
\end{align}
where 
 $C_{2,\sigma}$ 
is a positive constant depending on $\sigma$.
\begin{remark}\label{R3}
\eqref{cond1} and \eqref{cond2} are the essentially sufficient
conditions such that  the
Schr{\"o}dinger-F\"{o}llmer SDE \eqref{sch-equation}
admits the unique strong solution.
 \eqref{cond3} has  been introduced in  Theorem 4.1 of \cite{tzen2019theoretical}
 and it is also similar to the condition (H2) of \cite{chau2019stochastic} and Assumption 3.2 of \cite{barkhagen2018stochastic}.
 Obviously,  \eqref{cond3} implies  \eqref{cond2},  and \eqref{cond1} holds if the drift term $b(x,t)$ is bounded over $\mathbb{R}^d\times[0,1]$. 
\end{remark}

Firstly,  we show that the Gibbs measure $\mu_{\sigma}$ weakly converges to a multidimensional distribution. This result can be traced back to the 1980s. For the overall continuity of the article, we combine the Laplace's method in
\cite{hwang1980laplace,hwang1981generalization} to give a detailed proof of the result. The key idea is to prove that for each $\delta'>0$  small enough, $ \mu_{\sigma}(\{x;\|x-x_i^*\|_2 <\delta' \})$ converges to $\dfrac{\left(\det \nabla^2 V(x_i^*) \right)^{-\frac 1 2}}{\sum_{j=1}^\kappa \left(\det \nabla^2 V(x_j^*) \right) ^{-\frac 1 2}}$ as $\sigma \downarrow 0$.

Next, we want to estimate the probabilities of $V(X_1)>\tau$ and $V(\widetilde{Y}_{t_K})>\tau$ for any $\tau >0$.  However, the second analysis is more complicated due to the discretization, and the main idea comes from \cite{dalalyan2017theoretical,cheng2018underdamped}, which constructs a continuous-time interpolation 
SDE
for the Euler-Maruyama discretization. In their works, the relative entropy is controlled via using the Girsanov's theorem to estimate Radon-Nikodym derivatives.

Another method of controlling the relative entropy is proposed by
\cite{mou2019improved}.
By direct calculations, the time derivative of the relative entropy between the interpolated and the original 
SDE
\eqref{sch-equation} is controlled by the mean squared difference between the drift terms of the Fokker-Planck equations for the original and the interpolated processes. Compared to the bound obtained from Lemma \ref{lemma 3.2}, the  bound in \cite{mou2019improved} has an additional backward conditional expectation inside the norm. It becomes a key reason for obtaining higher precision orders. But it must satisfy the dissipative condition to the drift term of the SDE and initial distribution smoothness.

The concrete results are showed in the following Theorems \ref{th1}, \ref{th2}, \ref{th3}. See Appendix \ref{append} for detailed proofs.

\begin{theorem}\label{th1}
Let $V: \mathbb R^d \rightarrow \mathbb R$ be a twice continuously differentiable  function. Suppose there exists a finite set $N:=\{x\in \mathbb R^d;V(x)=\inf_x V(x) \}=\{x_1^*,\ldots, x_\kappa^*\}, \kappa \ge 1$ and $\int_{\mathbb R^d} \exp(-V(x))dx< \infty$, then
\begin{equation*}
\mu_{\sigma} \xrightarrow{w} \dfrac{\sum_{i=1}^\kappa \left(\det \nabla^2 V(x_i^*) \right) ^{-\frac 1 2}\delta_{x_i^*}}{\sum_{j=1}^\kappa \left(\det \nabla^2 V(x_j^*) \right) ^{-\frac 1 2}}, \quad \text{as $\sigma \downarrow 0.$}
\end{equation*}
\end{theorem}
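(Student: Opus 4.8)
The plan is to run the classical Laplace method: localize the mass of $\mu_\sigma$ near the minimizers, expand $V$ to second order there, and read off the Gaussian weights. Throughout I take $\inf_x V(x)=0$ (the normalization adopted before Proposition~\ref{SBP}); for small $\delta'>0$ set $U_i:=\{x\in\mathbb R^d:\|x-x_i^\ast\|_2<\delta'\}$. Since $N$ is finite and $V$ is continuous, the $U_i$ are pairwise disjoint once $\delta'$ is small, and under Assumption~$(\mathbf A)$ each $x_i^\ast$ lies in the interior of $B_R$. I also use — implicit in the statement, since $\det\nabla^2 V(x_i^\ast)$ appears in the conclusion — that every Hessian $\nabla^2 V(x_i^\ast)$ is positive definite. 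By the definition of weak convergence it suffices to show $\int g\,d\mu_\sigma\to\sum_i w_i\,g(x_i^\ast)$ for every bounded continuous $g$, with $w_i:=\big(\det\nabla^2 V(x_i^\ast)\big)^{-1/2}\big/\sum_{j}\big(\det\nabla^2 V(x_j^\ast)\big)^{-1/2}$; and this follows from (i) $\mu_\sigma(U_i)\to w_i$ for each $i$ and (ii) $\mu_\sigma\big(\mathbb R^d\setminus\bigcup_i U_i\big)\to0$, via a routine two-stage $\varepsilon$-argument (first fix $\delta'$ small using continuity of $g$ at the $x_i^\ast$, then send $\sigma\downarrow0$).

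For the local estimate I substitute $x=x_i^\ast+\sqrt\sigma\,u$ in $\int_{U_i}e^{-V(x)/\sigma}\,dx$, getting $\sigma^{d/2}\int_{\|u\|_2<\delta'/\sqrt\sigma}\exp\!\big(-V(x_i^\ast+\sqrt\sigma\,u)/\sigma\big)\,du$. Taylor's theorem with $\nabla V(x_i^\ast)=0$ and $V(x_i^\ast)=0$ shows the integrand tends pointwise to $\exp\!\big(-\tfrac12 u^\top\nabla^2 V(x_i^\ast)u\big)$ as $\sigma\downarrow0$. To pass the limit under the integral I need a $\sigma$-free integrable majorant, and this is where positive-definiteness of the Hessian is essential: by continuity of $\nabla^2 V$, after shrinking $\delta'$ there is $c_i>0$ with $V(x_i^\ast+y)\ge c_i\|y\|_2^2$ for $\|y\|_2<\delta'$, so the integrand is dominated by $\exp(-c_i\|u\|_2^2)$ uniformly in $\sigma$. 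Dominated convergence then gives
\[
\int_{U_i}e^{-V(x)/\sigma}\,dx=\sigma^{d/2}\Big[(2\pi)^{d/2}\big(\det\nabla^2 V(x_i^\ast)\big)^{-1/2}+o(1)\Big],\qquad\sigma\downarrow0 .
\]

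For the tail put $G:=\mathbb R^d\setminus\bigcup_i U_i$. Under $(\mathbf A)$, $G\cap B_R$ is compact with $V>0$ on it, so $V\ge\eta>0$ there, while on $\mathbb R^d\setminus B_R$ one has $V(x)=\|x\|_2^2/2\ge R^2/2$; hence, using the elementary bound $\int_{\|x\|_2>R}e^{-\|x\|_2^2/(2\sigma)}\,dx\le e^{-R^2/(4\sigma)}(4\pi\sigma)^{d/2}$,
\[
\int_G e^{-V(x)/\sigma}\,dx\le\mathscr L(B_R)\,e^{-\eta/\sigma}+e^{-R^2/(4\sigma)}(4\pi\sigma)^{d/2}=o\big(\sigma^{d/2}\big)\quad(\sigma\downarrow0).
\]
(Absent $(\mathbf A)$ the same holds under any coercivity assumption on $V$, as in the Laplace analyses of \cite{hwang1980laplace,hwang1981generalization}.) Adding the local estimates over $i$ and this tail bound gives $C_\sigma=\sigma^{d/2}\big[(2\pi)^{d/2}\sum_j(\det\nabla^2 V(x_j^\ast))^{-1/2}+o(1)\big]$; dividing, $\mu_\sigma(U_i)\to w_i$ and $\mu_\sigma(G)\to0$, which are (i) and (ii). The only genuinely non-routine point is the local step — securing the $\sigma$-uniform quadratic lower bound on $V$ near each $x_i^\ast$ that legitimizes the dominated-convergence passage, which is exactly where non-degeneracy of the Hessians is used; the rest (disjointness of the $U_i$, the exponentially small tail, and the $\varepsilon$-argument for weak convergence) is bookkeeping.
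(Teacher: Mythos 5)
Your proposal is correct and follows essentially the same route as the paper: the Laplace method, localizing mass in small balls around each minimizer, establishing $\mu_\sigma(U_i)\to w_i$ and a vanishing tail, then deducing weak convergence. The only technical variations are (a) in the local estimate the paper's Lemma~\ref{lemma 2.1} sandwiches $V$ between $\frac{1\mp\varepsilon}{2}(x-x_i^\ast)^\top\nabla^2V(x_i^\ast)(x-x_i^\ast)$, computes the two Gaussian integrals explicitly, and sends $\varepsilon\downarrow 0$, whereas you perform the rescaling $x=x_i^\ast+\sqrt\sigma\,u$ and invoke dominated convergence with the majorant $\exp(-c_i\|u\|_2^2)$ — two standard, interchangeable implementations of the same Laplace expansion; and (b) for the tail the paper argues from $\int e^{-V}<\infty$ and a positive gap $a(\delta')>\min V$, while you appeal to Assumption~$(\textbf{A})$ for an explicit exponential bound outside $B_R$ — again a cosmetic difference, since in both cases the remainder is $o(\sigma^{d/2})$. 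Both proofs also rely, as you correctly flag, on the nondegeneracy of the Hessians $\nabla^2V(x_i^\ast)$, which is implicit in the theorem's conclusion.
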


Under  Theorem \ref{th1},  a natural question is to consider the convergence rate about the measure $\mu_{\sigma}$ converging to  the multidimensional distribution. 
To that end, we  apply the tools from the large deviation
of the Gibbs measure which is absolutely continuous with respect to Lebesgue measure, 
see \cite{chiang1987diffusion,holley1989asymptotics,marquez1997convergence}.
Therefore we can obtain the following property.
\begin{proposition}\label{prop1}
Assume that the conditions of Theorem \ref{th1} hold, then for all $\tau>0$,
\begin{equation*}
\lim_{\sigma \to 0} \sigma \log \mu_{\sigma}\left(V(x)-\min_x V(x) \ge \tau \right)=-\tau.
\end{equation*}
\end{proposition}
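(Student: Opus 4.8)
The plan is to establish matching upper and lower bounds on $\sigma \log \mu_{\sigma}(V(x) - \min_x V(x) \ge \tau)$, both tending to $-\tau$ as $\sigma \downarrow 0$. Throughout write $m := \min_x V(x) = 0$ (using the normalization already adopted in the paper) and set $A_\tau := \{x : V(x) \ge \tau\}$, so that
\[
\mu_{\sigma}(A_\tau) = \frac{\int_{A_\tau} \exp(-V(x)/\sigma)\,dx}{\int_{\mathbb{R}^d} \exp(-V(x)/\sigma)\,dx}.
\]
For the \emph{denominator}, I would use the Laplace/Hwang-type asymptotics already invoked in Theorem \ref{th1}: since the global minima $\{x_1^*,\dots,x_\kappa^*\}$ are nondegenerate, $\int_{\mathbb{R}^d}\exp(-V(x)/\sigma)\,dx = (2\pi\sigma)^{d/2}\bigl(\sum_{i=1}^\kappa (\det\nabla^2 V(x_i^*))^{-1/2} + o(1)\bigr)$, hence $\sigma \log \int_{\mathbb{R}^d}\exp(-V/\sigma)\,dx \to 0$. (One only needs the cruder statement that this log-integral is $o(1/\sigma)$, which also follows from $\int \exp(-V) < \infty$ together with the $\|x\|^2/2$ growth at infinity from $(\mathbf A)$.)

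For the \emph{numerator}, the upper bound is immediate: on $A_\tau$ we have $V(x) \ge \tau$, so, splitting off a factor and using the quadratic growth of $V$ outside $B_R$ to control the tail,
\[
\int_{A_\tau}\exp(-V(x)/\sigma)\,dx \le \exp(-\tau/\sigma)\int_{\mathbb{R}^d}\exp\bigl(-(V(x)-\tau)/\sigma\bigr)\,dx \le \exp(-\tau/\sigma)\int_{\mathbb{R}^d}\exp\bigl(-\tfrac12(V(x)-\tau)\bigr)\,dx
\]
for $\sigma \le 1/2$, the last integral being a finite $\sigma$-independent constant. Taking $\sigma\log$ and combining with the denominator gives $\limsup_{\sigma\to 0}\sigma\log\mu_\sigma(A_\tau) \le -\tau$. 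For the matching lower bound I would exhibit a small region inside $A_\tau$ where $V$ is not much larger than $\tau$: by continuity of $V$ there is a point $x_\tau$ with $V(x_\tau) = \tau$ (if $\tau$ is below $\sup V$; the case $\tau \ge \sup V$ is trivial since $A_\tau$ can only fail to have positive measure, and even then one works with $\tau' < \tau$ and lets $\tau'\uparrow\tau$) and, fixing $\eta>0$, a ball $B(x_\tau,r)$ on which $V \le \tau+\eta$; then
\[
\int_{A_\tau}\exp(-V/\sigma)\,dx \ge \int_{B(x_\tau,r)\cap A_\tau}\exp(-V/\sigma)\,dx \ge c\,r^d\exp(-(\tau+\eta)/\sigma)
\]
for a suitable geometric constant $c>0$, provided $B(x_\tau,r)\cap A_\tau$ has positive Lebesgue measure (which holds since, by the hypothesis on $V$ recalled in the introduction, $\mathscr L(V < V(x_i^*)+\varepsilon)>0$ forces sublevel sets to be fat, and one can choose $x_\tau$ on the boundary of a sublevel set so the intersection is non-null; alternatively pick $x_\tau$ in the interior of $A_\tau$ with $V(x_\tau)$ slightly above $\tau$). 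Taking $\sigma\log$ yields $\liminf_{\sigma\to 0}\sigma\log\mu_\sigma(A_\tau)\ge -(\tau+\eta)$, and letting $\eta\downarrow 0$ finishes the argument.

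The main obstacle is not analytic but topological/measure-theoretic: making the lower bound rigorous requires knowing that $A_\tau = \{V\ge\tau\}$ has positive Lebesgue measure \emph{and} contains points where $V$ is arbitrarily close to $\tau$ from above, i.e. that $\tau$ is not an isolated "jump" in the distribution function $\sigma\mapsto\mathscr L(V\le\sigma)$. Under Assumption $(\mathbf A)$ with $V$ twice continuously differentiable and equal to $\|x\|_2^2/2$ outside $B_R$, the function $V$ is continuous and its range is an interval $[0,\infty)$, so for every $\tau>0$ the set $\{V=\tau\}$ is nonempty and $\{V>\tau\}$ has nonempty interior; continuity then delivers the ball $B(x_\tau,r)$ with $V\le\tau+\eta$ on it, with positive measure inside $A_\tau$. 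I would state this as a short lemma and dispatch it using the intermediate value theorem along a ray from a global minimizer out to infinity. Everything else is the standard Laplace-principle bookkeeping, and the two-sided estimate then gives $\lim_{\sigma\to 0}\sigma\log\mu_\sigma(V(x)-\min_x V(x)\ge\tau) = -\tau$ as claimed.
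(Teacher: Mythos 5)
Your approach is genuinely different from the paper's. The paper invokes the large deviation principle directly: it uses Varadhan's theorem to show that $\{\mu_\sigma\}$ satisfies an LDP with rate function $I(x)=V(x)-\min V$, after verifying the Laplace principle via Lemma~\ref{lemma 2.1} together with Lemma 2.2 of Varadhan (1966), and then reads off $-\inf_{A_\tau}I=-\tau$ for the closed set $A_\tau$. Your plan is a self-contained two-sided Laplace estimate on the ratio $\mu_\sigma(A_\tau)$, which is more elementary and avoids the LDP machinery entirely; the paper's route is shorter once the LDP is in hand and simultaneously packages the statement with Theorem~\ref{th1}.

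There is, however, a concrete flaw in your upper-bound chain. You wrote
\[
\int_{A_\tau}\exp(-V/\sigma)\,dx \;\le\; \exp(-\tau/\sigma)\int_{\mathbb{R}^d}\exp\bigl(-(V(x)-\tau)/\sigma\bigr)\,dx \;\le\; \exp(-\tau/\sigma)\int_{\mathbb{R}^d}\exp\bigl(-\tfrac12(V(x)-\tau)\bigr)\,dx,
\]
but the first inequality is content-free (the middle term equals $\int_{\mathbb{R}^d}\exp(-V/\sigma)\,dx = C_\sigma$, so it just says $\mu_\sigma(A_\tau)\le 1$), and the second inequality is false: on $\{V<\tau\}$ one has $V-\tau<0$, so $\exp(-(V-\tau)/\sigma)=\exp((\tau-V)/\sigma)$ \emph{grows} as $\sigma\downarrow 0$ and is certainly not dominated by $\exp(-\tfrac12(V-\tau))$ for $\sigma\le 1/2$; indeed the middle integral diverges exponentially in $1/\sigma$ by the contribution near the minimizers. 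The fix is to keep the domain of integration at $A_\tau$ throughout, where $V-\tau\ge 0$ makes the monotonicity in $\sigma$ go the right way:
\[
\int_{A_\tau}\exp(-V/\sigma)\,dx = \exp(-\tau/\sigma)\int_{A_\tau}\exp\bigl(-(V-\tau)/\sigma\bigr)\,dx \le \exp(-\tau/\sigma)\,e^{\tau}\int_{\mathbb{R}^d}\exp(-V)\,dx
\]
for $\sigma\le 1$, using only $\int e^{-V}<\infty$ from the hypotheses of Theorem~\ref{th1}. With this correction, and combined with your denominator estimate $\sigma\log C_\sigma\to 0$ and the lower-bound construction near a level-$\tau$ point (which is sound, modulo the topological lemma you sketch), the argument goes through.
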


\begin{remark}
Although we can directly use the large deviation principle to obtain Proposition \ref{prop1}, further, we can conclude that the Gibbs measure $\mu_{\sigma}$  weakly converges to the global minimum points of the potential function $V$ and obtain the corresponding convergence rate. However, we can not directly obtain the specific limit distribution form 
Proposition \ref{prop1}.
\end{remark}
\begin{theorem}\label{th2}
Suppose  $(\textbf{A})$ 
holds. Then, for each $\varepsilon \in (0, \tau), \sigma \in (0,1)$, there exists a constant $C_{\tau,\varepsilon,d}$ (depending on $\tau, \varepsilon, d$)  given in \eqref{Const1} such that
\begin{align}
& \mathbb P(V(X_1)>\tau)\leq C_{\tau,\varepsilon,d} \exp\left(-\frac{\tau-\varepsilon}{\sigma} \right),\notag\\ 
& \mathbb P(V(\widetilde{Y}_{t_K})>\tau) \leq C_{\tau,\varepsilon,d} \exp\left(-\frac{\tau-\varepsilon}{\sigma} \right)+
 C^{\sharp}_{1,\sigma} \sqrt{d(2d+3) s} +C^{\sharp}_{2,\sigma} \sqrt{\frac{4d}{m}}, \label{eq-thm3.5-2}
\end{align}
where 
\begin{align*}
& C^{\sharp}_{1,\sigma} := \frac{\gamma_{\sigma}}{\xi_{\sigma}}+\frac{\gamma^3_{\sigma}}{\xi^3_{\sigma}}, \quad \frac{\gamma_{\sigma}}{\xi_{\sigma}}=\left\lbrace \left(\frac{M_{2,R}}{\sigma} \right)^2 +\frac{M_{3,R}}{\sigma} \right \rbrace \exp\left(\frac{M_{1,R}-m_{1,R}}{\sigma} \right), \\
& C^{\sharp}_{2,\sigma} :=\frac{\gamma^2_{\sigma}}{\xi^2_{\sigma}} +\frac{\gamma_{\sigma} \zeta_{\sigma}}{\xi^2_{\sigma}}, \quad \frac{\gamma_{\sigma}}{\xi_{\sigma}}=\left\lbrace \left(\frac{M_{2,R}}{\sigma} \right)^2 +\frac{M_{3,R}}{\sigma} \right \rbrace \exp\left(\frac{M_{1,R}-m_{1,R}}{\sigma} \right),
\end{align*}
$M_{1,R}$, $M_{2,R}$, $M_{3,R}$ and $m_{1,R}$ are given in \eqref{eqn:M1-M3}-\eqref{eqn:m1}, and $\zeta_{\sigma}=\exp(M_{1,R}/ \sigma)$. 
\end{theorem}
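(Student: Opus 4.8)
The plan is to bound the two probabilities separately, using the weak-convergence/large-deviation heuristic encapsulated in Proposition~\ref{prop1} to handle the ``continuous'' term $\mathbb P(V(X_1)>\tau)$, and then absorbing the discretization and Monte-Carlo errors into two extra terms via a Wasserstein-type comparison. For the first bound, since $X_1\sim\mu_\sigma$ by Proposition~\ref{SBP}, we have $\mathbb P(V(X_1)>\tau)=\mu_\sigma(V(x)-\min V>\tau)$. Rather than invoking the asymptotic statement of Proposition~\ref{prop1} directly (which only gives the exponential rate in the limit), I would redo the Laplace-type estimate quantitatively: split $\mathbb R^d=\{V\le\tau\}\cup\{V>\tau\}$, bound $\int_{\{V>\tau\}}e^{-V/\sigma}dx\le e^{-(\tau-\varepsilon)/\sigma}\int_{\{V>\tau\}}e^{-(V-(\tau-\varepsilon))/\sigma}dx$, and use the quadratic growth of $V$ outside $B_R$ (Assumption~$(\textbf{A})$) to control the remaining integral by a constant depending only on $\tau,\varepsilon,d$; meanwhile bound $C_\sigma$ from below by a Laplace expansion near the minimizers (here the $\varepsilon$-slack absorbs the subexponential Laplace factors $(\det\nabla^2 V(x_i^*))^{-1/2}(2\pi\sigma)^{d/2}$). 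The ratio gives $\mu_\sigma(V>\tau)\le C_{\tau,\varepsilon,d}\exp(-(\tau-\varepsilon)/\sigma)$, and reading off the explicit constant yields the displayed $C_{\tau,\varepsilon,d}$ referenced as \eqref{Const1}.

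For the second bound, write $\mathbb P(V(\widetilde Y_{t_K})>\tau)\le \mathbb P(V(X_1)>\tau)+\big|\mathbb P(V(\widetilde Y_{t_K})>\tau)-\mathbb P(V(X_1)>\tau)\big|$. The first piece is already bounded. For the second, I would exploit that $V$ is globally Lipschitz on the relevant region (it is $C^2$ on $B_R$ and quadratic outside), so $|V(x)-V(y)|$ is controlled by $\|x-y\|_2$ up to a linear term, and then relate the difference of the two probabilities (equivalently, the difference of expectations of a suitable test function) to $\mathbb E\|\widetilde Y_{t_K}-X_1\|_2$, which is bounded by the Wasserstein-2 distance $\mathcal W_2(\mathrm{Law}(\widetilde Y_{t_K}),\mu_\sigma)$. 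The latter splits into a discretization error (Euler--Maruyama of \eqref{sch-equation} with step $s$) and a Monte-Carlo error (replacing $b$ by $\tilde b_m$). Using the Lipschitz and linear-growth bounds \eqref{cond1}--\eqref{cond3} together with the explicit bounds $\|b\|_2\le\gamma_\sigma/\xi_\sigma$ and $\|\nabla b\|_2\le\gamma_\sigma/\xi_\sigma+\gamma_\sigma^2/\xi_\sigma^2$ derived in the excerpt, a standard Gronwall argument over the $K=1/s$ steps produces a discretization term of order $C^\sharp_{1,\sigma}\sqrt{d(2d+3)s}$; bounding $\mathbb E\|\tilde b_m(\cdot)-b(\cdot)\|_2$ by the standard deviation of the $m$-sample ratio estimator (variance of numerator and denominator both controlled by $\gamma_\sigma,\xi_\sigma,\zeta_\sigma$) and propagating it through the same Gronwall recursion yields the term $C^\sharp_{2,\sigma}\sqrt{4d/m}$.

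Concretely the steps, in order, are: (1) prove the quantitative Laplace bound $\mu_\sigma(V>\tau)\le C_{\tau,\varepsilon,d}e^{-(\tau-\varepsilon)/\sigma}$ and identify $C_{\tau,\varepsilon,d}$; (2) establish a global Lipschitz-type control of $V$ so that $\mathbb P(V(\cdot)>\tau)$ comparisons reduce to $\mathcal W_1$ or $\mathcal W_2$ distances; (3) bound $\mathcal W_2(\mathrm{Law}(Y_{t_K}),\mu_\sigma)$ for the exact-drift Euler scheme by $C^\sharp_{1,\sigma}\sqrt{d(2d+3)s}$ via Gronwall, using \eqref{cond1}--\eqref{cond3} and the derivative bounds on $b$; (4) bound the additional error from $b\mapsto\tilde b_m$ by a concentration/variance estimate on the ratio estimator, giving $C^\sharp_{2,\sigma}\sqrt{4d/m}$; (5) combine via the triangle inequality. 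I expect step~(4)---controlling the Monte-Carlo error of the \emph{ratio} estimator $\tilde b_m$ uniformly in the state and propagating it through the discrete recursion without blowing up the constants---to be the main obstacle, since the ratio is not an unbiased estimator of $b$ and one must carefully bound both the bias and the variance in terms of $\gamma_\sigma,\xi_\sigma,\zeta_\sigma$ and feed the resulting per-step error into the Gronwall loop; the Laplace estimate in step~(1) and the Gronwall argument in step~(3) are comparatively routine given the hypotheses already in place.
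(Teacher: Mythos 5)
Your first step (the quantitative Laplace bound for $\mathbb P(V(X_1)>\tau)$) is essentially what the paper does: they bound the numerator $\int_{\{V>\tau\}}e^{-V/\sigma}$ by $2\,\mathrm{Vol}(B_{R^\star})e^{-\tau/\sigma}$ using the quadratic growth outside $B_R$, and the denominator from below by $\mathrm{Vol}(B_r)e^{-\varepsilon/\sigma}$ on a small ball around a minimizer (no Laplace expansion needed; the $\varepsilon$-slack does the work). That part is fine.

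The second part, however, has a genuine gap. You propose to bound
$\bigl|\mathbb P(V(\widetilde Y_{t_K})>\tau)-\mathbb P(V(X_1)>\tau)\bigr|$
via a Wasserstein-2 (or Wasserstein-1) distance between $\mathrm{Law}(\widetilde Y_{t_K})$ and $\mu_\sigma$, using a ``Lipschitz control of $V$.'' This does not go through: the quantity you need to compare is $\mathbb E[g(\widetilde Y_{t_K})]-\mathbb E[g(X_1)]$ with $g=\mathbbm 1_{\{V>\tau\}}$, and the indicator of a super-level set is \emph{not} Lipschitz, regardless of how smooth $V$ is (the jump across the level set $\{V=\tau\}$ breaks any Lipschitz bound). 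Kantorovich--Rubinstein duality only controls expectations of Lipschitz test functions, so Wasserstein distance does not control differences of probabilities of arbitrary events. To salvage a Wasserstein-based argument you would need an additional anticoncentration/regularity estimate on the distribution of $V(X_1)$ near the level $\tau$ (to mollify the indicator), which you do not supply and which would introduce a new, non-trivial assumption. (Note also that $V=\|x\|_2^2/2$ outside $B_R$, so $V$ is not globally Lipschitz; the ``up to a linear term'' correction you mention cannot fix the indicator problem.)

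The paper avoids this entirely by working with total variation distance, which \emph{does} control $|\mathbb P(A)-\mathbb P'(A)|$ for every measurable $A$. Concretely, it uses
\[
\mathbb P(V(\widetilde Y_{t_K})>\tau)\le \mathbb P(V(X_{t_K})>\tau)+\|X_{t_K}-Y_{t_K}\|_{TV}+\|\widetilde Y_{t_K}-Y_{t_K}\|_{TV},
\]
then Pinsker's inequality $\|\cdot\|_{TV}\le\sqrt{2\,\mathbb D_{\mathrm{KL}}}$, and then computes each KL divergence via Girsanov (Lemma~\ref{lemma 3.2}) applied to a continuous-time interpolation of the Euler scheme: the discretization error gives the $C^\sharp_{1,\sigma}\sqrt{d(2d+3)s}$ term, and replacing $b$ by $\tilde b_m$ (with Lemma~\ref{lemma6}'s variance bound on the ratio estimator) gives the $C^\sharp_{2,\sigma}\sqrt{4d/m}$ term. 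Your Gronwall/Wasserstein machinery belongs to the proof of Theorem~\ref{th3}, not this one. To repair your argument, replace the Wasserstein comparison in step~(2) with a total-variation comparison, Pinsker's inequality, and the Girsanov/KL computation for the interpolated SDEs; your steps~(3)--(4), suitably rewritten in KL rather than $W_2$, then give exactly the two error terms in \eqref{eq-thm3.5-2}.
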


\begin{remark}
 The first term in the right hand side of \eqref{eq-thm3.5-2} originates from the difference between Gibbs sampling and optimization, which is determined by the simulated annealing method itself. The latter two terms are caused by Euler discretization of SDE \eqref{sch-equation} and Monte Carlo  estimation of drift coefficient $b(x,t)$ in \eqref{driftb}. The latter two terms of \eqref{eq-thm3.5-2} 
  depend on $d$ polynomially and $\sigma$ exponentially.
This result is  contrary to the  some Langevin methods, that is, 
the related convergence error bounds depend on $d$ exponentially \citep{wang2009log,hale2010asymptotic,menz2014poincare,raginsky2017non}
implying that the efficiency of Langevin samplers may suffer from the curse of dimensionality.

By Theorem \ref{th2}, 
$\forall \ \  0 < \delta \ll 1$,  with probability at least $1-\sqrt{\delta}$,  $\widetilde{Y}_{t_K} $ is a $\tau$-global minimizer of $V$, i.e.,
$V(\widetilde{Y}_{t_K})\leq \tau + \inf V(x)$, 
if the number of iterations $K\geq \mathcal{O}\left(\frac{d^2}{\delta}\right)$,  the number of Gaussian samples per iteration  $m \geq \mathcal{O}\left(\frac{d}{\delta}\right)$ and $\sigma \leq \mathcal{O} \left(\frac{\tau}{\log(1/\delta)}\right)$. 
\end{remark}

At last, we establish the non-asymptotic bounds 
between the law of the samples generated from Algorithm \ref{alg:1} and the  distribution $\mu_{\sigma}$ in the Wasserstein-2 distance.
We introduce the definition of Wasserstein distance.
Let $\mathcal{D}(\nu_1, \nu_2)$ be the collection of coupling probability measures on $\left(\mathbb{R}^{2d},\mathcal{B}(\mathbb{R}^{2d})\right)$ such that its respective marginal distributions are $\nu_1$ and $\nu_2$. The Wasserstein distance of order $p \geq 1$ measuring the discrepancy between $\nu_1$  and $\nu_2$ is defined as
\begin{align*}
W_{p}(\nu_1, \nu_2)=\inf _{\nu \in \mathcal{D}(\nu_1, \nu_2)}\left(\int_{\mathbb{R}^{d}} \int_{\mathbb{R}^{d}}\left\|\theta_1-\theta_2\right\|_2^{p} \mathrm{d} \nu\left(\theta_1,\theta_2\right)\right)^{1/p}.
\end{align*}

\begin{theorem}\label{th3}
Assume (\textbf{A}) and \eqref{cond3} hold,
then
\begin{align*}
W_2(\text{Law}(\widetilde{Y}_{t_K}),\mu_{\sigma})\leq C^{\sharp}_{\sigma} \left ( C^{\sharp}_{3,\sigma} \sqrt{s} +C^{\sharp}_{2,\sigma} \sqrt{\frac{16d}{m}}  \right ),
\end{align*}
where 
\begin{align*}
& C^{\sharp}_{\sigma} :=
\exp(1/2+8C_{2,\sigma}^2), \quad  C^{\sharp}_{3,\sigma} :=2C_{1,\sigma} +4C^2_{1,\sigma} \left \lbrace 1+ C_{0,\sigma} \exp\left(2\sqrt{C_{0,\sigma}} +1 \right) \right \rbrace^{\frac 1 2}, \\
& C^{\sharp}_{2,\sigma} :=\left(\frac{\gamma_{\sigma}}{\xi_{\sigma}} \right)^2 + \frac{\gamma_{\sigma} \zeta_{\sigma}}{\xi^2_{\sigma}}, \quad  \frac{\gamma_{\sigma}}{\xi_{\sigma}}=\left\lbrace \left(\frac{M_{2,R}}{\sigma} \right)^2 +\frac{M_{3,R}}{\sigma} \right \rbrace \exp\left(\frac{M_{1,R}-m_{1,R}}{\sigma} \right),
\end{align*}
with $C_{0,\sigma} =C_{1,\sigma}=\gamma_{\sigma}/\xi_{\sigma} + \gamma^2_{\sigma}/\xi^2_{\sigma}$.
\end{theorem}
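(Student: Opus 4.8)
### Proof Proposal for Theorem \ref{th3}

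The plan is to bound $W_2(\text{Law}(\widetilde{Y}_{t_K}),\mu_\sigma)$ by inserting two intermediate objects: the law of the exact Euler-Maruyama iterate $Y_{t_K}$ (using the true drift $b$), and the law $X_1 \sim \mu_\sigma$ of the continuous Schr\"odinger-F\"ollmer diffusion at time $1$. By the triangle inequality for $W_2$,
\begin{align*}
W_2(\text{Law}(\widetilde{Y}_{t_K}),\mu_\sigma) \le W_2(\text{Law}(\widetilde{Y}_{t_K}),\text{Law}(Y_{t_K})) + W_2(\text{Law}(Y_{t_K}),\mu_\sigma).
\end{align*}
For the first term I would couple $\widetilde{Y}$ and $Y$ through the same Brownian increments $\epsilon_{k+1}$, so that the only discrepancy comes from replacing $b$ by the Monte Carlo estimator $\tilde b_m$; for the second term I would couple $Y_{t_K}$ with $X_1$ through the same driving Brownian motion, so the discrepancy comes from the time-discretization error of the scheme \eqref{emd}. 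Since $\text{Law}(X_1) = \mu_\sigma$ by Proposition \ref{SBP}, these two pieces suffice.

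First I would set up a one-step recursion for $e_k := \mathbb{E}\|\widetilde{Y}_{t_k} - Y_{t_k}\|_2^2$. Writing the difference of the two updates, expanding the square, and using that $\epsilon_{k+1}$ is independent of everything up to step $k$, the cross term with the Brownian increment vanishes in expectation; what remains is controlled by $\|b(\widetilde{Y}_{t_k},t_k) - b(Y_{t_k},t_k)\|_2^2$ — handled by the Lipschitz bound \eqref{cond2} with constant $C_{1,\sigma}$ — plus the mean-squared estimation error $\mathbb{E}\|\tilde b_m(\widetilde{Y}_{t_k},t_k) - b(\widetilde{Y}_{t_k},t_k)\|_2^2$. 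The latter is the Monte Carlo error of a ratio of $m$-sample averages of the bounded, Lipschitz quantities $\hat f_\sigma$ and $\nabla\hat f_\sigma$ (bounds $\gamma_\sigma$, lower bound $\xi_\sigma$ from (\textbf{P1})--(\textbf{P2})); a standard ratio-estimator variance computation gives a bound of order $C^{\sharp}_{2,\sigma}{}^2 \cdot d/m$, independent of $k$. This yields $e_{k+1} \le (1 + \sigma s\, C' + \sigma^2 s^2 C'')\, e_k + (\text{const})\,\sigma^2 s^2 d/m$; iterating over $K = 1/s$ steps and using $(1+as)^{1/s}\le e^{a}$ collapses the geometric factor to $\exp(O(C_{2,\sigma}^2))$, contributing the $C^{\sharp}_\sigma \cdot C^{\sharp}_{2,\sigma}\sqrt{16d/m}$ term. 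For the discretization term I would use the continuous-time interpolation of \eqref{emd}, i.e. the SDE $\mathrm{d}\bar X_t = \sigma b(\bar X_{\lfloor t/s\rfloor s}, \lfloor t/s\rfloor s)\mathrm{d}t + \sqrt\sigma \mathrm{d}B_t$, whose time-$1$ marginal equals $\text{Law}(Y_{t_K})$; comparing it to \eqref{sch-equation} via Gr\"onwall and using the joint Lipschitz/H\"older bound \eqref{cond3} (so that $\|b(\bar X_t,t)-b(\bar X_{\lfloor t/s\rfloor s},\lfloor t/s\rfloor s)\|_2 \lesssim C_{2,\sigma}(\|\bar X_t - \bar X_{\lfloor t/s\rfloor s}\|_2 + s^{1/2})$) gives a bound of order $C^{\sharp}_\sigma\, C^{\sharp}_{3,\sigma}\sqrt{s}$; the constant $C^{\sharp}_{3,\sigma}$ arises from bounding the second moment of the increment $\bar X_t - \bar X_{\lfloor t/s\rfloor s}$ using the linear growth \eqref{cond1} with constant $C_{0,\sigma}$, and a crude second-moment bound $\mathbb{E}\|\bar X_t\|_2^2 \le \text{const}$ obtained from \eqref{cond1} and Gr\"onwall.

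The main obstacle I anticipate is the Monte Carlo bound on $\mathbb{E}\|\tilde b_m - b\|_2^2$: because $\tilde b_m$ is a \emph{ratio} of empirical means, one cannot simply bound numerator and denominator errors separately without control of the denominator away from $0$. The clean way is to write $\tilde b_m - b = \frac{\hat N_m - \hat N \cdot (\hat D_m/\hat D)}{\hat D_m}$ where $\hat N_m, \hat D_m$ are the empirical numerator and denominator and $\hat N, \hat D$ their expectations, then use $\hat D_m \ge \xi_\sigma$ (which holds surely since $\hat f_\sigma \ge \xi_\sigma$ pointwise, so each sample term is $\ge \xi_\sigma$) to reduce to variances of sums of i.i.d. bounded terms, each of which is $O(1/m)$ with explicit $\gamma_\sigma,\xi_\sigma,\zeta_\sigma$ dependence. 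Keeping the $d$-dependence sharp (linear in $d$, from $\mathbb{E}\|Z\|_2^2 = d$ in the Stein form \eqref{drifte2}) is the delicate bookkeeping step. Everything else is routine: triangle inequality, coupling through shared randomness, Gr\"onwall, and $(1+as)^{1/s}\le e^a$.
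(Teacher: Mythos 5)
Your plan reaches the same bound, but via a different decomposition than the paper. The paper does not split off an intermediate exact-drift process $Y_{t_K}$: it couples $\widetilde{Y}_{t_k}$ directly to the continuous diffusion $X_{t_k}$ through the same Brownian path and runs a \emph{single} one-step recursion for $\mathbb{E}\|\widetilde{Y}_{t_k}-X_{t_k}\|_2^2$, splitting the drift error $b(X_u,u)-\tilde b_m(\widetilde Y_{t_{k-1}},t_{k-1})$ inside the recursion into a discretization part, handled by \eqref{cond3} and Lemma~\ref{lemma3}, and a Monte Carlo part, handled by Lemma~\ref{lemma6}. Your triangle-inequality decomposition $W_2(\mathrm{Law}(\widetilde{Y}_{t_K}),\mu_\sigma)\le W_2(\mathrm{Law}(\widetilde{Y}_{t_K}),\mathrm{Law}(Y_{t_K}))+W_2(\mathrm{Law}(Y_{t_K}),\mu_\sigma)$ requires two separate Gr\"onwall iterations and so picks up the exponential factor $\exp(O(C_{2,\sigma}^2))$ twice, but up to constants the result is the same; both routes rest on the same three ingredients (C3-type modulus, the moment/increment bounds of Lemmas~\ref{lemma2}--\ref{lemma3}, and the Monte Carlo variance bound of Lemma~\ref{lemma6}). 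The paper's single-recursion version is a little tighter in bookkeeping; your version is modular and perhaps easier to adapt if either error source changes.

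Two small inaccuracies in your sketch are worth flagging, though neither is fatal. First, in the recursion for $e_k=\mathbb{E}\|\widetilde{Y}_{t_k}-Y_{t_k}\|_2^2$ under the shared-$\epsilon_{k+1}$ coupling, the Brownian increment cancels \emph{exactly} in the difference, so there is no ``cross term with the Brownian increment'' to kill by independence; what you actually need to control is the cross term $\mathbb{E}\langle \widetilde{Y}_{t_k}-Y_{t_k},\ \tilde b_m(\widetilde{Y}_{t_k},t_k)-b(Y_{t_k},t_k)\rangle$. Second, that cross term does \emph{not} vanish even after conditioning, because the ratio estimator $\tilde b_m$ is biased ($\mathbb{E}_Z[\tilde b_m(x,t)]\neq b(x,t)$); you must instead absorb it via Young's/Cauchy--Schwarz, exactly as the paper does with the factor $2ac\le sa^2+c^2/s$. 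These are bookkeeping fixes, not gaps in the strategy. Your remark about the ratio estimator is exactly right and is precisely how Lemma~\ref{lemma6} proceeds: the denominator is bounded below by $\xi_\sigma$ pointwise (so surely, not just with high probability), which reduces the ratio error to variance bounds on bounded i.i.d.\ sums.
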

\begin{remark}\label{rem3.10}
 Langevin sampling method has been studied under  the (strongly) convex potential assumption \citep{durmus2016sampling, durmus2017nonasymptotic, durmus2016high-dimensional, dalalyan2017further, dalalyan2017theoretical,cheng2018convergence,dalalyan2019user-friendly}.
Also, there are some  mean-field Langevin methods, see 
\cite{garbuno2020interacting,wang2022accelerated}
for more details.
However, the Langevin diffusion process tends to the target distribution as time $t$ goes to infinity while the Schr\"{o}dinger bridge achieve this  in unit time.
The Schr\"{o}dinger bridge has been shown to have close connections with a number of problems in statistical physics,  optimal transport  and optimal control \citep{leonard2014survey}. However,  only a few recent works  use this tool for statistical sampling.
\cite{bernton2019schr} proposed  the Schr\"{o}dinger bridge sampler
 by applying  the iterative proportional fitting  method  (Sinkhorn algorithm \citep{sinkhorn1964relationship,peyre2019computational}).
 For the Gibbs measure  $\mu_{\sigma}$,
Schr{\"o}dinger bridge samplers iteratively modifies the transition kernels of the reference Markov chain to obtain a process whose marginal distribution at terminal time  is approximate to $\mu_{\sigma}$, via regression-based approximations of the corresponding iterative proportional fitting recursion.

\end{remark}

\section{Simulation studies}\label{simulation}
In this section, we conduct numerical simulations to evaluate the performance of the proposed method (Algorithm \ref{alg:1}), and compare it  with the Langevin method.
We  consider the following non-convex smooth function \citep{carrillo2021consensus}, which maps from $\mathbb{R}^d$ to $\mathbb{R}$,
$$
V(x)=\frac{1}{d} \sum_{i=1}^{d}\left[\left(x_{i}-B\right)^{2}-10 \cos \left(2 \pi\left(x_{i}-B\right)\right)+10\right]+C,
$$
with $B=\operatorname{argmin}_{x\in \mathbb{R}^d} V(x)$ and $C=\min_{x\in \mathbb{R}^d} V(x)$.
Figure \ref{f1} depicts this target function $V$ with setting $B=0,C=0,d=1$, and we can see that the number of local minimizers  in $\mathbb{R}$ is infinite and only one global minimizer exists, i.e, $x=0$.
In the numerical experiment, 
we consider the case $d=2,B=0,C=0$ in $V$.
We set  $K=200,m=1000,\sigma=0.01$ in Algorithm \ref{alg:1}, and Langevin method is implemented by R package yuima \citep{iacus2018simulation}.
As shown in Proposition \ref{SBP},
the target distribution can be  exactly
obtained  at time one.
Thus, we only need to keep the last  data in the  Euler-Maruyama discretization of Schr{\"o}dinger-F{\"o}llmer diffusion in each iteration and repeat this scheme such that the desired sample size is derived, i.e., we get one sample when running Algorithm \ref{alg:1} one time (it costs $200\times 1000$ Gaussian samples at each run).
In comparison,  the Langevin diffusion \eqref{sde} tends to the target distribution as time $t$ goes to infinity, then it should burn out sufficient data points empirically  in each  Euler-Maruyama discretization. To make the comparison fair, we run the Langevin method  50 times. At each run we generate $200\times 1000$ samples and keep  the one  with the best function value to show. 
Figure \ref{f2} shows the simulation results of 50 independent runs, where the red line  yields the global minimizer (GM) $(x_1=0,x_2=0)$.
From Figure \ref{f2},
we can conclude that our  proposed method   obtains  the global minimizer approximately and performs better than the Langevin method.
\begin{figure}[ht!]
\centering
\includegraphics[height=5.5cm,width=8.5cm]{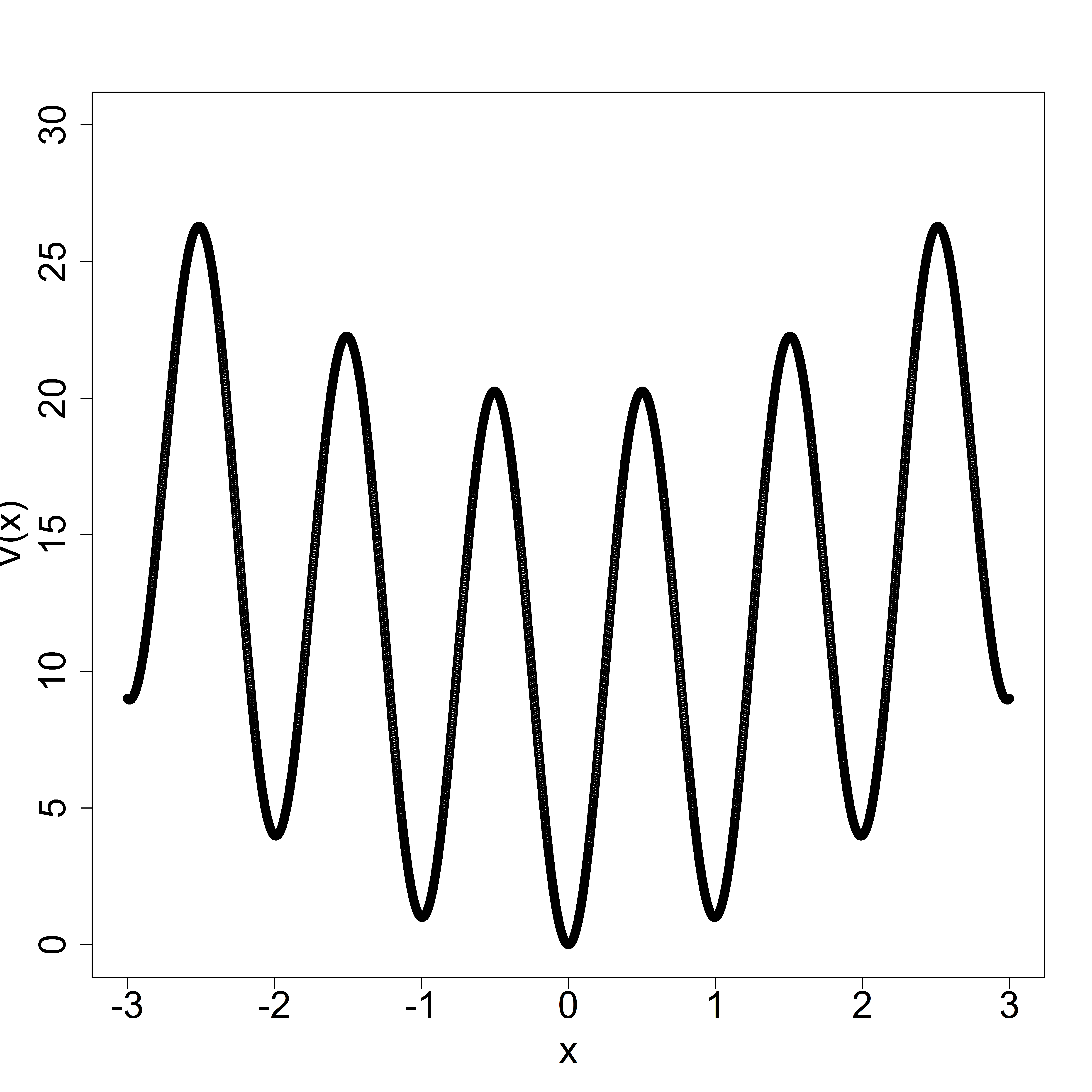}
\caption{$B=0$, $C=0$, $d=1$.}
\label{f1}
\end{figure}
\begin{figure}[ht!]
\centering
\begin{minipage}[c]{0.55\textwidth}
\centering
\includegraphics[height=5.5cm,width=8.5cm]{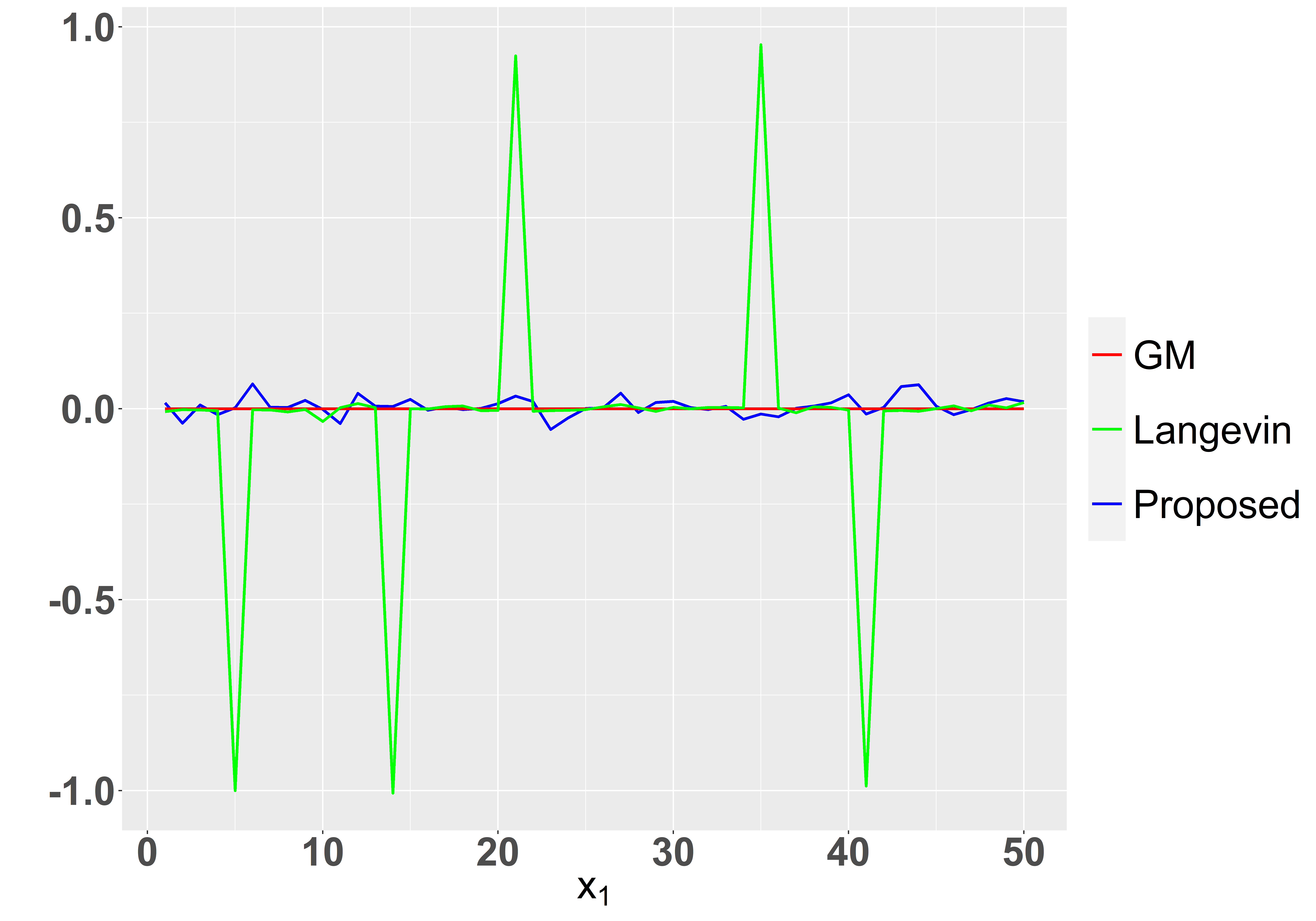}
\end{minipage}%
\begin{minipage}[c]{0.55\textwidth}
\centering
\includegraphics[height=5.5cm,width=8.5cm]{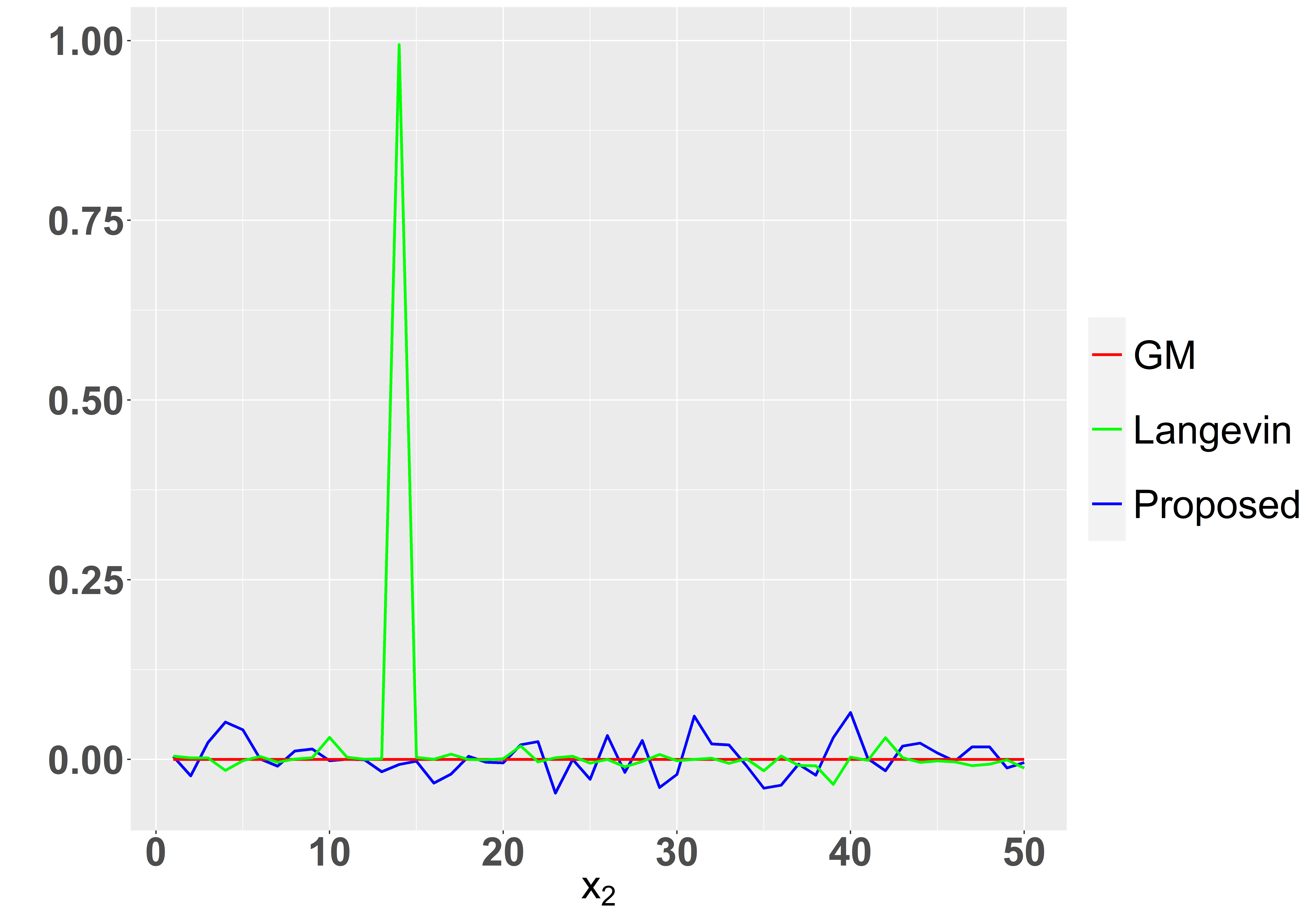}
\end{minipage}
\caption{Proposed method VS Langevin method.}
\label{f2}
\end{figure}

\section{Conclusion}\label{conlusion}
We study the problem of finding global minimizers of $V(x):\mathbb{R}^d\rightarrow\mathbb{R}$ approximately  via sampling from a probability distribution $\mu_{\sigma}$ with density $p_{\sigma}(x)=\dfrac{\exp(-V(x)/\sigma)}{\int_{\mathbb R^d} \exp(-V(y)/\sigma) dy }$ with respect to the Lebesgue measure for  $\sigma \in (0,1]$ small enough.
We analyze a sampler based on the Euler discretization scheme of the Schr{\"o}dinger-F{\"o}llmer diffusion processes with stochastic approximation under appropriate assumptions on the step size $s$ and the potential $V$. We prove that the output of the proposed sampler is an approximate global minimizer of $V(x)$  with high probability.
Moreover, simulation studies
verify the effectiveness of  the proposed method  on solving non-convex smooth optimization problems and it
performs better than the Langevin method.

\section{Acknowledgments}\label{section acknowledgments}
We would like the thank the anonymous referees for their useful comments and suggestions, which have led to considerable
improvements in the paper. This work is supported by the National Key Research and Development Program
of China (No. 2020YFA0714200), by the National Science Foundation of China (No.
12125103, No. 12071362, No. 11871474, No.11871385). The numerical calculations
have been done at the Supercomputing Center of Wuhan University.

\section{appendix}\label{append}
\setcounter{equation}{0}
\def\theequation{A.\arabic{equation}}
In the appendix, we will show   (\textbf{P1})-(\textbf{P2}) and  prove the all  Propositions and Theorems, i.e.,
Propositions  \ref{SBP}, \ref{prop1},
Theorems \ref{th1},  \ref{th2}, \ref{th3}.

\subsection{Verify  (\textbf{P1})-(\textbf{P2})}
\label{vc12}
\begin{proof}
Recall the definition of  $f_{\sigma}$ in \eqref{drb} and the assumption   $V(x)=\|x\|_2^2/2$ when $\|x\|_2>R$. Through some simple calculations, we have
\begin{align*}
\nabla \hat f_{\sigma}(x)&=\exp\left(\frac{\|x\|_2^2}{2\sigma}-\frac{V(x)}{\sigma}\right)\cdot \left(\frac{x}{\sigma}-\frac{\nabla V(x)}{\sigma}\right),\\
\nabla^2 \hat f_{\sigma}(x)&=\exp\left(\frac{\|x\|_2^2}{2\sigma}-\frac{V(x)}{\sigma}\right)\cdot \left(\frac{x}{\sigma}-\frac{\nabla V(x)}{\sigma}\right)\cdot\left(\frac{x}{\sigma}-\frac{\nabla V(x)}{\sigma}\right)^{\top}\\
&~~~~+ \exp\left(\frac{\|x\|_2^2}{2\sigma}-\frac{V(x)}{\sigma}\right) \cdot\left(\frac{\bI_d}{\sigma} -\frac{\nabla^2 V(x)}{\sigma}\right).
\end{align*}
Then,  the properties  (\textbf{P1})-(\textbf{P2})  hold if for each $\sigma \in(0,1]$,
\begin{align*}
& \lim _{\widetilde R \rightarrow \infty} \sup _{\|x\|_2 \geq \widetilde R} \exp \left(-\frac{V(x)}{\sigma}+\frac{\|x\|_2^{2}}{2 \sigma}\right)\left\|\frac{ \mathbf{I}_d-\operatorname{\nabla^2}V(x)}{\sigma}\right\|_2 \\ 
\le & \lim _{\widetilde R \rightarrow \infty} \sup _{\|x\|_2 \geq \widetilde R} \exp \left(-V(x) + \frac{\|x\|_2^{2}}{2}\right)\left\| \mathbf{I}_d-\operatorname{\nabla^2}V(x)\right\|_2 <\infty,
\end{align*}
and
\begin{align*}
& \lim _{\widetilde R \rightarrow \infty} \sup _{\|x\|_2 \geq \widetilde R} \exp \left(-\frac{V(x)}{\sigma}+\frac{\|x\|_2^{2}}{2\sigma}\right) \cdot\left\|\frac{x- \nabla V(x)}{\sigma} \right\|_2 \\
\le & \lim _{\widetilde R \rightarrow \infty} \sup _{\|x\|_2 \geq \widetilde R} \exp \left(-V(x)+\frac{\|x\|_2^{2}}{2}\right) \cdot\left\|x-\nabla V(x) \right\|_2
<\infty,
\end{align*}
where we use the fact that $a \mapsto a\exp(-ax)$ is non-increasing for $a \ge 1, x\in \mathbb R_{+}$. 
\end{proof}

\subsection{Proof of Proposition \ref{SBP}}
\begin{proof}
By (\textbf{P1}) and (\textbf{P2}),
it yields that for all $x \in \mathbb{R}^d$ and $t \in[0,1]$,
\begin{align}\label{re3}
\|b(x, t)\|_2=\frac{\left\|\nabla Q_{1-t} \hat f_{\sigma}(x)\right\|_2}{Q_{1-t} \hat f_{\sigma}(x)} \leq \frac{\gamma_{\sigma}}{\xi_{\sigma}}.
\end{align}
Then, by (\textbf{P1})-(\textbf{P2}) and \eqref{re3},  for all  $x, y \in \mathbb{R}^d$ and $t \in[0,1]$,
\begin{align*}
\left\|b(x, t)-b\left(y, t\right)\right\|_2 &=\left\|\frac{\nabla Q_{1-t} \hat f_{\sigma}(x)}{Q_{1-t} \hat f_{\sigma}(x)}-\frac{\nabla Q_{1-t} \hat f_{\sigma} \left(y\right)}{Q_{1-t} \hat f_{\sigma}\left(y\right)}\right\|_2 \\
&=\left\| \frac{\nabla Q_{1-t}\hat f_{\sigma}(x)-\nabla Q_{1-t}\hat f_{\sigma}(y)}{Q_{1-t}\hat f_{\sigma}(y)} +\frac{\nabla Q_{1-t} \hat f_{\sigma}(x)\left(Q_{1-t} \hat f_{\sigma}(y)-Q_{1-t} \hat f_{\sigma}(x)\right)}{Q_{1-t} \hat f_{\sigma}(x)Q_{1-t} \hat f_{\sigma}(y)} \right\|_2  \\
& \leq \frac{\left\|\nabla Q_{1-t} \hat f_{\sigma}(x)-\nabla Q_{1-t} \hat f_{\sigma} \left(y\right)\right\|_2}{Q_{1-t} \hat f_{\sigma} \left(y\right)}+\|b(x, t)\|_2 \cdot \frac{\left|Q_{1-t} \hat f_{\sigma} (x)-Q_{1-t} \hat f_{\sigma}\left(y\right)\right|}{Q_{1-t} \hat f_{\sigma}\left(y\right)} \\
&\leq\left(\frac{\gamma_{\sigma}}{\xi_{\sigma}}+\frac{\gamma_{\sigma}^{2}}{\xi_{\sigma}^2}\right)\left\|x-y\right\|_2.
\end{align*}
Setting $C_{1,\sigma} := \frac{\gamma_{\sigma}}{\xi_{\sigma}}+\frac{\gamma_{\sigma}^{2}}{\xi_{\sigma}^2}$
yields the Lipschitiz continuous condition \eqref{cond2}. Combining \eqref{re3} and \eqref{cond2} with the triangle inequality, we have
$$
\|b(x,t)\|_2\leq \|b(0,t)\|_2+C_1\|x\|_2\leq \frac{\gamma_{\sigma}}{\xi_{\sigma}}+C_{1,\sigma} \|x\|_2.
$$
Let $C_{0,\sigma} := \max\left\{\frac{\gamma_{\sigma}}{\xi_{\sigma}}, C_{1,\sigma} \right\}$, then
\eqref{cond1} holds. Therefore, the drift term $b(x,t)$ satisfies the linear grow condition \eqref{cond1}  and Lipschitz condition \eqref{cond2}, then the  Schr\"{o}dinger-F\"{o}llmer diffusion SDE \eqref{sch-equation} has the unique strong solution \citep{revuz2013continuous,pavliotis2014stochastic}.

Moreover, Schr\"{o}dinger-F\"{o}llmer diffusion process $\{X_t\}_{t\in [0,1]}$ defined in \eqref{sch-equation} admits the transition probability density
\begin{align*}
p_{s, t}(x, y) :=\widetilde{p}_{s, t}(x, y) \frac{Q_{1-t} f_{\sigma}(y)}{Q_{1-s} f_{\sigma}(x)},
\end{align*}
where
\begin{align*}
\widetilde{p}_{s, t}(x, y)=\frac{1}{(2 \pi \sigma (t-s))^{d/2}} \exp \left(-\frac{1}{2\sigma(t-s)}\|x-y\|^{2}_2\right)
\end{align*}
is the transition probability density of a $d$-dimensional Brownian motion $\sqrt{\sigma} B_{t}$. See \cite{dai1991stochastic,lehec2013representation} for details.
It follows that for any Borel measurable set $A \in \mathcal{B}(\mathbb{R}^d)$,
\begin{align*}
\mathbb P(X_1\in A)&=\int_A p_{0,1}(0,y) \mathrm{d} y\\
&=\int_A \widetilde{p}_{0,1}(0,y)\frac{Q_{0} f_{\sigma}(y)}{Q_{1} f_{\sigma}(0)}\mathrm{d} y \\
&=\mu_{\sigma}(A),
\end{align*}
where the last equality follows from $Q_1 f_{\sigma}(0)=\mu_{\sigma}(\mathbb R^d)=1$ and $Q_0 f_{\sigma}(y)=f_{\sigma}(y)$. Therefore, $X_1$ is distributed as the probability distribution $\mu_{\sigma}$.
This completes the proof.
\end{proof}

\subsection{Proof of Proposition \ref{prop1}}
\begin{proof}
Under the Theorem \ref{th1}, then $C_{\sigma}=\int_{\mathbb R^d} \exp\left(-V(x)/\sigma \right)dx \le \int_{\mathbb R^d} \exp\left(-V(x)\right)dx<\infty$ for all $\sigma \in (0,1]$.
According to the Varadhan's theorem 1.2.3 in \cite{dupuis2011weak}, it follows that the family $\{\mu_{\sigma} \}_{\sigma \in [0,1]}$
 on $\mathcal B(\mathbb R^d)$ satisfies large deviation principle with rate function $V(x)-\min_x V(x)$, that is,
 for every function $F \in C_b(\mathbb R^d)$, the bounded continuous function space on $\mathbb R^d$,
\begin{equation}\label{eq A2}
\lim_{\sigma \to 0} \sigma \log \int_{\mathbb R^d}C_{\sigma}^{-1} \exp\left(\frac{F(x)-V(x)}{\sigma}\right) dx=\sup_x \{F(x)-I(x)\},
\end{equation}
where the rate function $I(x)$ is defined by
\begin{equation*}
I(x) := V(x)-\min_x V(x).
\end{equation*}
Next, we only need to prove \eqref{eq A2}. On the one hand,
\begin{equation*}
 \log \int_{\mathbb R^d}C_{\sigma}^{-1} \exp\left(\frac{F(x)-V(x)}{\sigma}\right) dx=-\log \int_{\mathbb R^d} \exp\left( -\frac{V(x)}{\sigma} \right) dx + \log \int_{\mathbb R^d}\exp\left(\frac{F(x)-V(x)}{\sigma}\right) dx.
 \end{equation*}
By  Lemma \ref{lemma 2.1}, we have
\begin{align}\label{eq A3}
-\sigma \log \int_{\mathbb R^d} \exp\left( -\frac{V(x)}{\sigma} \right) dx &=-\sigma \log \int_{\mathbb R^d} \exp\left( -\frac{V(x)-\min_x V(x)}{\sigma} \right) dx + \min_x V(x) \notag \\
& \rightarrow \min_{x} V(x) + \frac d 2 \lim_{\sigma \to 0} \sigma \log \sigma=\min_{x} V(x), \quad \text{as $\sigma \downarrow 0$.}
\end{align}
On the other hand, 
for any positive $K>0$, 
combining Lemma 2.2 in  \cite{varadhan1966asymptotic} and the dominated convergence theorem yields that
\begin{align}
\lim_{\sigma \to 0} \sigma \log \int_{\mathbb R^d}\exp\left(\frac{F(x)-V(x)}{\sigma}\right) dx & = \lim_{K\to \infty} \lim_{\sigma \to 0} \sigma \log \int_{\mathbb R^d}\exp\left(\frac{F(x)-V(x) \land K}{\sigma}\right) dx \notag \\
&= \lim_{K\to \infty} \sup_x \{F(x)-V(x)\land K \}=\sup_x \{F(x)-V(x) \}. \label{eq A4}
\end{align}
Hence, by \eqref{eq A3} and \eqref{eq A4}, we get
\begin{equation*}
\lim_{\sigma \to 0} \sigma \log \int_{\mathbb R^d}C_{\sigma}^{-1} \exp\left(\frac{F(x)-V(x)}{\sigma}\right) dx=\sup_x \{F(x)-V(x)+ \min_x V(x)\}.
\end{equation*}
Since the measure $\mu_{\sigma}$ satisfies the large deviation principle with rate function $I$, if we take the closed set $F:=\{x\in \mathbb R^d;V(x)-\min_x V(x) \ge \tau \}$, then
\begin{equation*}
\lim_{\sigma \to 0} \sigma \log \mu_{\sigma}(F)=\lim_{\sigma \to 0} \sigma \log \mu_{\sigma}\left(V(x)-\min_x V(x) \ge \tau \right)=-\inf_{x \in F} I(x)=-\tau.
\end{equation*}
\end{proof}
\subsection{Preliminary lemmas for Theorem \ref{th1} and Theorem \ref{th2}}
In order to prove that the Gibbs measure $\mu_{\sigma}$ weakly converges to a multidimensional distribution and estimate the probabilities of $V(X_1)>\tau$ and $V(\widetilde{Y}_{t_k})>\tau$ for any $\tau >0$, we first need to prove the following Lemmas \ref{lemma 2.1}-\ref{lemma 3.2}.
\begin{lemma}\label{lemma 2.1}
Assume $V$ is  twice continuously differentiable and satisfies $V(x_0)=0, \nabla V(x_0)=0$, and the Hessian matrix $\nabla^2 V(x_0)$ is positive definite. When $\delta>0$ is sufficiently small, then 
\begin{equation*}
\lim_{t \to +\infty} t^{\frac d 2} \int_{U_{\delta}(x_0)} e^{-t V(x_1, \ldots, x_d)} dx_1 \cdots dx_d =\frac{(2\pi)^{\frac d 2}}{\left(\det \nabla^2 V(x_0) \right)^{\frac 1 2}}
\end{equation*}
for any $x\in U_{\delta}(x_0):=\{x\in \mathbb R^d; \|x-x_0\|_2<\delta\}$.
\end{lemma}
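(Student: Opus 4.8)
The plan is to recognise this as the classical multivariate Laplace asymptotics and to prove it by a rescaling argument combined with dominated convergence. First I would translate coordinates so that $x_0 = 0$, set $H := \nabla^2 V(0)$, which is symmetric positive definite, and put $\lambda := \lambda_{\min}(H) > 0$. Since $V\in C^2$ and $x\mapsto\nabla^2 V(x)$ is continuous, I can fix $\delta_0>0$ with $\nabla^2 V(x)\succeq \tfrac{\lambda}{2}\mathbf{I}_d$ for $\|x\|_2\le\delta_0$. Using $V(0)=0$ and $\nabla V(0)=0$, Taylor's formula with integral remainder gives $V(x) = \int_0^1 (1-r)\, x^\top \nabla^2 V(rx)\, x\,\mathrm{d}r$ on a convex neighbourhood of $0$, hence the quadratic lower bound
\[
V(x) \ \ge\ \tfrac{\lambda}{4}\,\|x\|_2^2,\qquad \|x\|_2\le\delta_0 .
\]
Because the statement is only asserted for $\delta$ sufficiently small, I may assume $\delta\le\delta_0$ throughout.

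Next I would perform the substitution $x = y/\sqrt{t}$, under which $\mathrm{d}x = t^{-d/2}\,\mathrm{d}y$ and $U_\delta(0)$ becomes the dilated ball $\{\|y\|_2<\sqrt{t}\,\delta\}$, so that
\[
t^{d/2}\!\int_{U_\delta(0)} e^{-tV(x)}\,\mathrm{d}x \ =\ \int_{\mathbb{R}^d} e^{-t\,V(y/\sqrt{t})}\,\mathbf{1}_{\{\|y\|_2<\sqrt{t}\,\delta\}}(y)\,\mathrm{d}y .
\]
On the integration region the bound above gives $t\,V(y/\sqrt{t}) \ge \tfrac{\lambda}{4}\|y\|_2^2$, so the integrand is dominated by $y\mapsto e^{-\lambda\|y\|_2^2/4}$, which lies in $L^1(\mathbb{R}^d)$. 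For each fixed $y$, the second-order Taylor expansion of $V$ at $0$ yields $t\,V(y/\sqrt{t}) = \tfrac12 y^\top H y + o(1)$ as $t\to\infty$, while $\mathbf{1}_{\{\|y\|_2<\sqrt{t}\,\delta\}}(y)\to 1$. The dominated convergence theorem then gives
\[
\lim_{t\to+\infty} t^{d/2}\!\int_{U_\delta(0)} e^{-tV(x)}\,\mathrm{d}x \ =\ \int_{\mathbb{R}^d} e^{-\frac12 y^\top H y}\,\mathrm{d}y \ =\ \frac{(2\pi)^{d/2}}{(\det H)^{1/2}} ,
\]
the last equality being the standard Gaussian integral. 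Undoing the translation recovers the claimed identity with $H = \nabla^2 V(x_0)$.

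The routine ingredients here are the Taylor sandwich and the Gaussian integral; I do not expect a genuine obstacle. The one point worth stating carefully is that the limit is independent of the choice of $\delta$ (any $\delta\le\delta_0$ gives the same answer), which is exactly why the hypothesis only asks for $\delta$ sufficiently small — the contributions from $\|x-x_0\|_2$ bounded away from $0$ are asymptotically negligible and get absorbed automatically by folding the domain into the indicator inside the dominated convergence argument. The only bookkeeping to watch is keeping the coercivity constant strictly positive, i.e. choosing the neighbourhood on which $\nabla^2 V$ stays above $\tfrac{\lambda}{2}\mathbf{I}_d$, so that the dominating function is integrable.
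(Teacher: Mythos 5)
Your proof is correct, but it follows a genuinely different route from the paper's. The paper works directly with a two-sided quadratic sandwich $\frac{1-\varepsilon}{2}(x-x_0)^\top H(x-x_0)\le V(x)\le\frac{1+\varepsilon}{2}(x-x_0)^\top H(x-x_0)$ on a $\delta$-ball, orthogonally diagonalizes $H=\nabla^2 V(x_0)$, changes variables, evaluates the resulting one-dimensional Gaussian integrals, and then squeezes the $\limsup$ and $\liminf$ of $t^{d/2}\int_{U_\delta} e^{-tV}$ between $\left(\frac{2\pi}{1\pm\varepsilon}\right)^{d/2}(\det H)^{-1/2}$ before sending $\varepsilon\to 0$. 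You instead rescale by $x\mapsto y/\sqrt{t}$ so that the prefactor $t^{d/2}$ cancels exactly, use a single one-sided quadratic lower bound $V(x)\ge\frac{\lambda}{4}\|x\|_2^2$ to manufacture an integrable majorant, and conclude in one stroke by dominated convergence together with the pointwise second-order Taylor limit $tV(y/\sqrt t)\to\frac12 y^\top H y$. Your version avoids both the explicit diagonalization (replaced by the standard $\int e^{-\frac12 y^\top Hy}\,dy=(2\pi)^{d/2}(\det H)^{-1/2}$) and the $\varepsilon$-bookkeeping; the trade-off is that you must check the uniformity needed for domination, which you do correctly by forcing $\delta\le\delta_0$ so the indicator's support stays inside the coercivity neighbourhood. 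Both arguments are sound; yours is the more compact and modern phrasing of the same Laplace asymptotics.
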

\begin{proof}
For each $\varepsilon \in (0,1)$, there exists 
$\delta>0$ such that
\begin{equation*}
\frac{(1-\varepsilon)(x-x_0)^{\top}\nabla^2 V(x_0)(x-x_0)}{2} \le V(x)\le \frac{(1+\varepsilon)(x-x_0)^{\top}\nabla^2 V(x_0)(x-x_0)}{2}
\end{equation*}
holds for any $x\in U_{\delta}$. Moreover, we have
\begin{equation}\label{eq 11}
\int_{U_{\delta}(x_0)} \exp\left(-tV(x)\right)dx \le \int_{U_{\delta}(x_0)} \exp\left(-\frac t 2 (1-\varepsilon)(x-x_0)^{\top}\nabla^2V(x_0)(x-x_0) \right) dx.
\end{equation}
There is an orthogonal matrix $P$ such taht $P^{\top} \nabla^2 V(x_0)P=\mathrm{diag}(\lambda_1,\ldots,\lambda_d)$, where $\lambda_1, \ldots, \lambda_d$ are the eigenvalues of Hessian matrix $\nabla^2 V(x_0)$. And we denote $(y_1,\ldots,y_d) :=P(x-x_0)$, then  $$(x-x_0)^{\top}\nabla^2V(x_0)(x-x_0)=\sum_{i=1}^{d}\lambda_i(y_i)^2.$$ 
Hence
\begin{align}\label{eq 12}
 \int_{U_{\delta}(x_0)} \exp\left(-\frac t 2 (1-\varepsilon)(x-x_0)^{\top}\nabla^2V(x_0)(x-x_0) \right) dx
 =\int_{U_{\delta}(0)} \exp\left(-\frac{t(1-\varepsilon)}{2}\sum_{i=1}^{d}\lambda_i(y_i)^2 \right) dy.
\end{align}
Further, we can get
\begin{equation}\label{eq 13}
\int_{U_{\delta}(0)} \exp\left(-\frac{t(1-\varepsilon)}{2}\sum_{i=1}^{d}\lambda_i(y_i)^2 \right) dy= \left(\frac{2}{t(1-\varepsilon)} \right)^{\frac d 2} \int_{\|z \|_2\le \sqrt{\frac{t(1-\varepsilon)}{2}}\delta} \exp\left(-\sum_{i=1}^d \lambda_i(z_i)^2 \right)dz,
\end{equation}
where the  equality  holds by 
setting $z=\sqrt{\dfrac{t(1-\varepsilon)}{2}}y$. 
By \eqref{eq 11}, \eqref{eq 12} and \eqref{eq 13},  it follows that
\begin{align*}
\limsup_{t\to +\infty} t^{\frac d 2} \int_{U_{\delta}(x_0)} e^{-t V(x)} dx &\le \left(\frac{2}{1-\varepsilon} \right)^{\frac d 2}\limsup_{t\to +\infty} \int_{\|z \|_2 <\sqrt{\frac{t(1-\varepsilon)}{2}}\delta} \exp\left(-\sum_{i=1}^d \lambda_i(z_i)^2 \right)dz \\
& \le \left(\frac{2}{1-\varepsilon} \right)^{\frac d 2} \int_{\mathbb R^d} \exp\left(-\sum_{i=1}^d \lambda_i(z_i)^2 \right)dz_1 \cdots dz_d \\
&= \left(\frac{2\pi}{1-\varepsilon} \right)^{\frac d 2} \left(\prod_{i=1}^d \frac{1}{\sqrt{\lambda_i}}\right)=\left(\frac{2\pi}{1-\varepsilon} \right)^{\frac d 2} \frac{1}{\left(\det \nabla^2 V(x_0) \right)^{\frac 1 2}}.
\end{align*}
Similarly, we have
\begin{align*}
\liminf_{t\to +\infty} t^{\frac d 2} \int_{U_{\delta}(x_0)} e^{-t V(x)} dx &\ge \left(\frac{2}{1+\varepsilon} \right)^{\frac d 2}\liminf_{t\to +\infty} \int_{\|z \|_2 <\sqrt{\frac{t(1+\varepsilon)}{2}}\delta} \exp\left(-\sum_{i=1}^d \lambda_i(z_i)^2 \right)dz \\
& \ge \left(\frac{2}{1+\varepsilon} \right)^{\frac d 2} \int_{\mathbb R^d} \exp\left(-\sum_{i=1}^d \lambda_i(z_i)^2 \right)dz_1 \cdots dz_d \\
&= \left(\frac{2\pi}{1+\varepsilon} \right)^{\frac d 2} \left(\prod_{i=1}^d \frac{1}{\sqrt{\lambda_i}}\right)=\left(\frac{2\pi}{1+\varepsilon} \right)^{\frac d 2} \frac{1}{\left(\det \nabla^2 V(x_0) \right)^{\frac 1 2}}.
\end{align*}
Therefore, letting $\varepsilon \rightarrow 0^{+}$, we get
\begin{equation*}
\lim_{t \to +\infty} t^{\frac d 2} \int_{U_{\delta}(x_0)} e^{-t V(x)} dx=(2\pi)^{\frac d 2} \left( \prod_{i=1}^d \frac{1}{\sqrt{\lambda_i}}\right)=\frac{(2\pi)^{\frac d 2}}{\left(\det \nabla^2 V(x_0) \right)^{\frac 1 2}}.
\end{equation*}
\end{proof}
\begin{lemma}\label{lemma 3.2}
Let $X=(X_t,\mathcal F_t), Y=(Y_t,\mathcal F_t)$ be strong solutions of the following two stochastic differential equations
\begin{align*}
& dX_t=a(X_t,t)dt+ \sigma dB_t, \quad t\in [0,1] \\
& dY_t=b(Y_t,t)dt+ \sigma dB_t, \quad Y_0=X_0,~ t\in [0,1],
\end{align*}
and $X_0$ is a $\mathcal F_0$-measureable random variable. In addition, if drift terms $a(X_t,t)$ and $b(X_t,t)$ satisfy $\mathbb E \left[ \exp\left( \int_0^1 \|a(X_t,t)\|_2^2 + \|b(X_t,t) \|_2^2 dt\right) \right]< \infty$, then we have
\begin{equation}\label{equ 1}
\frac{d\mathbb P_Y}{d\mathbb P_X}(X)=\exp\left(\sigma^{-1} \int_0^1 \left\langle b(X_t,t)-a(X_t,t), dB_t\right\rangle -\frac{1}{2\sigma^2} \int_0^1 \|b(X_t,t)-a(X_t,t) \|_2^2 dt \right),
\end{equation}
and the relative entropy of $\mathbb P_X$ with respect to $\mathbb P_Y$ satisfies
\begin{equation*}
\mathbb{D}_{\mathrm{KL}}(\mathbb P_X || \mathbb P_Y)
 = \frac{1}{2\sigma^2} \int_0^1 \mathbb E \left[  \|b(X_t,t)-a(X_t,t) \|_2^2\right] dt,
\end{equation*}
where probability distributions $\mathbb P_X, \mathbb P_Y$ are induced by process $(X_t, 0\le t \le 1)$ and $(Y_t, 0\le t\le 1)$, respectively.
\end{lemma}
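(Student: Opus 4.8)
\emph{Proof proposal.} The statement is a quantitative form of Girsanov's theorem for two It\^o processes sharing the diffusion coefficient $\sigma\,dB_t$ but having different drifts, so the plan is: (i) realise $\mathbb P_Y$ as a measure change of $\mathbb P_X$ via an exponential martingale, (ii) identify that martingale with the right-hand side of \eqref{equ 1}, and (iii) read off the relative entropy by taking logarithms. Working on the underlying filtered space carrying $B$, set $\theta_t:=\sigma^{-1}\big(b(X_t,t)-a(X_t,t)\big)$ and let
$$
\mathcal E_t:=\exp\Big(\int_0^t\langle\theta_s,dB_s\rangle-\tfrac12\int_0^t\|\theta_s\|_2^2\,ds\Big),\qquad t\in[0,1].
$$
Since $X$ satisfies $B_t=\sigma^{-1}\big(X_t-X_0-\int_0^t a(X_s,s)\,ds\big)$, the random variable $\mathcal E_1$ is a measurable functional of the path $(X_t)_{t\in[0,1]}$, and unwinding the definitions of $\theta$ and $B$ shows that this functional evaluated along $X$ is exactly the right-hand side of \eqref{equ 1}; it then remains to prove that this functional equals $d\mathbb P_Y/d\mathbb P_X$.

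The main obstacle is verifying that $\mathcal E_t$ is a true martingale (not merely a local one), which is precisely what the hypothesis $\mathbb E\big[\exp(\int_0^1\|a(X_t,t)\|_2^2+\|b(X_t,t)\|_2^2\,dt)\big]<\infty$ is for: since $\|\theta_s\|_2^2\le 2\sigma^{-2}\big(\|a(X_s,s)\|_2^2+\|b(X_s,s)\|_2^2\big)$, this moment bound supplies Novikov's condition $\mathbb E\big[\exp(\tfrac12\int_0^1\|\theta_s\|_2^2\,ds)\big]<\infty$ (in the paper's setting the drift is bounded, so this holds trivially), hence $\mathbb E[\mathcal E_1]=1$ and $d\mathbb Q:=\mathcal E_1\,d\mathbb P$ defines a probability measure. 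By Girsanov's theorem $\widetilde B_t:=B_t-\int_0^t\theta_s\,ds$ is a $\mathbb Q$-Brownian motion, and substituting $\sigma\,dB_t=\sigma\,d\widetilde B_t+(b(X_t,t)-a(X_t,t))\,dt$ into the $X$-equation gives $dX_t=b(X_t,t)\,dt+\sigma\,d\widetilde B_t$ with $X_0\sim\mathrm{Law}(Y_0)$ under $\mathbb Q$. By weak uniqueness for this SDE (guaranteed by the linear growth and Lipschitz bounds \eqref{cond1}, \eqref{cond2}) the $\mathbb Q$-law of $(X_t)_{t\in[0,1]}$ equals $\mathbb P_Y$; since $\mathcal E_1$ is a functional of $(X_t)_{t\in[0,1]}$, testing against bounded functionals $\Phi$ gives $\int\Phi\,\mathcal E_1\,d\mathbb P_X=\mathbb E_{\mathbb Q}[\Phi]=\int\Phi\,d\mathbb P_Y$, i.e. $d\mathbb P_Y/d\mathbb P_X=\mathcal E_1$, which is \eqref{equ 1}.

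Finally, for the relative entropy I would write $\mathbb D_{\mathrm{KL}}(\mathbb P_X\|\mathbb P_Y)=\mathbb E_{\mathbb P_X}\big[\log(d\mathbb P_X/d\mathbb P_Y)\big]=-\mathbb E\big[\log\mathcal E_1\big]$, so that
$$
\mathbb D_{\mathrm{KL}}(\mathbb P_X\|\mathbb P_Y)=-\,\mathbb E\!\left[\sigma^{-1}\!\int_0^1\!\langle b(X_t,t)-a(X_t,t),dB_t\rangle\right]+\frac{1}{2\sigma^2}\,\mathbb E\!\left[\int_0^1\!\|b(X_t,t)-a(X_t,t)\|_2^2\,dt\right].
$$
The hypothesis implies $\mathbb E\big[\int_0^1\|b(X_t,t)-a(X_t,t)\|_2^2\,dt\big]<\infty$ (using $x\le e^x$), hence $t\mapsto\int_0^t\langle b(X_s,s)-a(X_s,s),dB_s\rangle$ is an $L^2$-bounded martingale with zero expectation and the first term drops; Fubini's theorem then moves the expectation inside the time integral to yield $\mathbb D_{\mathrm{KL}}(\mathbb P_X\|\mathbb P_Y)=\frac{1}{2\sigma^2}\int_0^1\mathbb E\big[\|b(X_t,t)-a(X_t,t)\|_2^2\big]\,dt$, as claimed.
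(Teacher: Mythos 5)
Your proof is correct and follows essentially the same route as the paper's: verify Novikov's condition so the exponential local martingale is a true martingale, invoke Girsanov's theorem to identify the tilted law of $X$ with $\mathbb P_Y$, and obtain the relative entropy by taking the negative expected logarithm of the Radon--Nikodym derivative and dropping the mean-zero stochastic integral. You are somewhat more explicit than the paper about why $\mathcal E_1$ is a functional of the path $X$, the appeal to weak uniqueness when concluding $\mathbb Q_X=\mathbb P_Y$, and the square-integrability (from $x\le e^x$) that justifies the vanishing of the stochastic-integral term, but these are refinements of the same argument rather than a different one.
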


\begin{proof}
By the Novikov condition \citep{revuz2013continuous}, 
we know that
$$M_t :=\exp\left(\sigma^{-1} \int_0^t \left\langle b(X_u,u)-a(X_u,u), dB_u\right\rangle -\frac{1}{2\sigma^2} \int_0^t \|b(X_u,u)-a(X_u,u) \|_2^2 du \right)$$
 is exponential martingale and $\mathbb E M_t=1$ for all $t \in [0,1]$. We can denote a new probability measure $\mathbb{Q}$ such that $d\mathbb{Q}=M_1 d\mathbb P$. 
 By Girsanov's theorem \citep{revuz2013continuous}, 
 under the new probability measure $\mathbb Q$, we can conclude that
\begin{equation*}
\widetilde B_t:=B_t- \sigma^{-1} \int_0^t (b(X_u,u)-a(X_u,u))du
\end{equation*}
is a $\mathbb Q$-Brownian motion. Hence, under the new probability measure $\mathbb Q$,
\begin{align*}
b(X_t,t)dt +\sigma d\widetilde B_t &=b(X_t,t)dt +\sigma dB_t -(b(X_t,t)-a(X_t,t))dt \\
                            &=a(X_t,t)dt+ \sigma dB_t=dX_t.
\end{align*}
Thus, we have $\mathbb Q_X=\mathbb P_Y$, where $\mathbb Q_X$ is the distribution of $X$ under the measure $\mathbb Q$. Furthermore, we can obtain \eqref{equ 1}. On the other hand, by the definition of realtive entropy of $\mathbb P_X$ with respect to $\mathbb P_Y$, we have
\begin{equation*}
\mathbb{D}_{\mathrm{KL}}(\mathbb P_X || \mathbb P_Y)
=\mathbb E\left[-\log \left( \frac{d\mathbb P_Y}{d\mathbb P_X}(X)\right) \right]=\frac{1}{2\sigma^2} \int_0^1 \mathbb E \left[\|b(X_t,t)-a(X_t,t) \|_2^2 \right]  dt.
\end{equation*}
Therefore, the proof of Lemma \ref{lemma 3.2} is completed.
\end{proof}

\subsection{Proof of Theorem \ref{th1}}
\begin{proof}
The result can be traced back to the 1980s. For the overall continuity of the article, we use the Laplace's method in
\cite{hwang1980laplace,hwang1981generalization}
to give a detailed proof of the result. The key idea is to prove that for each $\delta'>0$  small enough, $\mu_{\sigma}(\{x;\|x-x_i^*\|_2 <\delta' \})$ converges to $\dfrac{\left(\det \nabla^2 V(x_i^*) \right)^{-\frac 1 2}}{\sum_{j=1}^\kappa \left(\det \nabla^2 V(x_j^*) \right) ^{-\frac 1 2}}$ as $\sigma \downarrow 0$. We firstly introduce the following notations
\begin{align*}
& a(\delta'):=\inf\{V(x);\|x-x_i^* \|_2 \ge \delta', 1\le i\le \kappa \},\\
& \widetilde{m}_i(\sigma, \delta'):= \int_{\|x-x_i^* \|_2 <\delta'} \exp\left(-\frac{V(x)}{\sigma} \right)dx, \quad 1\le i \le \kappa, \\
& \widetilde{m}(\sigma,\delta'):=\int_{\bigcup_{i=1}^\kappa \|x-x_i^* \|\ge \delta'} \exp\left(-\frac{V(x)}{\sigma} \right)dx.
\end{align*}
 Hence, we have
 \begin{equation}\label{eq 14}
 \mu_{\sigma}(\{x;\|x-x_i^*\|_2 <\delta' \})=\dfrac{\int_{\|x-x_i^* \|_2< \delta'}\exp\left(-\frac{V(x)}{\sigma} \right)dx}{\int_{\mathbb R^d}\exp\left(-\frac{V(x)}{\sigma} \right)dx}=\frac{\widetilde{m}_i(\sigma,\delta')}{\sum_{j=1}^\kappa \widetilde{m}_j(\sigma,\delta') +\widetilde{m}(\sigma,\delta')}.
 \end{equation}
 On the one hand, $\nabla^2 V(x_i^*)$ is a positive definite symmetric matrix. For any $\varepsilon \in (0,1)$, we can choose $0 <\delta' <\varepsilon$ such that
\begin{equation*}
 \frac{(x-x_i^*)^{\top}(\nabla^2 V(x_i^*)-\varepsilon \bI_d)(x-x_i^*)}{2} \le V(x)-V(x_i^*) \le  \frac{(x-x_i^*)^{\top}(\nabla^2 V(x_i^*)+\varepsilon \bI_d)(x-x_i^*)}{2}
\end{equation*}
holds for any $\|x-x_i^* \|_2<\delta'$. Thus, for any $i=1,\ldots, \kappa$, we obtain
 \begin{align*}
 &(2\pi \sigma)^{-\frac d 2} e^{\frac{V(x_i^*)}{\sigma}}\widetilde{m}_i(\sigma,\delta') \le (2\pi \sigma)^{-\frac d 2}\int_{\|x-x_i^*\|_2 <\delta'} \exp\left(-\frac{(x-x_i^*)^{\top}(\nabla^2 V(x_i^*)-\varepsilon \bI_d)(x-x_i^*)}{2\sigma} \right)dx, \\
 & (2\pi \sigma)^{-\frac d 2} e^{\frac{V(x_i^*)}{\sigma}}\widetilde{m}_i(\sigma,\delta') \ge (2\pi \sigma)^{-\frac d 2}\int_{\|x-x_i^*\|_2 <\delta'} \exp\left(-\frac{(x-x_i^*)^{\top}(\nabla^2 V(x_i)+\varepsilon \bI_d)(x-x_i^*)}{2\sigma} \right)dx.
 \end{align*}
 By Lemma \ref{lemma 2.1} and letting $\sigma \rightarrow 0$, we have
 \begin{align*}
 \left(\det(\nabla^2 V(x_i^*)+\varepsilon \bI_d)\right)^{-\frac 1 2} &\le \liminf_{\sigma \to 0} (2\pi \sigma)^{-\frac d 2} e^{\frac{V(x_i^*)}{\sigma}}\widetilde{m}_i(\sigma,\delta') \notag \\
 & \le \limsup_{\sigma \to 0} (2\pi \sigma)^{-\frac d 2} e^{\frac{V(x_i^*)}{\sigma}}\widetilde{m}_i(\sigma,\delta') \notag \\
 & \le \left(\det(\nabla^2 V(x_i^*)-\varepsilon \bI_d)\right)^{-\frac 1 2}.
 \end{align*}
 As $\varepsilon \downarrow 0$, we get
 \begin{equation}\label{eq 15}
 \lim_{\sigma \to 0}(2\pi \sigma)^{-\frac d 2} \exp\left( \frac{V(x_i^*)}{\sigma}\right) \widetilde{m}_i(\sigma,\delta')=\left(\det \nabla^2 V(x_i^*) \right)^{-\frac 1 2}.
 \end{equation}
 On the other hand, we have
 \begin{align*}
(2\pi \sigma)^{-\frac d 2} \exp\left( \frac{V(x_i^*)}{\sigma}\right) \widetilde{m}(\sigma,\delta') & =(2\pi \sigma)^{-\frac d 2}\exp \left(-\frac{a(\delta')-V(x_i^*)}{\sigma}\right) \notag \\
& \qquad \quad \times \int_{\bigcup_{i=1}^\kappa \|x-x_i^* \|\ge \delta'} \exp\left(-\frac{V(x)-a(\delta')}{\sigma} \right)dx.
 \end{align*}
 Since $a(\delta'):=\inf\{V(x);\|x-x_i^* \|_2 \ge \delta', 1\le i\le \kappa \}>V(x_i^*)$ and $V(x) \ge a(\delta')$ holds in $\{x\in \mathbb{R}^d: \|x-x_i^*\|_2 \ge \delta'\}$, then for any $\sigma \in (0,1]$, we have
 \begin{align*}
 \int_{\bigcup_{i=1}^\kappa \|x-x_i^* \| \ge \delta'} \exp\left(-\frac{V(x)-a(\delta')}{\sigma} \right)dx &\le \int_{\bigcup_{i=1}^\kappa \|x-x_i^* \|\ge \delta'} \exp\left(-(V(x)-a(\delta')) \right)dx \\
 & \le \exp(a(\delta'))  \int_{\mathbb R^d} \exp(-V(x)) dx< \infty.
 \end{align*}
 Also, it follows that
 \begin{equation*}
 (2\pi \sigma)^{-\frac d 2}\exp \left(-\frac{a(\delta')-V(x_i^*)}{\sigma}\right) \rightarrow 0 \quad \text{as $\sigma \downarrow 0.$}
 \end{equation*}
 Thus we get
 \begin{equation}\label{eq 16}
 \lim_{\sigma \to 0} (2\pi \sigma)^{-\frac d 2} \exp\left( \frac{V(x_i^*)}{\sigma}\right) \widetilde{m}(\sigma,\delta')=0.
 \end{equation}
 Taking limit $\sigma \downarrow 0$ in \eqref{eq 14} and applying \eqref{eq 15}, \eqref{eq 16}, we get
 \begin{equation*}
\mu_{\sigma}(\{x;\|x-x_i^*\|_2 <\delta' \}) \rightarrow \frac{\left(\det \nabla^2 V(x_i^*) \right)^{-\frac 1 2}}{\sum_{j=1}^\kappa \left(\det \nabla^2 V(x_j^*) \right) ^{-\frac 1 2}} \quad \text{as $\sigma \downarrow 0.$}
 \end{equation*}
 Therefore, the proof Theorem \ref{th1} is completed.
\end{proof}


\subsection{Proof of Theorem \ref{th2}}

\begin{proof}
Note that
\begin{equation}\label{eq 1}
\mathbb P(V(X_1)>\tau)=\frac{\int_{V(x)>\tau} \exp(-V(x)/\sigma) dx}{\int_{\mathbb R^d} \exp(-V(x)/\sigma) dx}.
\end{equation}
According to $V(x)=\|x \|^2_2/2$ for any $\|x \|_2 \geq R$, then $V$ has  at least linear growth at infinity, that is, there exists  a constant $C>0$ such that for $R^{\star}$ large enough
\begin{equation*}
V(x) \ge \min_{\|y \|_2=R^{\star}} V(y)+ C(\|x \|_2 -R^{\star}) \quad \text{ for $\|x \|_2>R^{\star}$.}
\end{equation*}
We can choose sufficiently large $R^{\star}$ such that $\min_{\|y \|_2 =R^{\star}} V(y) >\tau$. Hence,
\begin{align}\label{eq 2}
\int_{V(x)\ge \tau} \exp(-V(x)/\sigma) dx &=\int_{V(x)\ge \tau, \|x\|_2 \le R^{\star}} \exp(-V(x)/\sigma) dx +\int_{V(x)\ge \tau, \|x \|_2>R^{\star}} \exp(-V(x)/\sigma) dx \notag \\
&\le \exp\left(-\frac{\tau}{\sigma} \right) \text{Vol}(B_{R^{\star}}) +\int_{V(x)\ge \tau, \|x \|_2 >R^{\star}} \exp\left(- \frac{\tau + C(\|x \|_2-R^{\star})}{\sigma} \right) dx \notag \\
& \le \exp\left(-\frac{\tau}{\sigma}\right) \left(\text{Vol}(B_{R^{\star}}) +d\text{Vol}(B_1) \int_{R^{\star}}^{\infty} r^{d-1} \exp\left\lbrace -C(r-R^{\star})\right\rbrace  dr  \right) \notag\\
& \le 2\text{Vol}(B_{R^{\star}}) \exp\left(-\frac{\tau}{\sigma}\right),
\end{align}
where Vol$(B_{R^{\star}})$ is the volume of a ball with radius $R^{\star}$. On the other hand, since $\min_x V(x) =0$, then there exists $r > 0$ such that $V(x)< \varepsilon$ when $\|x \|_2 < r$, we have
\begin{equation}\label{eq 3}
\int_{\mathbb R^d} \exp(-V(x)/\sigma) dx \ge \int_{\|x\|_2<r} \exp(-V(x)/\sigma) dx>  \exp\left(-\frac{\varepsilon}{\sigma}\right)\text{Vol}(B_r).
\end{equation}
By injection \eqref{eq 2}, \eqref{eq 3} into \eqref{eq 1}, we get
\begin{equation*}
\mathbb P(V(X_1)>\tau)\leq C_{\tau,\varepsilon,d} \exp\left(-\frac{\tau-\varepsilon}{\sigma} \right),
\end{equation*}
where
\begin{equation}\label{Const1}
C_{\tau,\varepsilon,d} := \frac{2\text{Vol}(B_{R^{\star}})}{\text{Vol}(B_r)}.
\end{equation}

Next, we will prove that the second conclusion holds in the discrete case. Recall that $s=1/K$ is the step size,  $t_k:=ks$ is the cumulative step size up to iteration $k$, and  $\{X_t\}_{t\in [0,1]}$ is the    Schr\"{o}dinger-F\"{o}llmer diffusion process \eqref{sch-equation}.
Let $\mu_{t_k}$ be the probability measure of  $Y_{t_k} $ defined by \eqref{emd}. 
Then for fixed $\tau>0$, we have
\begin{align}\label{eq 17}
\mathbb P(V(\widetilde{Y}_{t_K})>\tau)&\le  \mathbb P(V(\widetilde{Y}_{t_K})>\tau)+|\mathbb P(V(\widetilde{Y}_{t_K})>\tau) -\mathbb P(V(Y_{t_K}) > \tau)| \notag \\
&\le \mathbb P(V(Y_{t_K})>\tau)+ \|\widetilde{Y}_{t_K}-Y_{t_K} \|_{TV} \notag  \\
& \le \mathbb P(V(X_{t_K})>\tau) +\|X_{t_K}-Y_{t_K} \|_{TV}+\|\widetilde{Y}_{t_K}-Y_{t_K} \|_{TV} \notag \\
& \le  \mathbb P(V(X_{t_K})>\tau)+ \sqrt{2 \mathbb{D}_{\mathrm{KL}}(\mu_{t_K}||\mu_{\sigma})} + \sqrt{2 \mathbb{D}_{\mathrm{KL}}(\text{Law}(\widetilde Y_{t_K})||\mu_{t_K})},
\end{align}
where we use Pinsker's inequality \citep{bakry2014analysis} in the last inequality and  the second inequality holds due to the fact that letting $g(x):=\mathbbm{1}_{V(x)>\tau}$, then
\begin{align*}
|\mathbb P(V(\widetilde{Y}_{t_K}) >\tau)-\mathbb P(V(Y_{t_K}) > \tau)| &=\left|\mathbb E\left(g(\widetilde Y_{t_K}) - g( Y_{t_K}) \right) \right| \\
&= \left|\int_{\mathbb R^d} g(x)d \left(\mathbb P_{\widetilde Y_{t_k}}(x)-\mathbb P_{Y_{t_k}}(x) \right) \right|  \\
& \le |\mathbb P_{\widetilde Y_{t_k}}-\mathbb P_{Y_{t_k}} |(\mathbb R^d)
 = \|\widetilde{Y}_{t_K}-Y_{t_K} \|_{TV},
\end{align*}
where the total variation metric between two probability measures $\mu,\nu$ on $\mathbb R^d$ is defined by $\|\mu-\nu \|_{TV} :=\left|\mu-\nu \right|(\mathbb R^d)=2\sup_{A \subseteq \mathbb R^d} |\mu(A)-\nu(A)|$.

Firstly, from the first part of proof, we can get a bound for the first term on the right hand side of \eqref{eq 17}. That is, for each $\varepsilon \in (0, \tau)$, there exists a constant $C_{\tau,\varepsilon,d}$ defined in \eqref{Const1} such that
\begin{equation}\label{eq 21}
\mathbb P(V(X_{t_K})>\tau) \le C_{\tau,\varepsilon,d} \exp\left(-\frac{\tau-\varepsilon}{\sigma} \right).
\end{equation}

 Secondly, we estimate the boundness of $\mathbb{D}_{\mathrm{KL}}(\mu_{t_K}||\mu_{\sigma})$. To make use of continuous-time tools, we  construct a continuous-time interpolation for the discrete-time algorithm \eqref{emd}.  In particular, we define a stochastic process $\{Y_t \}_{t\in [0,1]}$ via SDE
 \begin{equation}\label{equation}
 dY_t=\sigma \hat b(Y_t,t)dt + \sqrt{\sigma} dB_t, \quad t \in [0,1], Y_0=0,
 \end{equation}
 with the non-anticipative drift $\hat b(Y_t,t):=\sum\limits_{k=0}^{K-1}b(Y_{t_k},t_k) \mathbbm{1}_{[t_k, t_{k+1})}(t)$. Meanwhile, because of Proposition \ref{SBP}, the process $\{X_t\}_{t\in [0,1]}$  \eqref{sch-equation}  satisfies $X_1 \sim \mu_{\sigma}$. We also denote by $\mu_1^x$ and $\nu_1^y$ the marginal distributions on $C([0,1],\mathbb R^d)$ of $(X_t,Y_t)_{t\in [0,1]}$. Thus, combining \eqref{emd}, \eqref{equation} and Lemma \ref{lemma 3.2}, we obtain
 \begin{align}\label{eq 20}
 \mathbb{D}_{\mathrm{KL}}(\mu_{t_K}||\mu_{\sigma})& \le \mathbb{D}_{\mathrm{KL}}(\nu^y_1||\mu^x_1) =\frac \sigma 2 \int_0^1 \mathbb E \left( \|b(Y_t,t)-\hat b(Y_t,t) \|_2^2\right) dt \notag \\
 &=\frac \sigma 2 \sum_{k=0}^{K-1}\int_{t_k}^{t_{k+1}} \mathbb E\left( \|b(Y_t,t)-b(Y_{t_k},t_k) \|_2^2 \right)dt \notag \\
 & \le \sigma d C_{1,\sigma}^2 \sum_{k=0}^{K-1}\int_{t_k}^{t_{k+1}} \mathbb E\left(\|Y_t-Y_{t_k} \|_2^2 +(t-t_k) \right) dt \notag \\
 &= \sigma d C_{1,\sigma}^2 \left[ \sum_{k=0}^{K-1}\int_{t_k}^{t_{k+1}} \mathbb E\left(\|b(Y_{t_k})(t-t_k)\sigma + \sqrt{\sigma} (B_t-B_{t_k}) \|_2^2 \right) dt +\frac{1}{2K} \right] \notag \\
 & \le 2\sigma d C_{1,\sigma}^2 \sum_{k=0}^{K-1}\int_{t_k}^{t_{k+1}} \mathbb E\left(\|b(Y_{t_k},t_k) \|_2^2 \sigma^2(t-t_k)^2 + \sigma d(t-t_k) \right) dt + \frac{\sigma d}{2K}  C_{1,\sigma}^2 \notag \\
 &  \le 2 d C_{1,\sigma}^2 \sum_{k=0}^{K-1}\int_{t_k}^{t_{k+1}} \left[\left(\frac{\gamma_{\sigma}}{\xi_{\sigma}} \right)^2 \sigma^3 (t-t_k)^2 +d(t-t_k)\sigma^2 \right]dt +\frac{\sigma d}{2K} C_{1,\sigma}^2 \notag \\
 & \le \frac{4d \sigma^3}{3K^2} \left( \frac{\gamma^4_{\sigma}}{\xi^4_{\sigma}} + \frac{\gamma^6_{\sigma}}{\xi^6_{\sigma}}  \right) +\frac{\sigma d(2d\sigma +1)}{K}\left( \frac{\gamma^2_{\sigma}}{\xi^2_{\sigma}} + \frac{\gamma^4_{\sigma}}{\xi^4_{\sigma}}  \right) \le \frac{d(2d+3)}{K} C^{\star}_{1,\sigma}, 
 \end{align}
where
\begin{equation*}
C^{\star}_{1,\sigma} :=\frac{\gamma^2_{\sigma}}{\xi^2_{\sigma}}+\frac{\gamma^4_{\sigma}}{\xi^4_{\sigma}}+\frac{\gamma^6_{\sigma}}{\xi^6_{\sigma}}, \quad \frac{\gamma_{\sigma}}{\xi_{\sigma}}=\left\lbrace \left(\frac{M_{2,R}}{\sigma} \right)^2 +\frac{M_{3,R}}{\sigma} \right \rbrace \exp\left(\frac{M_{1,R}-m_{1,R}}{\sigma} \right),
\end{equation*}
the second inequality holds 
due to Remark 4.1 in \cite{sfsHuang} and the fact that $(a+b)^2 \le 2(a^2 +b^2)$, the second equality holds by the continuous-time interpolation  \eqref{equation}, and the fourth inequality follows from $\|b(x,t) \|_2^2 \le \gamma_{\sigma}^2/ \xi_{\sigma}^2$.

 So it remains to estimate the relative entropy $ \mathbb{D}_{\mathrm{KL}}( \text{Law}(\widetilde Y_{t_K})||\mu_{t_K})$. Similar to the proof of the relative entropy $\mathbb{D}_{\mathrm{KL}}(\mu_{t_K}||\mu_{\sigma})$, we need to construct  a continuous-time interpolation process $\{\widetilde Y_t \}_{t \in [0,1]}$ defined by
 \begin{equation}\label{eq 18}
 d\widetilde Y_t=\sigma \hat b_m(\widetilde Y_t,t)dt + \sqrt{\sigma} dB_t, \quad t \in [0,1], \quad \widetilde Y_0=0,
 \end{equation}
 with the non-anticipative drift $\hat b_m(\widetilde Y_t,t):=\sum\limits_{k=0}^{K-1}\widetilde b_m(\widetilde Y_{t_k},t_k) \mathbbm{1}_{[t_k, t_{k+1})}(t)$, where $\widetilde b_m(\widetilde Y_k,t_k)$ is defined by \eqref{drifte1} or \eqref{drifte2}. Denote by $\nu_1^y$ and $\widetilde \nu_1^y$ the marginal distributions on $C([0,1],\mathbb R^d)$ of $(Y_t,\widetilde Y_t)_{t\in [0,1]}$. Therefore, combining \eqref{equation}, \eqref{eq 18}, Lemma \ref{lemma 3.2} and Lemma \ref{lemma6}, we get
 \begin{align}\label{eq 19}
\mathbb{D}_{\mathrm{KL}}( \text{Law}(\widetilde Y_{t_K})||\mu_{t_K}) \le \mathbb{D}_{\mathrm{KL}}(\widetilde \nu_1^y || \nu^y_1)
 &=\frac \sigma 2 \int_0^1 \mathbb E\left( \|\hat b(\widetilde Y_t,t)- \hat b_m(\widetilde Y_t,t) \|_2^2\right) dt \notag \\
 &=\frac \sigma 2 \sum_{k=0}^{K-1}\int_{t_k}^{t_{k+1}} \mathbb E\left( \| b(\widetilde Y_{t_k},t_k)- \widetilde b_m(\widetilde Y_{t_k},t_k) \|_2^2\right) dt \notag \\
  & \leq \frac{4d \sigma}{m} C^{\star}_{2,\sigma},
 \end{align}
 where
 \begin{equation*}
 C^{\star}_{2,\sigma} :=\frac{\gamma^4_{\sigma}}{\xi^4_{\sigma}} +\frac{\gamma_{\sigma}^2 \zeta^2_{\sigma}}{\xi^4_{\sigma}}, \quad\frac{\gamma_{\sigma}}{\xi_{\sigma}}=\left\lbrace \left(\frac{M_{2,R}}{\sigma} \right)^2 +\frac{M_{3,R}}{\sigma} \right \rbrace \exp\left(\frac{M_{1,R}-m_{1,R}}{\sigma} \right),
\end{equation*}
with $\zeta_{\sigma}= \exp\left(M_{1,R}/ \sigma \right)$. By injecting \eqref{eq 21}, \eqref{eq 20}, and \eqref{eq 19} into \eqref{eq 17},  we can get the desired results.
\end{proof}

\subsection{Preliminary lemmas for Theorem \ref{th3}}
First, we introduce  Lemmas \ref{lemma2}-\ref{lemma6}  preparing for the
proofs of Theorem \ref{th3}.
\begin{lemma}\label{lemma2}
Assume (\textbf{P1}) and (\textbf{P2}) hold,
then
\begin{align*}
\Ebb \left[\|X_t\|_2^2 \right]\leq 2(C_{0,\sigma}+d)\exp(2C_{0,\sigma} t).
\end{align*}
\end{lemma}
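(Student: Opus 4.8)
The plan is to bound $u(t) := \Ebb[\|X_t\|_2^2]$ directly from the integral form of the Schr\"odinger--F\"ollmer SDE \eqref{sch-equation} and then close the estimate via Gr\"onwall's inequality, using only the linear growth condition \eqref{cond1} (which holds under (\textbf{P1})--(\textbf{P2}), as shown in the proof of Proposition \ref{SBP}) together with $\sigma \in (0,1]$ and $t \in [0,1]$.

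First I would write $X_t = \sigma \int_0^t b(X_s,s)\,ds + \sqrt{\sigma}\,B_t$ and apply $\|a+b\|_2^2 \le 2\|a\|_2^2 + 2\|b\|_2^2$ together with the Cauchy--Schwarz inequality in the form $\bigl\|\int_0^t g(s)\,ds\bigr\|_2^2 \le t\int_0^t \|g(s)\|_2^2\,ds$, obtaining
$$\|X_t\|_2^2 \le 2\sigma^2 t \int_0^t \|b(X_s,s)\|_2^2\,ds + 2\sigma \|B_t\|_2^2.$$
Taking expectations, using $\Ebb[\|B_t\|_2^2] = td$ and the linear growth bound \eqref{cond1}, and then using $\sigma \le 1$, $t \le 1$, yields
$$u(t) \le 2\sigma^2 t\, C_{0,\sigma}\int_0^t \bigl(1+u(s)\bigr)\,ds + 2\sigma t d \le 2(C_{0,\sigma}+d) + 2C_{0,\sigma}\int_0^t u(s)\,ds.$$
Gr\"onwall's inequality then gives $u(t) \le 2(C_{0,\sigma}+d)\exp(2C_{0,\sigma}t)$, which is exactly the claim.

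The only point requiring care is that Gr\"onwall's lemma presupposes $u$ is finite (and locally bounded) on $[0,1]$, which a priori is only guaranteed because \eqref{cond1}--\eqref{cond2} ensure the existence of a strong solution with finite second moments. To be fully rigorous I would first run the computation above with $X_t$ replaced by the stopped process $X_{t\wedge\tau_N}$, where $\tau_N := \inf\{t \ge 0 : \|X_t\|_2 \ge N\}$, so that every expectation in sight is finite; the optional stopping theorem kills any stochastic-integral contribution, the same chain of inequalities produces the bound $\Ebb[\|X_{t\wedge\tau_N}\|_2^2] \le 2(C_{0,\sigma}+d)\exp(2C_{0,\sigma}t)$ uniformly in $N$, and Fatou's lemma upon $N\to\infty$ delivers the stated estimate. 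This localization step is the main (and essentially the only) subtlety; the remaining manipulations are routine.
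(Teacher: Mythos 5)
Your proof is correct and follows essentially the same route as the paper's: integral form of the SDE, the elementary inequality $(a+b)^2 \le 2a^2+2b^2$ together with Cauchy--Schwarz, the linear-growth bound \eqref{cond1}, $\sigma\le 1$ and $t\le 1$, and then Gr\"onwall. The localization-via-stopping-time remark you add at the end is a sound extra precaution that the paper omits, but it does not change the argument.
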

\begin{proof}
From the definition of $X_t$ in \eqref{sch-equation}, we have
$
\|X_t\|_2\leq \sigma \int_{0}^t\|b(X_u,u)\|_2\mathrm{d}u+ \sqrt{\sigma} \|B_t\|_2.
$
Then, we can get
\begin{align*}
\|X_t\|_2^2&\leq
2 \sigma^2 \left(\int_{0}^t\|b(X_u,u)\|_2\mathrm{d}u\right)^2+2 \sigma \|B_t\|_2^2\\
&\leq
2t\int_{0}^t\|b(X_u,u)\|_2^2\mathrm{d}u+2\|B_t\|_2^2\\
&\leq
2t\int_{0}^tC_{0,\sigma} \left( \|X_u\|_2^2+1\right) \mathrm{d}u+2\|B_t\|_2^2,
\end{align*}
where the first inequality holds by the inequality $(a+c)^2\leq 2a^2+2c^2$, the last inequality holds by \eqref{cond1}.
Thus,
\begin{align*}
\Ebb \left[ \|X_t\|_2^2\right] &\leq
2t\int_{0}^tC_{0,\sigma} \left( \Ebb \left[ \|X_u\|_2^2\right] +1\right) \mathrm{d}u+2\Ebb \left[ \|B_t\|_2^2\right] \\
&\leq
2C_{0,\sigma} \int_{0}^t \Ebb \left[ \|X_u\|_2^2\right] \mathrm{d}u + 2(C_{0,\sigma} +d).
\end{align*}
By the Gr\"onwall inequality, we have
\begin{align*}
\Ebb \left[ \|X_t\|_2^2\right] \leq 2(C_{0,\sigma} +d)\exp(2C_{0,\sigma} t).
\end{align*}
\end{proof}
\begin{lemma}\label{lemma3}
Assume (\textbf{P1}) and (\textbf{P2}) hold,
then for any $0\leq t_1\leq t_2\leq 1$,
\begin{align*}
\Ebb \left[ \|X_{t_2}-X_{t_1}\|_2^2 \right] \leq
2(t_2-t_1) \left\lbrace 1 + C_{0,\sigma} \exp(2\sqrt{C_{0,\sigma}}+1)\right\rbrace.
\end{align*}
\end{lemma}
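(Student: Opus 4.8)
The plan is to estimate the increment directly from the mild (integral) form of the Schr\"{o}dinger--F\"{o}llmer SDE \eqref{sch-equation}. Writing
$$X_{t_2}-X_{t_1}=\sigma\int_{t_1}^{t_2}b(X_u,u)\,\mathrm{d}u+\sqrt{\sigma}\,(B_{t_2}-B_{t_1}),$$
I would apply $\|a+c\|_2^2\le 2\|a\|_2^2+2\|c\|_2^2$ to peel off the Brownian part, bound the drift integral by Cauchy--Schwarz, $\big\|\int_{t_1}^{t_2}b(X_u,u)\,\mathrm{d}u\big\|_2^2\le (t_2-t_1)\int_{t_1}^{t_2}\|b(X_u,u)\|_2^2\,\mathrm{d}u$, and take expectations. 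This gives
$$\Ebb\big[\|X_{t_2}-X_{t_1}\|_2^2\big]\le 2\sigma^2(t_2-t_1)\int_{t_1}^{t_2}\Ebb\big[\|b(X_u,u)\|_2^2\big]\,\mathrm{d}u+2\sigma\,\Ebb\big[\|B_{t_2}-B_{t_1}\|_2^2\big].$$

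For the Brownian term I would just use $\Ebb[\|B_{t_2}-B_{t_1}\|_2^2]=d(t_2-t_1)$ together with $\sigma\le 1$. For the drift term I would bound the integrand via the linear growth condition \eqref{cond1}, $\|b(X_u,u)\|_2^2\le C_{0,\sigma}(1+\|X_u\|_2^2)$, and then substitute the a priori second-moment estimate of Lemma \ref{lemma2}, $\Ebb[\|X_u\|_2^2]\le 2(C_{0,\sigma}+d)\exp(2C_{0,\sigma}u)$; since $u\in[0,1]$ one replaces $\exp(2C_{0,\sigma}u)$ by its value at $u=1$, so $\Ebb[\|b(X_u,u)\|_2^2]$ is controlled by a constant free of $u$, hence integrable in closed form. (Alternatively, since (\textbf{P1})--(\textbf{P2}) already give the uniform bound $\|b(x,u)\|_2\le\gamma_\sigma/\xi_\sigma$, one can skip Lemma \ref{lemma2} for this term and bound the drift integral by $(\gamma_\sigma/\xi_\sigma)^2(t_2-t_1)^2$ outright; I expect the precise form of the advertised constant comes from whichever of these two routes the authors take.)

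It then remains to collect constants: using $\sigma\in(0,1]$ and $0\le t_2-t_1\le 1$ to absorb the superfluous powers of $\sigma$ and of $t_2-t_1$ (so that, e.g., $\sigma^2(t_2-t_1)^2\le t_2-t_1$), and re-expressing the $\gamma_\sigma/\xi_\sigma$-quantities through $C_{0,\sigma}$, one lands at a bound of the shape $2(t_2-t_1)\{1+C_{0,\sigma}\exp(\,\cdot\,)\}$. I do not anticipate any structural obstacle here: the whole argument reduces to the boundedness/linear growth of $b$ coming from (\textbf{P1})--(\textbf{P2}) plus Lemma \ref{lemma2}, and the only real care needed is the bookkeeping that matches the constant to exactly the stated closed form.
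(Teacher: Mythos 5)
Your outer decomposition is the same as the paper's: split $X_{t_2}-X_{t_1}$ into drift plus martingale, use $\|a+c\|_2^2\le 2\|a\|_2^2+2\|c\|_2^2$, Cauchy--Schwarz on the time integral, and the linear growth bound \eqref{cond1}. Where you diverge is the second-moment estimate you feed in for $\Ebb[\|X_u\|_2^2]$. You offer two options — Lemma~\ref{lemma2}, whose bound is $2(C_{0,\sigma}+d)\exp(2C_{0,\sigma}u)$, or the uniform bound $\|b\|_2\le\gamma_\sigma/\xi_\sigma$ — but the paper actually uses neither. Instead it derives a fresh, time-uniform moment estimate: applying It\^o's formula to $1+\|X_t\|_2^2$ and combining \eqref{cond1} with $2ab\le a^2+b^2$ to show $\langle x,b(x,t)\rangle\le\sqrt{C_{0,\sigma}}\,(1+\|x\|_2^2)$, Gr\"onwall then gives $\Ebb[1+\|X_t\|_2^2]\le\exp\bigl(2\sqrt{C_{0,\sigma}}+1\bigr)$ for all $t\in[0,1]$. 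This is exactly where the $\sqrt{C_{0,\sigma}}$ in the exponent of the advertised constant comes from. If you use Lemma~\ref{lemma2} instead, you land on $\exp(2C_{0,\sigma})$ (no square root) plus an extra $(C_{0,\sigma}+d)$ prefactor, and if you use the uniform bound $\gamma_\sigma/\xi_\sigma$ you get an expression in $\gamma_\sigma,\xi_\sigma$ that is not re-expressed through $C_{0,\sigma}$; in neither case do you recover $2(t_2-t_1)\{1+C_{0,\sigma}\exp(2\sqrt{C_{0,\sigma}}+1)\}$ as stated. So there is a real gap: the lemma asserts a specific closed-form constant, and reaching it requires the $\langle x,b\rangle$ estimate and a separate Gr\"onwall step, not a call to Lemma~\ref{lemma2}.

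As a side remark, both you and the paper play a bit loose with dimensional constants in the Brownian term: $\Ebb[\|B_{t_2}-B_{t_1}\|_2^2]=d(t_2-t_1)$, so the noise contribution should carry a factor $d$, yet the stated bound has a bare $1$ in the braces. Your version correctly keeps the $d$; if you want to match the lemma exactly as printed you will find you cannot, which is another sign that the paper's displayed constants should be read up to dimensional prefactors rather than literally.
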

\begin{proof}
Using \eqref{cond1} and the elementary inequality $2ab \le a^2+b^2$, one can derive that for any $\varepsilon >0$,
\begin{equation*}
2\left\langle x, b(x,t) \right\rangle \le \varepsilon \|x \|^2_2 +\frac{\|b(x,t) \|^2_2}{\varepsilon} \le \varepsilon \|x \|^2_2 +\frac{C_{0,\sigma}}{\varepsilon}(1+ \|x \|^2_2).
\end{equation*}
Letting $\varepsilon=\sqrt{C_{0,\sigma}}$ yields
\begin{equation}\label{eq 24}
\left\langle x, b(x,t) \right\rangle \le \sqrt{C_{0,\sigma}} (1+\|x \|^2_2).
\end{equation}
From the definition of $X_t$ in \eqref{sch-equation}, we have
\begin{equation*}
X_t=\sigma \int_{0}^t b(X_s,s) ds + \sqrt{\sigma} \int_{0}^t dB_s, \quad \forall t \in [0,1].
\end{equation*}
On the one hand, by the It\^o formula and \eqref{eq 24}, for any $ t\in [0,1]$, we have
\begin{align*}
1+\|X_t \|^2_2 &= 1+2 \sigma \int_0^t \left\langle X_s, b(X_s,s) \right\rangle ds + \int_0^t \sigma ds +2\int_0^t \left\langle X_s, \sqrt{\sigma} dB_s \right\rangle  \\
& \le 1+ 2\int_0^t \left( X^{\top}_s b(X_s,s) +\frac{1}{2} \right)ds +2\int_0^t \left\langle X_s, dB_s \right\rangle \\
& \le 1+2\alpha \int_{0}^t \left(1+ \|X_s \|^2_2 \right)ds +2\int_0^t \left\langle X_s, dB_s \right\rangle,
\end{align*}
where $\alpha :=\sqrt{C_{0,\sigma}}+ 1/2$. Furthermore, we have
\begin{equation*}
\mathbb E\left[1 +\|X_t \|^2_2 \right] \le 1+ 2\alpha \int_0^t \mathbb E\left[1+ \|X_s \|^2_2 \right] ds.
\end{equation*}
The Gr\"onwall inequality yields
\begin{equation}\label{eq 25}
\mathbb E \left[\| 1+ X_t \|_2^2 \right] \le \exp\left( 2 \alpha t \right)=\exp\left(2\sqrt{C_{0,\sigma}} +1 \right), \quad \forall t\in [0,1].
\end{equation}
On the other hand, by the elementary inequality $(a+b)^2 \le 2(a^2 + b^2)$ then we get
\begin{equation*}
\mathbb E\left[\|X_{t_2} -X_{t_1} \|^2_2 \right] \le 2 \mathbb E\left[\left(\int_{t_1}^{t_2} \sigma b(X_s,s)  ds \right)^2 \right] +2\mathbb E\left[ \left( \int_{t_1}^{t_2} \sqrt{\sigma} dB_s\right)^2 \right].
\end{equation*}
Thus, using the Cauchy-Schwarz inequality, Burkholder-Davis-Gundy inequality, \eqref{cond1} and \eqref{eq 25}, we deduce that
\begin{align*}
\mathbb E\left[\|X_{t_2} -X_{t_1} \|^2_2 \right] &\le 2\sigma^2(t_2-t_1) \int_{t_1}^{t_2} \mathbb E\left[\|b(X_s,s) \|_2^2 \right]ds +2\sigma(t_2-t_1) \\
& \le 2 C_{0,\sigma} \int_{t_1}^{t_2} \mathbb E\left[1+\|X_s \|^2_2 \right] ds +2(t_2-t_1)  \\
& \le \left\lbrace 2+ 2C_{0,\sigma} \exp(2\sqrt{C_{0,\sigma}}+1)\right\rbrace (t_2-t_1).
\end{align*}
\end{proof}
\begin{lemma}\label{lemma6}
Assume (\textbf{P1}) and (\textbf{P2}) hold, then
\begin{align*}
\underset{x\in\mathbb{R}^d,t\in [0,1]}{\sup} \mathbb{E}\left[\|b(x,t)-\tilde{b}_m(t,x)\|_2^2\right]
\leq \frac{4d}{m} C^{\star}_{2,\sigma},
\end{align*}
where
\begin{equation*}
C^{\star}_{2,\sigma} :=\frac{\gamma^4_{\sigma}}{\xi^4_{\sigma}} +\frac{\gamma_{\sigma}^2 \zeta^2_{\sigma}}{\xi^4_{\sigma}}, \quad\frac{\gamma_{\sigma}}{\xi_{\sigma}}=\left\lbrace \left(\frac{M_{2,R}}{\sigma} \right)^2 +\frac{M_{3,R}}{\sigma} \right \rbrace \exp\left(\frac{M_{1,R}-m_{1,R}}{\sigma} \right),
\end{equation*}
with $\zeta_{\sigma}= \exp\left(M_{1,R}/ \sigma \right)$.
\end{lemma}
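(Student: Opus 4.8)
The plan is to treat $\tilde b_m(x,t)$ as a ratio of two independent-sample averages and to bound its $L^2$ distance to $b(x,t)$ by a first-order perturbation estimate, driven by the pointwise bounds of (\textbf{P1})--(\textbf{P2}). I work with the form \eqref{drifte1} of the estimator, which is cleaner here than \eqref{drifte2} (whose denominator carries the factor $\sqrt{(1-t)\sigma}$, awkward for a bound that must be uniform over $t\in[0,1]$). Fix $x\in\mathbb R^d$ and $t\in[0,1)$ — the case $t=1$ is trivial, since then $\tilde b_m(x,1)=\nabla\hat f_\sigma(x)/\hat f_\sigma(x)=b(x,1)$. Writing $h_j:=\nabla\hat f_\sigma(x+\sqrt{(1-t)\sigma}Z_j)$, $g_j:=\hat f_\sigma(x+\sqrt{(1-t)\sigma}Z_j)$, $\bar h:=\frac1m\sum_{j=1}^m h_j$ and $\bar g:=\frac1m\sum_{j=1}^m g_j$, one has $b(x,t)=\Ebb[h_1]/\Ebb[g_1]$ and $\tilde b_m(x,t)=\bar h/\bar g$, the expectation in the statement being over $Z_1,\dots,Z_m$.

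First I collect pointwise bounds: (\textbf{P1}) gives $\|h_j\|_2\le\gamma_\sigma$ (and $|\partial_i\hat f_\sigma|\le\gamma_\sigma$ coordinatewise), hence $\|\Ebb h_1\|_2\le\gamma_\sigma$; (\textbf{P2}) gives $g_j\ge\xi_\sigma$; and since $\hat f_\sigma(y)=\exp(-V(y)/\sigma+\|y\|_2^2/(2\sigma))$ with $V(y)=\|y\|_2^2/2$ outside $B_R$, one has $g_j\le\zeta_\sigma=\exp(M_{1,R}/\sigma)$, so $\bar g,\Ebb g_1\in[\xi_\sigma,\zeta_\sigma]$. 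Then I use the perturbation identity
\[
\tilde b_m(x,t)-b(x,t)=\frac{\bar h-\Ebb h_1}{\bar g}-\Ebb h_1\cdot\frac{\bar g-\Ebb g_1}{\bar g\,\Ebb g_1},
\]
which, with $\bar g,\Ebb g_1\ge\xi_\sigma$, $\|\Ebb h_1\|_2\le\gamma_\sigma$ and $(a+b)^2\le 2a^2+2b^2$, gives the deterministic inequality $\|\tilde b_m(x,t)-b(x,t)\|_2^2\le\frac{2}{\xi_\sigma^2}\|\bar h-\Ebb h_1\|_2^2+\frac{2\gamma_\sigma^2}{\xi_\sigma^4}|\bar g-\Ebb g_1|^2$.

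It remains to bound the two Monte Carlo fluctuations. By independence the cross terms cancel, so $\Ebb\|\bar h-\Ebb h_1\|_2^2\le\frac1m\Ebb\|h_1\|_2^2$ and $\Ebb|\bar g-\Ebb g_1|^2=\frac1m\mathrm{Var}(g_1)$. Writing $\nabla\hat f_\sigma=\hat f_\sigma\,\nabla\log\hat f_\sigma$ with $\|\nabla\log\hat f_\sigma\|_2=\|\nabla\hat f_\sigma\|_2/\hat f_\sigma\le\gamma_\sigma/\xi_\sigma$ and summing the $d$ coordinate bounds yields $\Ebb\|h_1\|_2^2\le d\,\gamma_\sigma^2\zeta_\sigma^2/\xi_\sigma^2$, while $\mathrm{Var}(g_1)\le\Ebb g_1^2\le\zeta_\sigma^2$ (alternatively the Gaussian Poincar\'e inequality gives $\mathrm{Var}(g_1)\le(1-t)\sigma\,\Ebb\|\nabla\hat f_\sigma\|_2^2\le\gamma_\sigma^2$). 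Substituting and using $2d+2\le 4d$ gives $\Ebb\|\tilde b_m(x,t)-b(x,t)\|_2^2\le\frac{4d}{m}\big(\frac{\gamma_\sigma^4}{\xi_\sigma^4}+\frac{\gamma_\sigma^2\zeta_\sigma^2}{\xi_\sigma^4}\big)=\frac{4d}{m}C^{\star}_{2,\sigma}$, and since no bound involved $x$ or $t$, taking the supremum completes the argument. The only genuinely delicate point is that the Monte Carlo denominator $\bar g$ is random and could a priori be small, which would destroy the perturbation estimate; it is exactly the uniform lower bound $\hat f_\sigma\ge\xi_\sigma$ of (\textbf{P2}) that prevents this and allows control in expectation rather than merely with high probability. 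Everything else is bookkeeping with the constants already in play.
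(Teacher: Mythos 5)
Your argument is correct and reaches the claimed bound, but it takes a genuinely different route from the paper's. The paper's proof bounds $\Ebb\|\bar h-\Ebb h_1\|_2^2$ and $\Ebb|\bar g-\Ebb g_1|^2$ by a symmetrization device: it introduces an independent copy $\bZ'=\{Z_1',\dots,Z_m'\}$, writes $h-h_m=\Ebb[h_m'-h_m\mid\bZ]$, applies Jensen, and then uses the Lipschitz continuity of $\hat f_\sigma$ and $\nabla\hat f_\sigma$ from (\textbf{P1}) so that the bound $\Ebb\|Z_1-Z_1'\|_2^2=2d$ is what produces the $d$ factor. You instead use the plain i.i.d.\ variance identity $\Ebb\|\bar h-\Ebb h_1\|_2^2=\tfrac1m\Ebb\|h_1-\Ebb h_1\|_2^2$ together with the pointwise bounds $\|\nabla\hat f_\sigma\|_2\le\gamma_\sigma$ and $\hat f_\sigma\in[\xi_\sigma,\zeta_\sigma]$, so the $d$ does not arise naturally; you manufacture it by the loosening $\|h_1\|_2^2\le d\,\gamma_\sigma^2\zeta_\sigma^2/\xi_\sigma^2$ (and the slack inequality $2d+2\le 4d$). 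That is harmless since the stated bound only needs to hold from above, but it is worth noting that your route would actually yield a tighter, $d$-free bound of order $\tfrac{2}{m}C^\star_{2,\sigma}$ if you used $\|h_1\|_2\le\gamma_\sigma$ directly. The two ratio decompositions are algebraically equivalent (the paper writes $\|b-\tilde b_m\|_2\le(\|h\|\,|e_m-e|+\|h-h_m\|\,|e|)/|e\,e_m|$, you write the symmetric two-term telescoping identity), and in both cases (\textbf{P2})'s uniform lower bound $\hat f_\sigma\ge\xi_\sigma$ is exactly what controls the random denominator, as you rightly flag. Your Poincar\'e-inequality alternative for $\mathrm{Var}(g_1)$ is a nice extra and recovers a bound of the same order without invoking the Lipschitz constant a second time. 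In short: same constants, different mechanism — the paper's coupling argument attributes the $d$ to the dimension of the Gaussian noise, while yours does not need the Lipschitz property at all for the variance step and treats the $d$ as a cosmetic padding.
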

\begin{proof}
Denote two independent sequences of independent copies of $Z\sim N(0, \bI_d)$,
that is, $\bZ=\{Z_1,\ldots,Z_m\}$ and $\bZ^{\prime}=\{Z_1^{\prime},\ldots,Z_m^{\prime}\}$.
For notation convenience, we also denote
\begin{align*}
& h :=\Ebb_{Z}\left[ \nabla \hat f_{\sigma}(x+\sqrt{(1-t)\sigma} Z)\right],~ h_m :=\frac{\sum_{i=1}^m\nabla \hat f_{\sigma}(x+\sqrt{(1-t)\sigma} Z_i)}{m},\\
& e :=\Ebb_{Z}\left[ \hat f_{\sigma}(x+\sqrt{(1-t)\sigma} Z)\right],~ e_m :=\frac{\sum_{i=1}^m \hat f_{\sigma}(x+\sqrt{(1-t)\sigma} Z_i)}{m},\\
& h_m^{\prime} :=\frac{\sum_{i=1}^m\nabla \hat f_{\sigma}(x+\sqrt{(1-t)\sigma} Z_i^{\prime})}{m},~e_m^{\prime} :=\frac{\sum_{i=1}^m \hat f_{\sigma}(x+\sqrt{(1-t)\sigma} Z_i^{\prime})}{m}.
\end{align*}
Since $h-h_m=\Ebb\left[h_m^{\prime}-h_m|\bZ\right]$, then $\|h-h_m\|^2_2\leq \Ebb\left[\|h_m^{\prime}-h_m\|^2_2|\bZ\right]$. Moreover, we have
\begin{align}\label{mm1}
\Ebb \left[\|h-h_m\|^2\right]
&\leq \Ebb \left\lbrace \Ebb\left[\|h_m^{\prime}-h_m\|^2_2|\bZ\right] \right\rbrace 
=\Ebb\left[\|h_m^{\prime}-h_m\|^2_2\right] \notag\\
&=\frac{\Ebb_{Z_1,Z_1^{\prime}}\left[ \left\|\nabla \hat f_{\sigma}(x+\sqrt{(1-t)\sigma} Z_1)-\nabla \hat  f_{\sigma}(x+\sqrt{(1-t)\sigma} Z_1^{\prime})\right\|^2_2 \right] }{m} \notag\\
&\leq \frac{\sigma(1-t)\gamma_{\sigma}^{2}}{m}\Ebb_{Z_1,Z_1^{\prime}}\left[ \left\|Z_1-Z_1^{\prime}\right\|^2_2\right] \notag\\
&\leq \frac{2d\gamma_{\sigma}^2}{m},
\end{align}
where the second inequality holds by (\textbf{P1}) and the last inequality follows from the fact that $Z_1$ and $Z'_1$ are independent standard normal distribution.
Similarly, we also have
\begin{align}\label{mm2}
\Ebb\left[ |e-e_m|^2\right] 
&\leq \Ebb \left[|e_m^{\prime}-e_m|^2\right] \notag\\
&=\frac{\Ebb_{Z_1,Z_1^{\prime}} \left[
\left|\hat f_{\sigma}(x+\sqrt{(1-t)\sigma} Z_1)-\hat f_{\sigma}(x+\sqrt{(1-t) \sigma} Z_1^{\prime})\right|^2 \right]}{m}\ \notag\\
&\leq \frac{\sigma(1-t)\gamma_{\sigma}^2}{m}\Ebb_{Z_1,Z_1^{\prime}}\left[ \left\|Z_1-Z_1^{\prime}\right\|^2_2\right] \notag\\
&\leq \frac{2d\gamma_{\sigma}^2}{m},
\end{align}
where the second inequality holds due to (\textbf{P1}). Thus, by \eqref{mm1} and \eqref{mm2}, it follows that
\begin{align}\label{mm3}
\underset{x\in \mathbb{R}^d,t\in [0,1]}{\sup}\Ebb \left[ \left\|h-h_m\right\|_2^2\right] 
\leq \frac{2d\gamma_{\sigma}^{2}}{m},
\end{align}
\begin{align}\label{mm4}
\underset{x\in \mathbb{R}^d,t\in [0,1]}{\sup}\Ebb \left[ |e-e_m|^2\right] \leq \frac{2d\gamma_{\sigma}^2}{m}.
\end{align}
Then, by (\textbf{P1}) and (\textbf{P2}),
some simple calculations yield that
\begin{align}\label{mm5}
\|b(x,t)-\tilde{b}_m(x,t)\|_2
&=\left\|\frac{h}{e}-\frac{h_m}{e_m}\right\|_2\notag\\
&\leq \frac{\|h\|_2|e_m-e|+\|h-h_m\|_2|e|}{|ee_m|}\notag\\
&\leq \frac{\gamma_{\sigma} |e_m-e|+\|h-h_m\|_2|e|}{\xi_{\sigma}^2}.
\end{align}
Recall that $\zeta_{\sigma} = \exp(M_{1,R} /\sigma)$ with
\begin{equation*} 
\quad M_{1,R} :=\max_{\|x \|_2 \le R} \left \lbrace -V(x) +\frac{\|x \|^2_2}{2} \right \rbrace,
\end{equation*}
then $\hat f_{\sigma}(x) \leq \zeta_{\sigma}$ for any $x \in B_R$.
Further, by \eqref{mm5}, it follows that for all $x\in \mathbb{R}^d$ and $t\in[0,1]$,
\begin{align}\label{mm55}
\|b(x,t)-\tilde{b}_m(x,t)\|_2^2
&\leq  2\frac{\gamma_{\sigma}^2 |e_m-e|^2+\zeta_{\sigma}^2 \|h-h_m\|_2^2}{\xi_{\sigma}^4}.
\end{align}
Then, combining \eqref{mm3}-\eqref{mm4} and \eqref{mm55}, it follows that
\begin{align*}
\underset{x\in\mathbb{R}^d,t\in [0,1]}{\sup} \mathbb{E}\left[\|b(x,t)-\tilde{b}_m(t,x)\|_2^2\right]
\leq \frac{4d}{m} C^{\star}_{2,\sigma},
\end{align*}
where
\begin{equation*}
C^{\star}_{2,\sigma} :=\frac{\gamma^4_{\sigma}}{\xi^4_{\sigma}} +\frac{\gamma_{\sigma}^2 \zeta^2_{\sigma}}{\xi^4_{\sigma}}, \quad\frac{\gamma_{\sigma}}{\xi_{\sigma}}=\left\lbrace \left(\frac{M_{2,R}}{\sigma} \right)^2 +\frac{M_{3,R}}{\sigma} \right \rbrace \exp\left(\frac{M_{1,R}-m_{1,R}}{\sigma} \right), \quad \zeta_{\sigma}= \exp\left( \frac{M_{1,R}}{\sigma} \right).
\end{equation*}
\end{proof}
\begin{lemma}\label{lemma5}
Assume (\textbf{P1}) and (\textbf{P2}) hold,
then for any $k=0,1,\ldots,K$,
\begin{align*}
\mathbb E \left[\|\widetilde{Y}_{t_{k}}\|^2_2 \right]
\leq  \frac{6\gamma_{\sigma}^2}{\xi_{\sigma}^2}+3d.
\end{align*}
\end{lemma}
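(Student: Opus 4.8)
The plan is to bound the second moment of the Euler--Maruyama iterates directly from the explicit form of the recursion. Unrolling
\[
\widetilde Y_{t_{k+1}} = \widetilde Y_{t_k} + \sigma s\,\tilde b_m(\widetilde Y_{t_k},t_k) + \sqrt{\sigma s}\,\epsilon_{k+1},
\qquad \widetilde Y_{t_0}=0,
\]
gives $\widetilde Y_{t_k} = \sigma s\sum_{j=0}^{k-1}\tilde b_m(\widetilde Y_{t_j},t_j) + \sqrt{\sigma s}\sum_{j=1}^{k}\epsilon_j$. The first ingredient I would establish is the pointwise bound $\sup_{x,t}\|\tilde b_m(x,t)\|_2\le \gamma_\sigma/\xi_\sigma$: viewing $\tilde b_m$ as the ratio in \eqref{drifte1}, the numerator is an average of vectors $\nabla\hat f_\sigma(\cdot)$ each of norm at most $\gamma_\sigma$ by $(\textbf{P1})$, while the denominator is an average of values $\hat f_\sigma(\cdot)\ge\xi_\sigma$ by $(\textbf{P2})$; hence the ratio is bounded by $\gamma_\sigma/\xi_\sigma$, deterministically and uniformly in the Monte Carlo points $Z_1,\dots,Z_m$ (the same estimate applies to the form \eqref{drifte2} via Stein's identity).

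Since $\sigma\in(0,1]$, $s=1/K$ and $sk=t_k\le 1$, the drift part of $\widetilde Y_{t_k}$ then has norm at most $\sigma s k\,(\gamma_\sigma/\xi_\sigma)\le \gamma_\sigma/\xi_\sigma$, so by the triangle inequality and $(a+b)^2\le 2a^2+2b^2$,
\[
\|\widetilde Y_{t_k}\|_2^2 \le 2\frac{\gamma_\sigma^2}{\xi_\sigma^2} + 2\sigma s\Big\|\sum_{j=1}^k\epsilon_j\Big\|_2^2 .
\]
Taking expectations and using that $\epsilon_1,\dots,\epsilon_k$ are i.i.d.\ $N(0,\bI_d)$, so $\mathbb E\big\|\sum_{j=1}^k\epsilon_j\big\|_2^2 = kd$, together with $2\sigma s k d = 2\sigma t_k d\le 2d$, yields $\mathbb E\|\widetilde Y_{t_k}\|_2^2\le 2\gamma_\sigma^2/\xi_\sigma^2 + 2d$, which is stronger than, hence implies, the asserted bound $6\gamma_\sigma^2/\xi_\sigma^2 + 3d$. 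Alternatively, one can run a discrete Gr\"onwall argument on $a_k:=\mathbb E\|\widetilde Y_{t_k}\|_2^2$: conditioning on the $\sigma$-field $\mathcal F_{t_k}$ generated by $\widetilde Y_{t_k}$ and the step-$k$ Monte Carlo sample makes $\tilde b_m(\widetilde Y_{t_k},t_k)$ measurable and leaves $\epsilon_{k+1}$ independent and mean-zero, so $\mathbb E[\|\widetilde Y_{t_{k+1}}\|_2^2\mid\mathcal F_{t_k}] = \|\widetilde Y_{t_k}+\sigma s\,\tilde b_m(\widetilde Y_{t_k},t_k)\|_2^2 + \sigma s d$; bounding the square via $2\langle u,v\rangle\le\|u\|_2^2+\|v\|_2^2$ gives $a_{k+1}\le (1+\sigma s)a_k + \sigma s(2\gamma_\sigma^2/\xi_\sigma^2 + d)$, and iterating with $(1+\sigma s)^k\le e^{\sigma s k}\le e$ again produces a bound within $6\gamma_\sigma^2/\xi_\sigma^2 + 3d$.

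There is no genuinely hard step here. The one point requiring care is that it is the Monte Carlo estimator $\tilde b_m$, not the true drift $b$, that enters the recursion, so its uniform bound must be derived by applying $(\textbf{P1})$--$(\textbf{P2})$ termwise inside the empirical ratio rather than by quoting the already recorded bound $\|b\|_2\le\gamma_\sigma/\xi_\sigma$. One should also state the independence structure precisely — the increment $\epsilon_{k+1}$ used at iteration $k$ is drawn independently of $\widetilde Y_{t_k}$ and of the fresh samples $Z_1,\dots,Z_m$ forming $\tilde b_m(\widetilde Y_{t_k},t_k)$ — so that the cross terms drop upon taking expectations (and, in the Gr\"onwall version, the conditioning step is valid).
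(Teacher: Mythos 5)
Your main argument is correct, and it is genuinely different from — and somewhat cleaner than — the paper's. The paper defines $\Theta_{k,t}=\widetilde Y_{t_k}+\sigma(t-t_k)\tilde b_m(\widetilde Y_{t_k},t_k)$, derives a one-step conditional second-moment recursion $\mathbb E[\|\widetilde Y_{t_{k+1}}\|_2^2]\le (1+s\sigma)\mathbb E[\|\widetilde Y_{t_k}\|_2^2]+\sigma s d + (s\sigma+s^2\sigma^2)\gamma_\sigma^2/\xi_\sigma^2$, and then iterates (a discrete Gr\"onwall step) using $(1+s\sigma)^K\le e$. Your secondary sketch is essentially this. Your primary route instead unrolls the recursion to $\widetilde Y_{t_k}=\sigma s\sum_{j<k}\tilde b_m(\widetilde Y_{t_j},t_j)+\sqrt{\sigma s}\sum_{j\le k}\epsilon_j$, bounds the drift sum deterministically by $\sigma s k\,\gamma_\sigma/\xi_\sigma\le \gamma_\sigma/\xi_\sigma$ (no expectation needed, so the dependence of $\tilde b_m(\widetilde Y_{t_j},t_j)$ on earlier increments is irrelevant), and evaluates $\mathbb E\|\sum_{j\le k}\epsilon_j\|_2^2=kd$ directly. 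This avoids the induction/Gr\"onwall machinery entirely and yields the tighter bound $2\gamma_\sigma^2/\xi_\sigma^2+2d$, which of course implies $6\gamma_\sigma^2/\xi_\sigma^2+3d$. You are also right that the essential ingredient is the uniform bound $\|\tilde b_m\|_2\le\gamma_\sigma/\xi_\sigma$ on the \emph{empirical} ratio, obtained termwise from $(\textbf{P1})$--$(\textbf{P2})$ applied inside the Monte Carlo averages, rather than the bound on the true drift $b$.

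One small inaccuracy worth flagging: your parenthetical that the same uniform bound for $\tilde b_m$ holds for the form \eqref{drifte2} ``via Stein's identity'' is not correct. Stein's identity equates the two \emph{expectations}, not the empirical averages at finite $m$; the numerator of \eqref{drifte2} contains factors $Z_j$, so the termwise argument does not give a deterministic bound by $\gamma_\sigma/\xi_\sigma$ for that variant. (The paper's \eqref{eq2} quietly has the same issue, so this is really a remark about both texts.) Your main derivation is unaffected since it rests on the form \eqref{drifte1}, where the bound is clean.
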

\begin{proof}
Define
$\Theta_{k,t} :=\widetilde{Y}_{t_k}+ \sigma (t-t_k)\tilde{b}_m(\widetilde{Y}_{t_k},t_k)$, hence, we get $\widetilde{Y}_{t}=\Theta_{k,t}+\sqrt{\sigma}(B_t-B_{t_k})$, where $t_k \leq t \leq t_{k+1}$ with $k=0,1,\ldots,K-1$. By (\textbf{P1}) and (\textbf{P2}), it follows that for all $x \in \mathbb{R}^d$ and $t\in[0,1]$,
\begin{align}\label{eq2}
\|b(x,t)\|_2^2\leq \frac{\gamma_{\sigma}^2}{\xi_{\sigma}^2},~~ \|\tilde{b}_m(x,t)\|_2^2 \leq \frac{\gamma_{\sigma}^2}{\xi^2_{\sigma}}.
\end{align}
Then, by \eqref{eq2} and $s=1/K$, we have
\begin{align*}
\|\Theta_{k,t}\|^2_2&=\|\widetilde{Y}_{t_k}\|^2_2+\sigma^2 (t-t_k)^2\|\tilde{b}_m(\widetilde{Y}_{t_k},t_k)\|^2_2
+2\sigma (t-t_k) \left< \widetilde{Y}_{t_k}, \tilde{b}_m(\widetilde{Y}_{t_k},t_k) \right> \\
&\leq (1+s\sigma)\|\widetilde{Y}_{t_k}\|^2_2+\frac{\gamma_{\sigma}^2 (s\sigma+s^2\sigma^2)}{\xi^2_{\sigma}}.
\end{align*}
Further, we can get
\begin{align*}
\Ebb \left[ \| \left. \widetilde{Y}_{t}\|^2_2 \right| \widetilde{Y}_{t_k} \right] &=\Ebb \left[ \| \left. \Theta_{k,t}\|^2_2 \right| \widetilde{Y}_{t_k}\right]  + \sigma (t-t_k)d\\
& \leq(1+\sigma s)\|\widetilde{Y}_{t_k}\|^2_2+\frac{(s\sigma +s^2 \sigma^2)\gamma_{\sigma}^2}{\xi^2_{\sigma}}+ \sigma sd.
\end{align*}
Therefore,
\begin{align*}
\Ebb \left[ \|\widetilde{Y}_{t_{k+1}}\|^2_2\right]  \leq (1+s\sigma)\Ebb \left[ \|\widetilde{Y}_{t_k}\|^2_2\right] +\frac{(s\sigma+s^2 \sigma^2)\gamma_{\sigma}^2}{\xi^2_{\sigma}}+\sigma sd.
\end{align*}
Since $\widetilde{Y}_{t_0}=0$, then by induction, we have
\begin{align*}
\Ebb \left[ \|\widetilde{Y}_{t_{k+1}}\|^2_2 \right] 
\leq e^{(k+1)s\sigma} \left( d+ \frac{(1+s\sigma)\gamma_{\sigma}^2}{\xi^2_{\sigma}} \right) \le e \left(d+ \frac{2\gamma_{\sigma}^2}{\xi^2_{\sigma}} \right) \le \frac{6\gamma_{\sigma}^2}{\xi^2_{\sigma}}+3d.
\end{align*}
\end{proof}
\subsection{Proof of Theorem \ref{th3}}
\begin{proof}
From the definitions of $\widetilde{Y}_{t_k}$ and $X_{t_k}$, we have
\begin{align*}
\|\widetilde{Y}_{t_k}-X_{t_k}\|_2^2 &\leq \|\widetilde{Y}_{t_{k-1}}-X_{t_{k-1}}\|_2^2
+\left(\int_{t_{k-1}}^{t_k}\sigma \|b(X_u,u)-\tilde{b}_m(\widetilde{Y}_{t_{k-1}},t_{k-1})\|_2\mathrm{d} u\right)^2\\
& \qquad +2\sigma \|\widetilde{Y}_{t_{k-1}}-X_{t_{k-1}}\|_2
\left(\int_{t_{k-1}}^{t_k}\|b(X_u,u)-\tilde{b}_m(\widetilde{Y}_{t_{k-1}},t_{k-1})\|_2\mathrm{d} u\right)\\
&\leq (1+s) \|\widetilde{Y}_{t_{k-1}}-X_{t_{k-1}}\|_2^2
+(1+s)\int_{t_{k-1}}^{t_k}\|b(X_u,u)-\tilde{b}_m(\widetilde{Y}_{t_{k-1}},t_{k-1})\|_2^2\mathrm{d} u\\
&\leq (1+s) \|\widetilde{Y}_{t_{k-1}}-X_{t_{k-1}}\|_2^2
+2(1+s)\int_{t_{k-1}}^{t_k}\|b(X_u,u)-b(\widetilde{Y}_{t_{k-1}},t_{k-1})\|_2^2\mathrm{d} u\\
& \qquad  +2s(1+s)\|b(\widetilde{Y}_{t_{k-1}},t_{k-1})-\tilde{b}_m(\widetilde{Y}_{t_{k-1}},t_{k-1})\|_2^2\\
&\leq (1+s) \|\widetilde{Y}_{t_{k-1}}-X_{t_{k-1}}\|_2^2+4C_{2,\sigma}^2(1+s)\int_{t_{k-1}}^{t_k} \left[ \|X_u-\widetilde{Y}_{t_{k-1}}\|_2^2+|u-t_{k-1}|\right] \mathrm{d} u\\
& \qquad +2s(1+s)\|b(\widetilde{Y}_{t_{k-1}},t_{k-1})-\tilde{b}_m(\widetilde{Y}_{t_{k-1}},t_{k-1})\|_2^2\\
&\leq (1+s) \|\widetilde{Y}_{t_{k-1}}-X_{t_{k-1}}\|_2^2
+8C_{2,\sigma}^2(1+s)\int_{t_{k-1}}^{t_k}\|X_u-X_{t_{k-1}}\|_2^2\mathrm{d} u\\
& \qquad +8C_{2,\sigma}^2s(1+s)\|X_{t_{k-1}}-\widetilde{Y}_{t_{k-1}}\|_2^2+4C_{2,\sigma}^2 (1+s)s^2\\
& \qquad  +2s(1+s)\|b(\widetilde{Y}_{t_{k-1}},t_{k-1})-\tilde{b}_m(\widetilde{Y}_{t_{k-1}},t_{k-1})\|_2^2\\
&\leq (1+s+8C_{2,\sigma}^2(s+s^2))\|\widetilde{Y}_{t_{k-1}}-X_{t_{k-1}}\|_2^2
+8C_{2,\sigma}^2(1+s)\int_{t_{k-1}}^{t_k}\|X_u-X_{t_{k-1}}\|_2^2\mathrm{d} u\\
& \qquad +4C_{2,\sigma}^2(1+s)s^2+2s(1+s)\|b(\widetilde{Y}_{t_{k-1}},t_{k-1})-\tilde{b}_m(\widetilde{Y}_{t_{k-1}},t_{k-1})\|_2^2,
\end{align*}
where the second inequality holds due to $2ac \leq s a^2+ c^2 /s$ for $s=1/K$ and the fourth inequality holds by condition \eqref{cond3}. Then, we obtain
\begin{align*}
 &\Ebb\left[ \|\widetilde{Y}_{t_k}-X_{t_k}\|_2^2\right] \notag\\
 &\le (1+s+8C_{2,\sigma}^2(s+s^2)) \Ebb \left[ \|\widetilde{Y}_{t_{k-1}}-X_{t_{k-1}}\|_2^2\right]  +8C_{2,\sigma}^2(1+s)\int_{t_{k-1}}^{t_k}\Ebb \left[ \|X_u-X_{t_{k-1}}\|_2^2\right]  \mathrm{d} u \notag\\
&~~~ +4C_{2,\sigma}^2(s^2+s^3) +2s(1+s)\Ebb \left[ \|b(\widetilde{Y}_{t_{k-1}},t_{k-1})-\tilde{b}_m(\widetilde{Y}_{t_{k-1}},t_{k-1})\|_2^2\right] \notag\\
&\le (1+s+8C_{2,\sigma}^2(s+s^2)) \Ebb \left[ \|\widetilde{Y}_{t_{k-1}}-X_{t_{k-1}}\|_2^2\right]  +H(s,\sigma) +4C_{2,\sigma}^2(s^2+s^3) \notag \\
&~~~+2s(1+s)\Ebb\left[ \|b(\widetilde{Y}_{t_{k-1}},t_{k-1})-\tilde{b}_m(\widetilde{Y}_{t_{k-1}},t_{k-1})\|_2^2\right] \notag\\
&\le(1+s+8C_{2,\sigma}^2(s+s^2)) \Ebb\left[ \|\widetilde{Y}_{t_{k-1}}-X_{t_{k-1}}\|_2^2\right]  +H(s,\sigma)+ \frac{8s(1+s)d}{m}C^{\star}_{2,\sigma},
\end{align*}
where $H(s,\sigma) :=16 C_{2,\sigma}^2(s^2+ s^3)\left\lbrace 1 + C_{0,\sigma} \exp(2\sqrt{C_{0,\sigma}}+1)\right\rbrace +4C^{2}_{2,\sigma} (s^2 +s^3)$ follows from Lemma \ref{lemma3}, and the last inequality holds by Lemma \ref{lemma6}. Owing to $\widetilde{Y}_{t_0}=X_{t_0}=0$, we can conclude that there exists a  constant 
$C_{\sigma} >0$
such that
\begin{align*}
\Ebb \left[ \|\widetilde{Y}_{t_K}-X_{t_K}\|_2^2\right]  &\leq\frac{(1+s+8C_{2,\sigma}^2(s+s^2))^K-1}{s+8C_{2,\sigma}^2(s+s^2)}
\left[H(s,\sigma) +\frac{8s(1+s)d}{m}C^{\star}_{2,\sigma} \right]\\
& \le  C_{\sigma} \left(s C^{\star}_{3,\sigma} + \frac{16d}{m} C^{\star}_{2,\sigma} \right),
\end{align*}
where
\begin{align*}
 C_{\sigma} :=\exp(1+16C_{2,\sigma}^2), \quad C^{\star}_{3,\sigma} :=40 C^2_{2,\sigma} + 32C^2_{2,\sigma}C_{0,\sigma} \exp(2\sqrt{C_{0,\sigma}}+1).
\end{align*}
Therefore,
\begin{align*}
W^2_2(\text{Law}(\widetilde{Y}_{t_K}),\mu_{\sigma})
\leq C_{\sigma} \left ( s C^{\star}_{3,\sigma} +  \frac{16d}{m} C^{\star}_{2,\sigma} \right ).
\end{align*}
It completes the proof.
\end{proof}

%


\bibliographystyle{spbasic}  
\bibliography{ref_bib}   

\end{document}